\newtheorem{theorem}{Theorem}
\newtheorem{assumption}{Assumption}
\newtheorem{proposition}[theorem]{Proposition}
\newtheorem{lemma}[theorem]{Lemma}
\newtheorem{remark}[theorem]{Remark}
\newtheorem{corollary}[theorem]{Corollary}
\newcommand{\E}{{\mathbb E}}
\newcommand{\N}{{\mathbb N}}
\renewcommand{\P}{{\mathbb P}}
\newcommand{\R}{{\mathbb R}}
\newcommand{\ind}{{\bf 1}}
\def\eg{\textit{e.g.} }
\def\ie{\textit{i.e.} }
\def\Cov{\mbox{Cov}}
\def\Var{\mbox{Var}}
\title{Respondent-driven sampling on sparse Erd\"os-R\'enyi graphs}
\author{
Anthony Cousien\thanks{Universit\'e de Paris, IAME, INSERM, F-75018 Paris, France; E-mail: anthony.cousien@inserm.fr}, \quad
Jean-Stéphane Dhersin\thanks{Jean-Stéphane Dhersin, Univ. Paris 13, CNRS, UMR 7539 - LAGA, 99 avenue J.-B. Cl\'ement, F-93430 Villetaneuse, France; E-mail: dhersin@math.univ-paris13.fr},\quad
Viet Chi Tran\thanks{Tran Viet Chi, LAMA, Univ Gustave Eiffel, Univ Paris Est Creteil, CNRS, F-77454 Marne-la-Vall\'ee, France;
		E-mail: chi.tran@univ-eiffel.fr},\quad
Thi Phuong Thuy Vo\thanks{Vo Thi Phuong Thuy, Univ. Paris 13, CNRS, UMR 7539 - LAGA, 99 avenue J.-B. Cl\'ement, F-93430 Villetaneuse, France; E-mail: phuongthuywz@gmail.com}}
\date{\today}
\begin{document}
	
	\maketitle
	
	\begin{abstract}
We study the exploration of an Erd\"os-R\'enyi random graph by a respondent-driven sampling method, where discovered vertices reveal their neighbours. Some of them receive coupons to reveal in their turn their own neighbourhood. This leads to the study of a Markov chain on the random graph that we study. For sparse Erd\"os-R\'enyi graphs of large sizes, this process correctly renormalized converges to the solution of a deterministic curve, solution of a system of ODEs absorbed on the abscissa axis. The associated fluctuation process is also studied, providing a functional central limit theorem, with a Gaussian limiting process. Simulations and numerical computation illustrate the study.
	\end{abstract}
	
	\noindent Keywords: random graph; random walk exploration; respondent driven sampling; chain-referral survey.\\
	\noindent
	AMS Classification:  	62D05; 05C81; 05C80; 60F17; 60J20\\
	
	\noindent \textbf{Acknowledgements: }This work was partially funded by the French Agence Nationale de Recherche sur le Sida et les H\'epatites virales (ANRS, http://www.anrs.fr), grant number 95146. V.C.T. and T.P.T.V. have been supported by the GdR GeoSto 3477, ANR Econet (ANR-18-CE02-0010) and by the Chair ``Mod\'elisation Math\'ematique et Biodiversit\'e" of Veolia Environnement-Ecole Polytechnique-Museum National d'Histoire Naturelle-Fondation X. V.C.T. and T.P.T.V. acknowledge support from Labex B\'ezout (ANR-10-LABX-58). The authors would like to thank the working group previously involved in the development of the model for HCV transmission among PWID: Sylvie Deuffic-Burban, Marie Jauffret-Roustide and Yazdan Yazdanpanah.
	
	\section{Introduction}

Discovering the topology of social networks for hard to reach populations like people who inject drugs (PWID) or men who have sex with men (MSM) may be of primary importance for modeling the spread of diseases such as AIDS or HCV in view of public health issues for instance. We refer to \cite{frostetal,ballbrittonlaredopardouxsirltran,clemenconarazozarossitran,robineau1,robineau2} for AIDS or to \cite{cousien_review,cousien1,rolls2014} for HCV, for example. To achieve this in cases where the populations are hidden, it is possible to use chain-referral sampling methods, where respondents recruit their peers \cite{goodman,heckathorn1,mouwverdery}.
These methods are commonly used in epidemiological or sociological survey to recruit hard to reach populations: the interviewees (or ego) are asked about their contacts (alters), where the term ``contact'' depends on the study population (injection partners for PWID, sexual partners for MSM ...) and some among the latter are recruited for further interviews. In one of the variant, Respondent Driven Sampling (RDS, see \cite{heckathorn1,volzhecathorn,gile2011,handcockgilemar,lirohe,crawfordwuheimer}), an initial set of individuals are recruited in the population (with possible rules) and each of them is given a certain number of coupons. The coupons are distributed by recruited individuals to their contacts. The latter come to take an interview and receive in turn coupons to distribute etc. The information of who recruited whom is kept, which, in combination with the knowledge of the degree of each individual, allows to re-weight the obtained sample to compensate for the fact that the sample was not collected in a completely random way. A tree connecting egos and their alters can be produced from the coupons. Additionally, it is also possible to investigate for the contacts between alters - which is a less reliable information since obtained from the ego and not the alters themselves. This provides a network that is not necessarily a tree, with cycles, triangles etc. For PWID populations in Melbourne, Rolls et al. \cite{rollsplosone,rolls2013} have carried such studies to describe the network of PWID who inject together. The results and the impacts from a health care point of view on Hepatitis C transmission and treatment as prevention are then studied. A similar study on French data is currently in progress \cite{jauffretroustide-enquete}.\\

We consider here a population of fixed size $N$ that is structured by a social static random network $G=(V,E)$, where the set $V$ of vertices represents the individuals in the population and $E \subset V^2$ is the set of non-oriented edges \ie the set of couple of vertices that are in contact. Although the graph is non-oriented, the two vertices of an edge play different roles as the RDS process spreads on the graph.\\
At the beginning, there is one individual chosen and interviewed. He or she names their contacts and then receives a maximum of $c$ coupons, depending on the number of their contacts and the number of the remaining coupons to be distributed. If the degree $D$ of the individual is larger than $c$, $c$ coupons are distributed uniformly at random to $c$ people among these $D$ contacts. But when $D<c$, only $D$ coupons are distributed. We assume here that there is no restriction on the total number of coupons. In the classical RDS, the interviewee chooses among their contacts $c$ people (who have not yet participated to the study) to whom the coupons are distributed. When the latter come with the coupons, they are in turn interviewed. Each person returning a coupon receives some money, as well as the person who distributed the coupons and depending on how many of the coupons he or she distributed were returned. \\	
To the RDS we can associate a random graph where we attach to each vertex the contacts to whom they has distributed coupons. This tree is embedded into the graph that we would like to explore and which is unknown. Additionally, we have some edges obtained from the direct exploration of the interviewees' neighborhood. This enrich the tree defined by the coupon into a subgraph (not necessarily a tree any more) of the graph of interest. Here we do not consider the information obtained from an interviewee between their alters.

\paragraph{RDS exploration process} We would like first to investigate the proportion of the whole graph discovered by the RDS process. Thus, let us first define the RDS process describing the exploration of the graph. We sum up the exploration process by considering only sizes of --partially-- explored components. We thus introduce the process:\\
	\begin{equation}\label{def:X}
	X_n=(A_n,B_n) \in \{0,\dots N\}^2,\quad n\in \N.
	\end{equation}The discrete time $n$ is the number of interviews completed, $A_n$ corresponds to the number of individuals that have received coupons but that have not been interviewed yet, $B_n$ to the number of individuals cited in interviews but who have not been given any coupon. We set $X_0=(A_0,B_0)$: $A_0 >1$ individual is recruited randomly in the population and we assume that the random graph is unknown at the beginning of the study. The random network is progressively discovered when the RDS process explores it. At time $n\in \N$, the number of unexplored vertices is $N-(n+A_n+B_n)$.\\
	
		\begin{figure}[h!]
		\centering
		\begin{tabular}{|c c |c|}
			\hline
			\begin{tikzpicture}[line cap=round,line join=round,x=0.7cm,y=0.7cm]
			\draw[blue,fill] (0,0) circle (2.5pt);
			\draw[] (0.5,2)circle (2.5pt); \draw[] (2.5,0)circle (2.5pt); \draw[] (1.5,-1.2)circle (2.5pt); \draw[] (4,2.2)circle (2.5pt); \draw[] (5,1)circle (2.5pt); \draw[] (5,-1)circle (2.5pt); \draw[] (4,-2.2)circle (2.5pt); \draw[] (7,0.5)circle (2.5pt);
			\draw[] (6,2.3)circle (2.5pt);
			\draw[] (6.5,-1.2)circle (2.5pt);
			\draw[] (1.5,1.5)circle (2.5pt);
			\draw[] (0.5,-2)circle (2.5pt);
			\end{tikzpicture}
			&
			&
			\begin{tikzpicture}[line cap=round,line join=round,x=0.7cm,y=0.7cm]
			\draw[] (4,2.2)circle (2.5pt); \draw[] (5,1)circle (2.5pt); \draw[] (5,-1)circle (2.5pt); \draw[] (4,-2.2)circle (2.5pt); \draw[] (7,0.5)circle (2.5pt);
			\draw[] (6,2.3)circle (2.5pt);
			\draw[] (6.5,-1.2)circle (2.5pt);
			\draw[] (1.5,1.5)circle (2.5pt);
			\draw[] (0.5,-2)circle (2.5pt);
			
			\draw [red,fill] (0,0) circle (2.5pt);
			\draw[blue,fill] (0.5,2)circle (2.5pt); \draw[blue,fill] (2.5,0)circle (2.5pt); \draw[gray,fill] (1.5,-1.2)circle (2.5pt);
			\draw[line width=1.2pt] (0,0) -- (0.5,2); \draw[line width=1.2pt] (0,0) -- (2.5,0); \draw[line width=1.2pt] (0,0) -- (1.5,-1.2);
			\end{tikzpicture}
			\\
			& & \\
			Step 0 & & Step 1\\
			\hline
			& & \\
			\begin{tikzpicture}[line cap=round,line join=round,x=0.7cm,y=0.7cm]
			\draw[] (4,2.2)circle (2.5pt); \draw[] (5,1)circle (2.5pt); \draw[] (5,-1)circle (2.5pt); \draw[] (4,-2.2)circle (2.5pt); \draw[] (7,0.5)circle (2.5pt);
			\draw[] (6,2.3)circle (2.5pt);
			\draw[] (6.5,-1.2)circle (2.5pt);
			\draw[] (1.5,1.5)circle (2.5pt);
			\draw[] (0.5,-2)circle (2.5pt);
			
			\draw [red,fill] (0,0) circle (2.5pt);
			\draw[blue,fill] (0.5,2)circle (2.5pt);  \draw[gray,fill] (1.5,-1.2)circle (2.5pt);
			\draw[line width=1.2pt] (0,0) -- (0.5,2); \draw[line width=1.2pt] (0,0) -- (2.5,0); \draw[line width=1.2pt] (0,0) -- (1.5,-1.2);
			
			\draw[red,fill] (2.5,0) circle (2.5pt);  \draw[gray,fill] (4,2.2)circle (2.5pt); \draw[blue,fill] (5,1)circle (2.5pt); \draw[gray,fill] (5,-1)circle (2.5pt); \draw[blue,fill] (4,-2.2)circle (2.5pt);
			\draw[line width=1.2pt] (2.5,0) -- (4,2.2);
			\draw[line width=1.2pt] (2.5,0) -- (5,1);
			\draw[line width=1.2pt] (2.5,0) -- (5,-1);
			\draw[line width=1.2pt] (2.5,0) -- (4,-2.2);
			\end{tikzpicture}
			& &
			\begin{tikzpicture}
			[line cap=round,line join=round,x=0.7cm,y=0.7cm]
			\draw[] (4,2.2)circle (2.5pt); \draw[] (5,1)circle (2.5pt); \draw[] (4,-2.2)circle (2.5pt);
			\draw[] (6,2.3)circle (2.5pt);
			\draw[] (6.5,-1.2)circle (2.5pt);
			\draw[] (1.5,1.5)circle (2.5pt);
			\draw[] (0.5,-2)circle (2.5pt);
			
			\draw [red,fill] (0,0) circle (2.5pt);
			\draw[blue,fill] (0.5,2)circle (2.5pt);  \draw[gray,fill] (1.5,-1.2)circle (2.5pt);
			\draw[line width=1.2pt] (0,0) -- (0.5,2); \draw[line width=1.2pt] (0,0) -- (2.5,0); \draw[line width=1.2pt] (0,0) -- (1.5,-1.2);
			
			\draw[red,fill] (2.5,0) circle (2.5pt);  \draw[gray,fill] (4,2.2)circle (2.5pt); \draw[blue,fill] (4,-2.2)circle (2.5pt);
			\draw[line width=1.2pt] (2.5,0) -- (4,2.2);
			\draw[line width=1.2pt] (2.5,0) -- (5,1);
			\draw[line width=1.2pt] (2.5,0) -- (5,-1);
			\draw[line width=1.2pt] (2.5,0) -- (4,-2.2);
			
			\draw[red,fill] (5,1)circle (2.5pt);
			\draw[blue,fill] (7,0.5)circle (2.5pt);
			\draw[blue,fill] (5,-1)circle (2.5pt);
			\draw[line width=1.2pt] (5,1) -- (7,0.5);
			\draw[line width=1.2pt] (5,1) -- (5,-1);
			\end{tikzpicture}\\
			& & \\
			Step 2 & & Step 3\\
			\hline
		\end{tabular}
		\begin{tikzpicture}
		\draw[red,fill] (0.5,-0.5) circle (2.5pt) node [right] {\quad off-mode node (who has been interviewed)};
		\draw[blue,fill] (0.5,-1) circle (2.5pt) node [right] {\quad active node (who has coupon but has not been interviewed yet)};
		\draw[gray,fill] (0.5,-1.5) circle (2.5pt) node [right] {\quad explored but still inactive node (who has been named but did not receive coupons)};
		\end{tikzpicture}
		\caption{Description of how the chain-referral sampling works. In our model, the random network and the RDS are constructed simultaneously. For example at step 3, an edge between two vertices who are already known at step 2 is revealed.} \label{fig:chap1description}
	\end{figure}
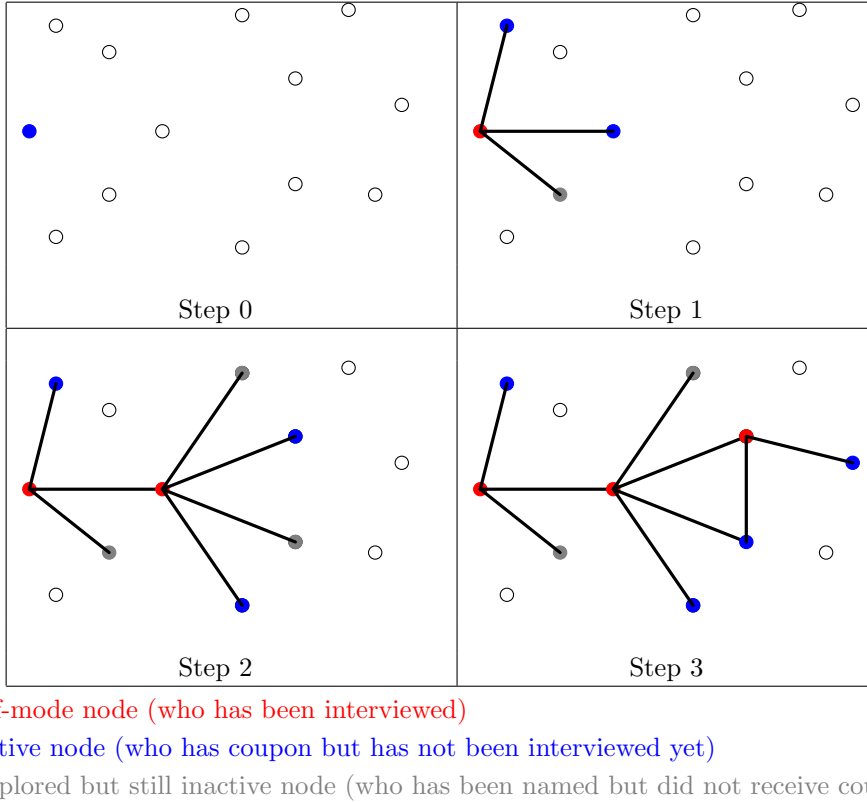
	
Let us describe the dynamics of $X=(X_n)_{n\in \N}$. At the time $n+1$, if $A_n > 0$, one individual among these $A_n$ people with coupons is interviewed and is given a maximum of $c$ coupons that he/she would distributed to his/her contacts. If $A_n=0$, a new individual chosen from the unexplored population is recruited, no coupon is distributed, and we continue the survey. The process stops at $n=N$, when all vertices in the population have been explored. Thus,\\
	\begin{align}\label{def:AB}
	A_{n+1}= &  A_n - \ind_{\{A_n\geq 1\}} + Y_{n+1} \wedge c,\\
	B_{n+1} =  & B_n + H_{n+1} - (H_{n+1}+K_{n+1})\wedge c \nonumber
	\end{align}
	where $Y_{n+1}$ is the number of new neighbors of the $(n+1)^{\text{th}}$-individual interviewed; $H_{n+1}$ is the number of the $(n+1)^{\text{th}}$-interviewee's new neighbors, who were not mentioned before, and $K_{n+1}$ is the number of the $(n+1)^{\text{th}}$-interviewee's new neighbors, who are chosen amongst the individuals that we knew but do not have any coupon. Of course, $Y_{n+1}=H_{n+1}+K_{n+1}$. At this point, we can see that the transitions of the process $(X_n)_{n \in \N}$ depend heavily on the graph structure: this will determine the distributions of the random variables $Y_{n+1}$, $H_{n+1}$ and $K_{n+1}$ and their dependencies with the variables corresponding to past interviews (indices $n$, $n-1$..., $0$).

\paragraph{Case of Erd\"os-R\'enyi graphs} If the graph that we explore is an Erd\"os-R\'enyi graph \cite{bollobas2001,vanderhofstad}, then the process $(X_n)_{n\in \N}$ become a Markov process. In this first chapter, we carefully study this simple case and consider an Erd\"os-R\'enyi graph in the supercritical regime, where each pair of vertices is connected independently from the other with a given probability $\lambda/N$, with $\lambda>1$.
	
In this case, we have, conditionally to $A_{n-1}$ and $B_{n-1}$ at step $n$, that\\
\begin{align}\label{def:YHK}
	Y_n \stackrel{(d)}{=} &  \mathcal{B}in\big(N-n-A_{n-1},\frac{\lambda}{N}\big)\\
	H_n \stackrel{(d)}{=} & \mathcal{B}in\big(N-n-A_{n-1} - B_{n-1},\frac{\lambda}{N}\big)\\
	K_n \stackrel{(d)}{=} & \mathcal{B}in\big(B_{n-1},\frac{\lambda}{N}\big).
\end{align}
	
As we can notice in the presentation of the RDS process $(X_n)_{n\in \N}$, the exploration of the Erd\"os-R\'enyi random graph is done by visiting it with non-intersecting branching random walks. This idea is not new (see e.g. \cite{bollobasriordan2012,enriquezfaraudmenard,riordan}).
		
\paragraph{Plan of the paper} In Section \ref{sec:chmarkov}, we show that the process $(X_n)_{n\in \N}$ is a Markov chain and provide some computation for the time at which the number of coupons distributed touches zero, meaning that the RDS process has stopped and should be restarted with another seed. In Section \ref{sec:LLN}, the limit of the process $(X_n)_{n\in \N}$, correctly renormalized, is studied. We show that the rescaled process converges to the unique solution on $[0,1]$ of a system of ordinary differential equations. The fluctuations associated with this convergence are established in Section \ref{sec:TCL}.	\\
This work is part of the PhD thesis of Vo Thi Phuong Thuy \cite{vo_thesis}. The law of large numbers (Theorem \ref{chap1:mainthm}) can be seen as a particular case of the result of one of her other paper \cite{vo_SBM} where the considered graph is a Stochastic Block Model (see e.g. \cite{abbe2}). In the present work, the result is stated more clearly in this simplified setting (Erd\"os-R\'enyi graphs being seen as Stochastic Block Models with a single class) and is completed with the computation of the fluctuations (Section \ref{sec:TCL}). We also considered the computation of several quantities of interest in Section \ref{section:potentialtheory} using the properties of Markov chains.\\

	\noindent \textbf{Notation:}
	In all the paper, we consider for the sake of simplicity that the space $\R^d$ is equipped with the $L^1$-norm denoted by $\|.\|$: for all $x=(x_1,\ldots,x_d)\in \R^d$, $\|x\|=\sum_{i=1}^d |x_i|$.
	
\section{Study of the discrete-time RDS process on an Erd\"os-R\'enyi graph}\label{sec:chmarkov}
	
\subsection{Markov property and state space}
When the graph underlying the RDS process is an Erd\"os-R\'enyi graph, the RDS process $(X_n)_{n\in \N }$ becomes an inhomogeneous Markov process thanks to the identities \eqref{def:YHK}. It is then possible to compute the transitions of this process that depend on the time $n\in \{0,\dots N\}$.

\begin{proposition}Let us consider the Erd\"os-R\'enyi random graph on $\{1,\dots N\}$ with probability of connection $\lambda/N$ between each pair of distinct vertices. Consider the random process $X=(X_n)_{n\in \{0,\dots N\}}$ defined in \eqref{def:X}-\eqref{def:YHK}. Let $\mathcal{F}_n:= \sigma (\{X_i, i\leq n\})$ be the canonical filtration associated with the process $(X_n)_{n\in \{0,\dots N\}}$. The process $(X_n)_{n\in \{0,\dots N\}}$ is an inhomogeneous Markov chain with the following transition probabilities: \sloppy $\P(X_n=(a',b') \ |\ X_{n-1}=(a,b))=P_n((a,b),(a',b'))$. \\
\begin{equation}P_n((a,b),(a',b'))=\sum_{(h,k)} \binom{b}{k} \binom{N-n-a-b}{h} p^{h+k}(1-p)^{N-n-a-h-k}\label{def:transition},
\end{equation}
where the sum is ranging over $(h,k)$ such that $a'=a-\ind_{a\geq 1}+(h+k)\wedge c$ and $ b'=b+h-(h+k)\wedge c$.
\end{proposition}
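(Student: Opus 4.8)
The plan is to split the argument into (i) verifying the inhomogeneous Markov property, which is where the Erd\"os-R\'enyi structure enters, and (ii) a bookkeeping computation identifying the transition kernel with \eqref{def:transition}. The crux of (i) is the independence of the edge indicators of $G(N,\lambda/N)$: once we know that, conditionally on the past, the connections of the vertex interviewed at step $n$ towards the not-yet-explored part of the graph are still fresh i.i.d. Bernoulli variables, both the Markov property and the conditional laws \eqref{def:YHK} — with $H_n$ and $K_n$ \emph{independent} — follow at once, and (ii) is then a one-line manipulation of binomial weights.

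For (i), fix $n\ge1$ and let $v_n$ be the vertex interviewed at step $n$: one of the $A_{n-1}$ coupon-holders if $A_{n-1}\ge1$ (selected by the recruitment rule, possibly using auxiliary randomness independent of the graph), or a uniformly chosen unexplored vertex if $A_{n-1}=0$. The $\sigma$-field $\mathcal F_{n-1}=\sigma(X_0,\dots,X_{n-1})$ is measurable with respect to the edge indicators incident to the $n-1$ already interviewed vertices, together with the selection randomness. On the other hand $(Y_n,H_n,K_n)$ is a function of the edge indicators between $v_n$ and, respectively, the current set of unexplored vertices and the current set of the $B_{n-1}$ named-but-uncouponed vertices; none of these edges has been examined before step $n$, since an edge is examined only when one of its endpoints is interviewed, and neither $v_n$ nor any unexplored or named-but-uncouponed vertex has been interviewed yet. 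Hence, conditionally on $\mathcal F_{n-1}$, these edge indicators are still i.i.d. Bernoulli of parameter $p:=\lambda/N$; since the two target sets are disjoint with cardinalities that are deterministic functions of $(A_{n-1},B_{n-1})$ and $n$, precisely the parameters in \eqref{def:YHK}, the counts $H_n$ (connections to the unexplored vertices) and $K_n$ (connections to the named-but-uncouponed vertices) are conditionally independent binomial variables with the laws \eqref{def:YHK}, and $Y_n=H_n+K_n$. Exchangeability of the vertices makes the identity of $v_n$ irrelevant for these conditional laws, so the conditional law of $X_n=(A_n,B_n)$ given $\mathcal F_{n-1}$ depends on the past only through $X_{n-1}$: this is the (inhomogeneous) Markov property.

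For (ii), by \eqref{def:AB} the event $\{X_n=(a',b')\}$ coincides, on $\{X_{n-1}=(a,b)\}$, with the union over the pairs $(h,k)$ with $a'=a-\ind_{a\ge1}+(h+k)\wedge c$ and $b'=b+h-(h+k)\wedge c$ of the events $\{H_n=h,\ K_n=k\}$; by the conditional independence of $H_n$ and $K_n$,
\[
P_n\big((a,b),(a',b')\big)=\sum_{(h,k)}\binom{N-n-a-b}{h}p^{h}(1-p)^{N-n-a-b-h}\,\binom{b}{k}p^{k}(1-p)^{b-k},
\]
and collecting exponents via $(N-n-a-b-h)+(b-k)=N-n-a-h-k$ gives exactly \eqref{def:transition}. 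I expect the only genuinely delicate point to be the measurability claim used in (i) — the ``principle of deferred decisions'' stating that the first $n-1$ steps of the exploration have not consulted any edge between $v_n$ and the unexplored-or-uncouponed vertices, so those edges remain fresh and i.i.d. Bernoulli. I would make it rigorous either by listing explicitly the edges queried during the first $n-1$ steps, or by realizing the whole process as a deterministic functional of a single i.i.d. array $(\xi_{ij})_{i<j}$ of edge indicators and checking by induction on $n$ that step $n$ consults only entries not consulted before. Everything else is routine.
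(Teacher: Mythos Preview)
Your proof is correct and follows the same approach as the paper, which simply states that one computes $\P(X_{n+1}=(a',b')\mid\mathcal F_n)$ from \eqref{def:AB}--\eqref{def:YHK} and observes it depends only on $X_n$. Your version is in fact more complete: you make explicit the conditional independence of $H_n$ and $K_n$ (needed for the product form in \eqref{def:transition} but not stated in \eqref{def:YHK}) and you justify it cleanly via the deferred-decisions argument on the i.i.d.\ edge indicators.
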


	\begin{proof}For $n<N$, we compute $\P(X_{n+1}=(a',b') \ |\ \mathcal{F}_n)$ using \eqref{def:AB} and \eqref{def:YHK}. The fact that this probability depends only on $X_n$ shows the Markov property and provides the transition probability \eqref{def:transition}.
	\end{proof}

Of course, $A_n, B_n\in \{0,\dots N\}$ but there are more constraints on the components of the process $(X_n)$. First, the number of coupons in the population plus the number of interviewed individuals cannot be greater than the size of the population $N$, implying that:
\begin{equation}\label{eq:contrainte1} A_n+n\leq N\qquad \Leftrightarrow \qquad A_n\leq N-n.\end{equation}
Also, assume that at time $m\geq 0$,  $X_m=(\ell,k)$. Then, the number of coupons distributed in the population can not increase of more than $c-1$ at each step and can not decrease of more than 1. Thus, \begin{equation}\label{eq:contrainte2}\ell - (n-m)\leq A_n\leq \ell + (n-m)\times (c-1).\end{equation}
Thus, the points $(n,A_n)$, for $n\geq m$, belong to the grey area on Fig. \ref{fig:statespace}. Let us denote by $S$ this grey region defined by \eqref{eq:contrainte1} and \eqref{eq:contrainte2}.

	\begin{equation}
	S = \bigg\{ (n,a) \in \{m,...,N\} \times\{0,...,N-\ell\} \ \big| \ \max\{\ell - (n-m),0\}\leq a \leq \min\{\ell+(n-m)\times (c-1), N-n\} \bigg\}.
	\label{def:S}
	\end{equation}

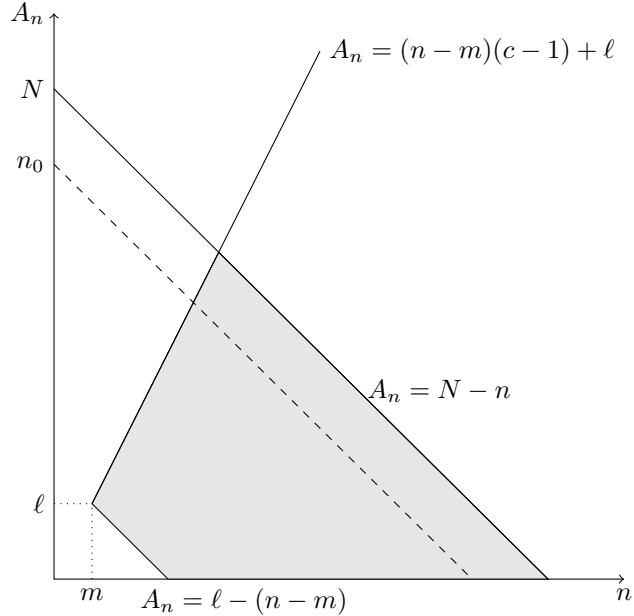
\begin{figure}[ht]
\begin{center}
    \begin{tikzpicture}[scale=0.5]
      \draw[<->] (0,15) -- (0,0) -- (15,0);
      \draw[black,fill=gray!20] (4.3333,8.6666)--(13,0)--(3,0)--(1,2)--(4.3333,8.6666);
      \draw (0,13) -- (13,0);
      \draw (1,2) -- (7,14);
      \draw[dotted] (1,0)--(1,2);
      \draw[dotted] (0,2)--(1,2);
      \draw[dashed] (0,11)--(11,0);
      \draw(15,0) node[below]{$n$};
      \draw(0,15) node[left]{$A_n$};
      \draw(1,0) node[below]{$m$};
      \draw(0,2) node[left]{$\ell$};
      \draw(7,14) node[right]{$A_n=(n-m)(c-1)+\ell$};
      \draw(5,0) node[below]{$A_n=\ell-(n-m)$};
      \draw(0,13) node[left]{$N$};
      \draw(8,5) node[right]{$A_n=N-n$};
      \draw(0,11) node[left]{$n_0$};
    \end{tikzpicture}
 \caption{{\small \textit{Grey area $S$: Set of states susceptible to be reach from the process $(A_n)$ started at time $m$ with $A_m=\ell$, as defined by the constraints \eqref{eq:contrainte1} and \eqref{eq:contrainte2}. The process $(A_n)$ can be stopped upon touching the abscissa axis, which corresponds to the state when the interviews stop because there is no coupons in population any more. The chain conditioned on touching the abscissa axis at $(n_0,0)$ can not cross the dashed line, which is an additional constraint on the state space.}}}\label{fig:statespace}
\end{center}
\end{figure}

\subsection{Stopping events of the RDS process}\label{section:potentialtheory}

We now investigate the first time $\tau$ when $A_\tau=0$, i.e. the time at which the RDS process stops if we do not add another seed because there is no more coupon in the population. Let us define by
	\begin{equation}\label{def:tau}
	\tau := \inf \{n \geq 0, A_n =0\}
	\end{equation}the first time where the RDS process touches the abcissa axis. This stopping time corresponds to the size of the population that we can reach without additional seed other than the initial ones. \\
Our process evolves in a finite population of size $N$, and we have seen that the process $A_n\leq N-n$. Thus, $\tau\leq N<+\infty$ almost surely.\\
	
	For $(n_0,m,\ell)\in \N^3$, let us define the probability that the RDS process without additional seed stops after having seen $n$ vertices and discovered $n_0$ other existing potential vertices:\\
	\begin{equation}\label{def:proba-extinction}
	u_{n_0}(m,\ell) = \P\big(\tau =n_0\ |\ A_m=\ell\big).
	\end{equation}
	By potential theory, $u_{n_0}(.,.)\ :\ S\mapsto [0,1]$ is the smallest solution of the system which, thanks to the previous remarks on the state space of the process, involves only a finite number of equations:\\
	\begin{align}
	& u_{n_0}(n_0,0)=1,\qquad \forall n\not=n_0,\ u_{n_0}(n,0)=0,\label{eq:rec1}\\
	& u_{n_0}(n,a)=\sum_{a'\ | \ (n+1,a')\in S} P_n\big(a,a'\big)u_{n_0}(n+1,a'),\quad n\leq n_0-1,\ 1 \leq a\leq N,\label{eq:rec2}
	\end{align}where $P_n(a,a')=  \P\big(A_{n+1}=  a'\ |\ A_n=a\big)$
In fact, the support of $u_{n_0}$ is strictly included in $S_{n_0}$ defined as follows, when $n_0<N$:
	\begin{equation}
	S_{n_0} = \bigg\{ (n,a) \in \{m,...,N\} \times\{0,...,N-\ell\} \ \big| \ \max\{\ell - (n-m),0\}\leq a \leq \min\{\ell+(n-m)\times (c-1), n_0-n\} \bigg\}
	\label{def:Sn0}
	\end{equation}
since the maximal number of interviewed individuals (and hence of distributed coupons) is $n_0$ on the event of interest (see dashed line in Fig. \ref{fig:statespace}).\\
	
For Erd\"os-R\'enyi graphs with connection probability $\lambda/N$, we have more precisely:
\begin{align*}
P_n(a,a') 
= & \left\{\begin{array}{cl}
{ N-(n+1)-a\choose k }  \big(\frac{\lambda}{N}\big)^k \big(1-\frac{\lambda}{N}\big)^{N-(n+1)-a-k}  & \mbox{ if } -1\leq  a'-a=k-1< c-1\\
1-\sum_{k=0}^{c-1}\big[{ N-(n+1)-a\choose k }  \big(\frac{\lambda}{N}\big)^k \big(1-\frac{\lambda}{N}\big)^{N-(n+1)-a-k}\big] & \mbox{ if }a'-a=c-1\\
0 & \mbox{ otherwise }
\end{array}\right.
\end{align*}
Let us define for $n\geq 0$:
\begin{equation}
\mathbf{U}_{n_0}^{(n)}:=\left(\begin{array}{c}
u_{n_0}(n,1)\\
\vdots\\
u_{n_0}(n,a)\\
\vdots \\
u_{n_0}(n,n_0)
\end{array}\right)
\end{equation}and $\mathbf{P}_{n_0}^{(n)}$ the $n_0\times n_0$ matrix with entries $( P_n(a,a') ; 1\leq a,a'\leq n_0)$. Then, for $n<n_0-1$, the solution of the system \eqref{eq:rec2} can be solved recursively with the boundary conditions \eqref{eq:rec1} and:
\[\mathbf{U}_{n_0}^{(n)} = \mathbf{P}_{n_0}^{(n)} \mathbf{U}_{n_0}^{(n+1)}.\]

We can compute solve the above equations, as represented in Fig. \ref{fig:calculproba}.

\begin{figure}[!ht]\label{fig:calculproba}
	\begin{center}\begin{tabular}{c c}
			 \includegraphics[height=5cm,width=7cm]{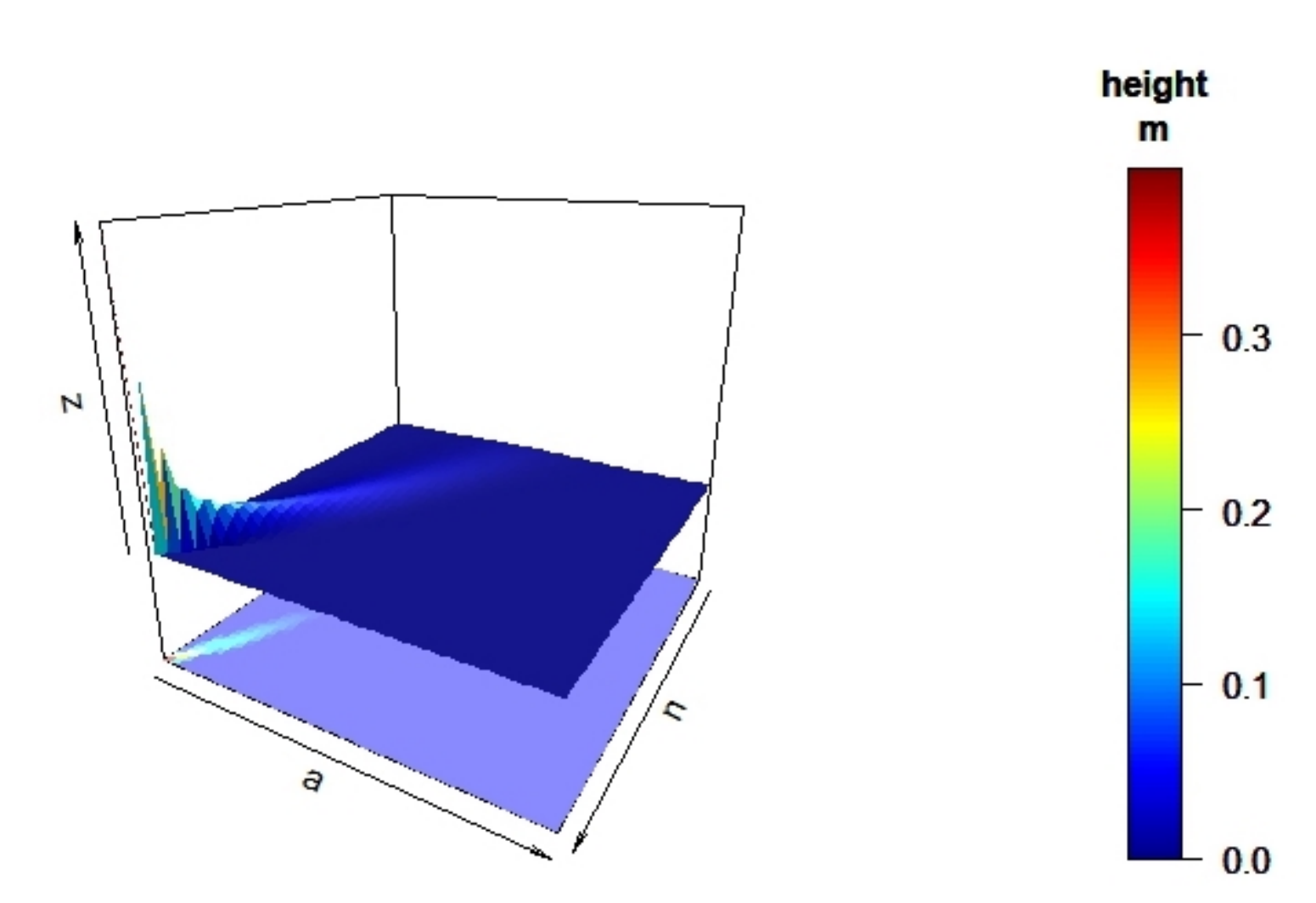} & \includegraphics[height=5cm,width=7cm]{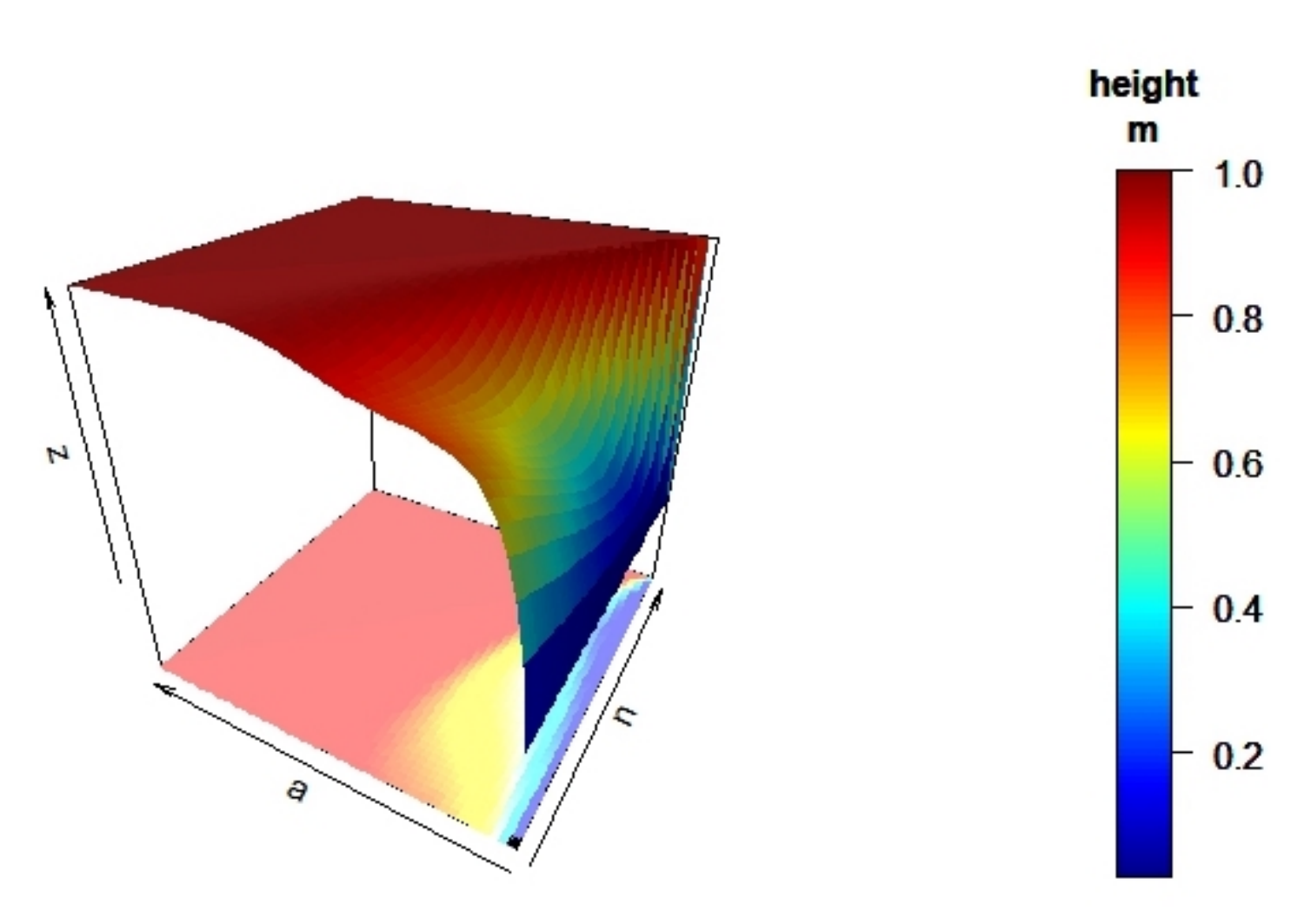}
	\end{tabular}\end{center}
\caption{{\small \textit{Left: Numerical computation of $u_{n_0}(n,a)=\P(\tau=n_0\ |\ A_n=a)$, with $n_0=50$ and for $a$ and $n$ varying between 0 and 50. Right: Numerical computation of the probability $\P(\tau >n_0\ |\ A_n=a)$, with $n_0=50$. We can see that if $a\geq n$ then, this probability is equal to 1. We can use these numerical results for $n=0$: this provides the probability, given the number of seeds, to reach a sample of size at least $n_0$.}}}
\end{figure}

Starting from 1 coupon at time 0, we can also compute the probabilities that $\P(\tau >n_0\ |\ A_0=1)$ as seen in Fig. \ref{fig:survie}

\begin{figure}[!ht]\label{fig:survie}
	\begin{center}\begin{tabular}{c}
			 \includegraphics[height=5cm,width=7cm]{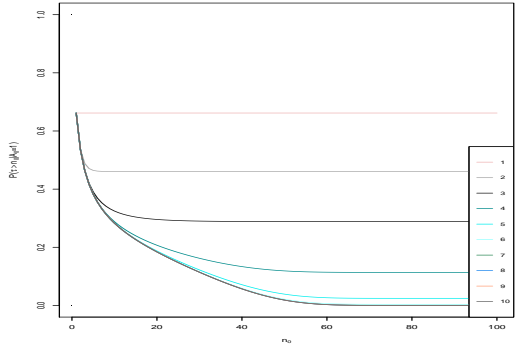}
	\end{tabular}\end{center}
\caption{{\small \textit{Numerical computation of the probability $\P(\tau >n_0\ |\ A_0=1)$ of obtaining a sample of size at least $n_0$ starting from 1 coupon at time 0 (ordinate), with $c$ varying from 1 to 10 (colours) and $n_0$ varying between 1 and 100 (abscissa). }}}
\end{figure}

	\section{Limit of the normalized RDS process}\label{sec:LLN}
	
	For an integer $N\geq 1$, let us consider the following renormalization $X^N=(A^N,B^N)$ of the process $X$:\\
	\begin{equation}
	X^N_t:=\dfrac{1}{N}X_{\lfloor Nt \rfloor }=\left(\dfrac{A_{\lfloor Nt \rfloor }}{N},\dfrac{B_{\lfloor Nt \rfloor }}{N}\right) \in [0,1]^2, \quad  t \in [0,1].
	\end{equation}
	Notice that $X^N$ is constant by part and jumps at the times $t_n=n/N$ for $n\in \{1,\dots, N+1\}$. Thus the process $X^N$ belongs to the space $\mathcal{D}([0,1],[0,1]^2)$ of c\`adl\`ag processes from $[0,1]$ to $[0,1]^2$ embedded with the Skorokhod topology \cite{billingsley_probability_and_measure,jakubowski}.
	Define the filtration associated to $X^N$ as $(\mathcal{F}_t^N)_{t\in [0,1]} = ( \mathcal{F}_{\lfloor Nt \rfloor})_{t\in [0,1]}$.
	We aim to study the limit of the normalized process $X^N=(A^N,B^N)$ when $N$ tends to infinity.
	
	\begin{assumption} \label{assptn:initialER}
		Let $a_0, b_0 \in [0,1]$ with $a_0>0$ and $b_0 =0$. We assume that the sequence $X_0^N = \frac{1}{N}X_0$ converges in probability to the vector $x_0 = (a_0, b_0)$ as $N$ tends to infinity.
	\end{assumption}
	\begin{theorem}\label{chap1:mainthm}Under the assumption \ref{assptn:initialER}, when $N$ tends to infinity, the sequence of processes $X^N=(A^N,B^N)$ converges in distribution in $\mathcal{D}([0,1], [0,1]^2)$ to a deterministic path $x=(a,b)\in \mathcal{C}([0,1],[0,1]^2)$, which is the unique solution of the following system of ordinary differential equations
		
		\begin{equation}\label{eqchap1:limit-ode}
		x_t=x_0 + \int_{0}^{t} f(s,x_s)ds,
		\end{equation}
		where $f(t,x_t)= (f_1(t,x_t), f_2(t,x_t))$ has the explicit formula:
		
		\begin{align}
		f_1(t,x_t)&= c- \sum_{k=0}^{c-1} (c-k) p_k(t+a_t) - \ind_{a_t>0}  \label{chap1:eqf1}\\
		f_2(t,x_t)&= (1-t-a_t - b_t)\lambda + \sum_{k=0}^{c-1}(c-k)p_k(t+a_t) -c, \label{chap1:eqf2}
		\end{align}
		with
		\begin{equation}
		p_k(z):=\dfrac{\lambda^k (1-z) ^k}{k!} e^{-\lambda(1-z)}, \quad k \in \{0,...,c\}, \label{eq:pk}
		\end{equation}
		and $c$ is the maximum value of coupons distributed at each time step.
	\end{theorem}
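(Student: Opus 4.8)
The plan is the standard fluid-limit scheme: write the rescaled chain as a predictable part plus a martingale, identify the limit of the predictable part with $\int_0^\cdot f(s,x_s)\,ds$, show the martingale part is negligible, deduce tightness, and close the argument with uniqueness for the limiting ODE. Concretely, with respect to $(\mathcal{F}^N_t)$ I would use the Doob decomposition $X^N_t = X^N_0 + \Phi^N_t + M^N_t$, where $\Phi^N_t := \sum_{n=1}^{\lfloor Nt\rfloor}\E\big[X^N_{n/N}-X^N_{(n-1)/N}\mid\mathcal{F}_{n-1}\big]$ and $M^N$ is a square-integrable martingale null at $0$.

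The heart of the computation is the drift. From \eqref{def:AB}, $\E[A_n-A_{n-1}\mid\mathcal{F}_{n-1}] = -\ind_{\{A_{n-1}\ge 1\}}+\E[Y_n\wedge c\mid\mathcal{F}_{n-1}]$ and, since $Y_n=H_n+K_n$, $\E[B_n-B_{n-1}\mid\mathcal{F}_{n-1}] = \E[H_n\mid\mathcal{F}_{n-1}]-\E[Y_n\wedge c\mid\mathcal{F}_{n-1}]$. Writing $\E[Y_n\wedge c\mid\mathcal{F}_{n-1}] = c-\sum_{k=0}^{c-1}(c-k)\,\P(Y_n=k\mid\mathcal{F}_{n-1})$ and plugging in \eqref{def:YHK}, the binomial weight $\binom{N-n-A_{n-1}}{k}(\lambda/N)^k(1-\lambda/N)^{N-n-A_{n-1}-k}$ converges, along $n=\lfloor Nt\rfloor$ and on the event $A_{n-1}/N\approx a_t$, to the Poisson$(\lambda(1-t-a_t))$ weight $p_k(t+a_t)$ of \eqref{eq:pk}; similarly $\E[H_n\mid\mathcal{F}_{n-1}] = (N-n-A_{n-1}-B_{n-1})\lambda/N\to\lambda(1-t-a_t-b_t)$. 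Converting the telescoping sum $\Phi^N_t$ into a Riemann integral then produces exactly $\int_0^t f_1(s,x_s)\,ds$ and $\int_0^t f_2(s,x_s)\,ds$ with $f_1,f_2$ as in \eqref{chap1:eqf1}--\eqref{chap1:eqf2}; the only substantive point is reading off the Poisson parameter from $(N-n-A_{n-1})\lambda/N\approx\lambda(1-t-a_t)$.

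For the martingale, each rescaled increment has conditional variance $O(1/N^2)$: $\Var(A_n-A_{n-1}\mid\mathcal{F}_{n-1})=\Var(Y_n\wedge c\mid\mathcal{F}_{n-1})\le\Var(Y_n\mid\mathcal{F}_{n-1})\le\lambda$, and $\Var(B_n-B_{n-1}\mid\mathcal{F}_{n-1})\le 2\Var(H_n\mid\mathcal{F}_{n-1})+2\Var(Y_n\wedge c\mid\mathcal{F}_{n-1})\le 4\lambda$, so $\E[\langle M^N\rangle_1]\le C/N$ and Doob's $L^2$ inequality gives $\E\big[\sup_{t\le 1}\|M^N_t\|^2\big]\to 0$. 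Tightness is then easy: $X^N$ lives in the compact set $\{(a,b)\in[0,1]^2:a+b\le 1\}$, the conditional mean increments are bounded by $C/N$ so that $\|\Phi^N_t-\Phi^N_s\|\le C(|t-s|+1/N)$, and together with the uniform smallness of $M^N$ this makes $(X^N)$ tight in $\mathcal{D}([0,1],[0,1]^2)$ with only continuous limit points.

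Finally, let $x=(a,b)$ be a subsequential limit; by Skorokhod's representation we may take $X^N\to x$ almost surely and uniformly. Passing to the limit in $X^N_t=X^N_0+\Phi^N_t+M^N_t$ and using the continuity of $(s,a)\mapsto p_k(s+a)$ and of the remaining terms gives $x_t=x_0+\int_0^t f(s,x_s)\,ds$ on any interval where $a$ stays positive, with $\ind_{\{A_{\lfloor Ns\rfloor-1}\ge 1\}}\to\ind_{\{a_s>0\}}$ there. On $\{a>0\}$, $f$ is smooth hence locally Lipschitz, so Cauchy--Lipschitz gives a unique solution up to the hitting time $\theta:=\inf\{t:a_t=0\}$, and the solution is then continued on $[\theta,1]$ by absorption on the abscissa axis (cf. Figure \ref{fig:statespace}), which pins down $x$ on all of $[0,1]$; uniqueness of the limit upgrades the subsequential convergence to convergence of the whole sequence. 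I expect the main difficulty to be exactly the behaviour at the absorbing boundary $\{a=0\}$: $f_1$ is discontinuous there, so one must control the occupation time of $\{A_{n-1}\ge 1\}$ in a neighbourhood of $\{a=0\}$ when identifying the limit, and give a rigorous meaning to --- and a uniqueness statement for --- the absorbed dynamics; away from this boundary everything is the routine fluid-limit argument.
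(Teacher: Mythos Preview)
Your proposal is correct and follows essentially the same route as the paper: Doob decomposition of $X^N$, control of the predictable part via the binomial-to-Poisson approximation $\P(Y_n=k\mid\mathcal{F}_{n-1})\to p_k(t+a_t)$, vanishing of the martingale via Doob's $L^2$ inequality, tightness of Aldous--Rebolledo type, identification of limit points through Skorokhod's representation, and uniqueness of the ODE; the paper likewise isolates the boundary $\{a=0\}$ as the only nontrivial step and treats it in a separate proposition (Proposition~\ref{chap1prop:t0}), showing that the discontinuity of $f_1$ there forces absorption, exactly the issue you flag at the end.
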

	
	\begin{remark}
		Since the limiting process $x \in \mathcal{C}([0,1], [0,1]^2)$ is deterministic, the convergence in distribution of Theorem \ref{chap1:mainthm} is in fact a convergence in probability. The limiting shape is illustrated in Fig. \ref{fig:comparerXnetx}.
	\end{remark}

\begin{figure}\label{fig:comparerXnetx}
	\begin{tabular}{c c}
			\includegraphics[height=5cm,width=7cm]{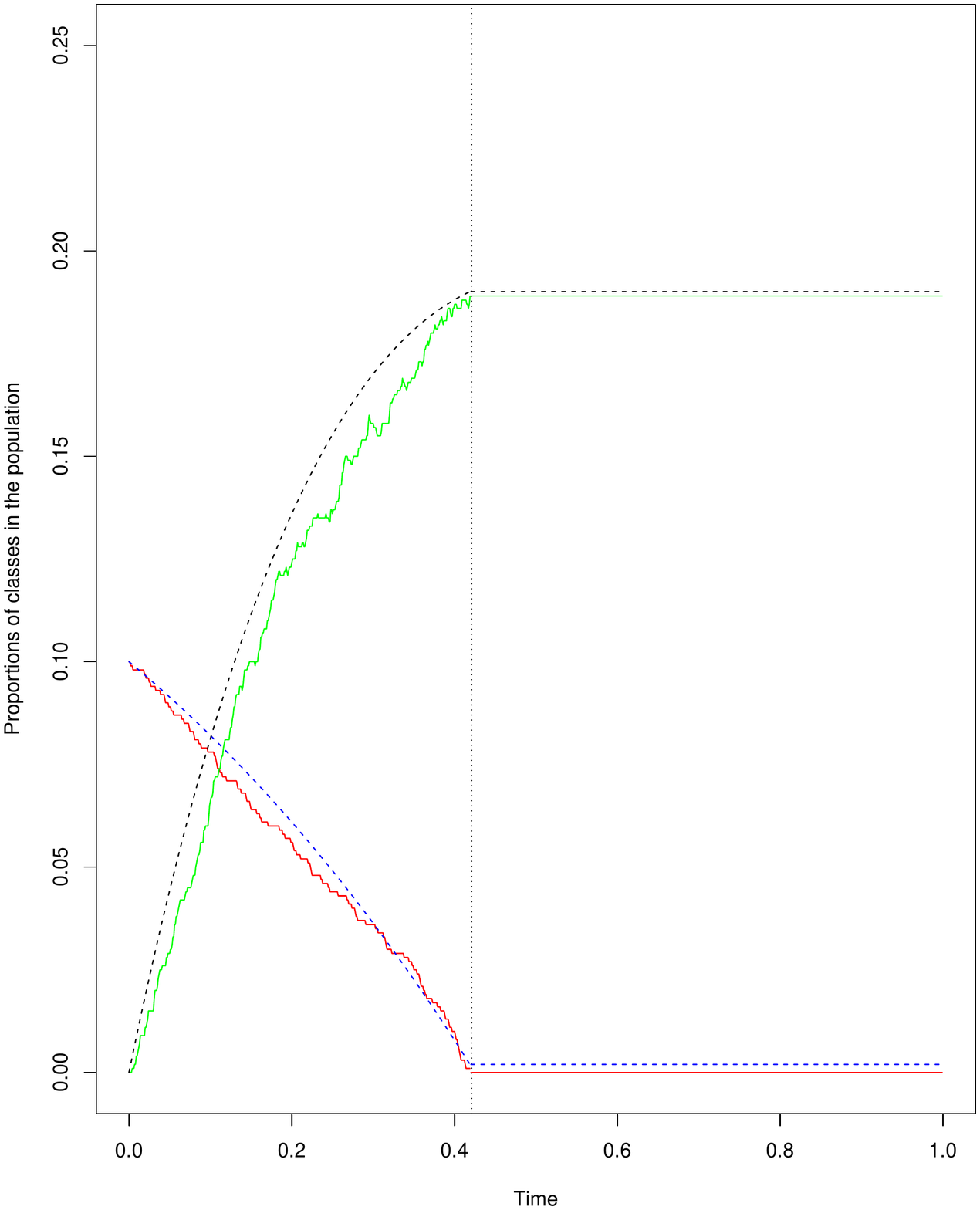} & \includegraphics[height=5cm,width=7cm]{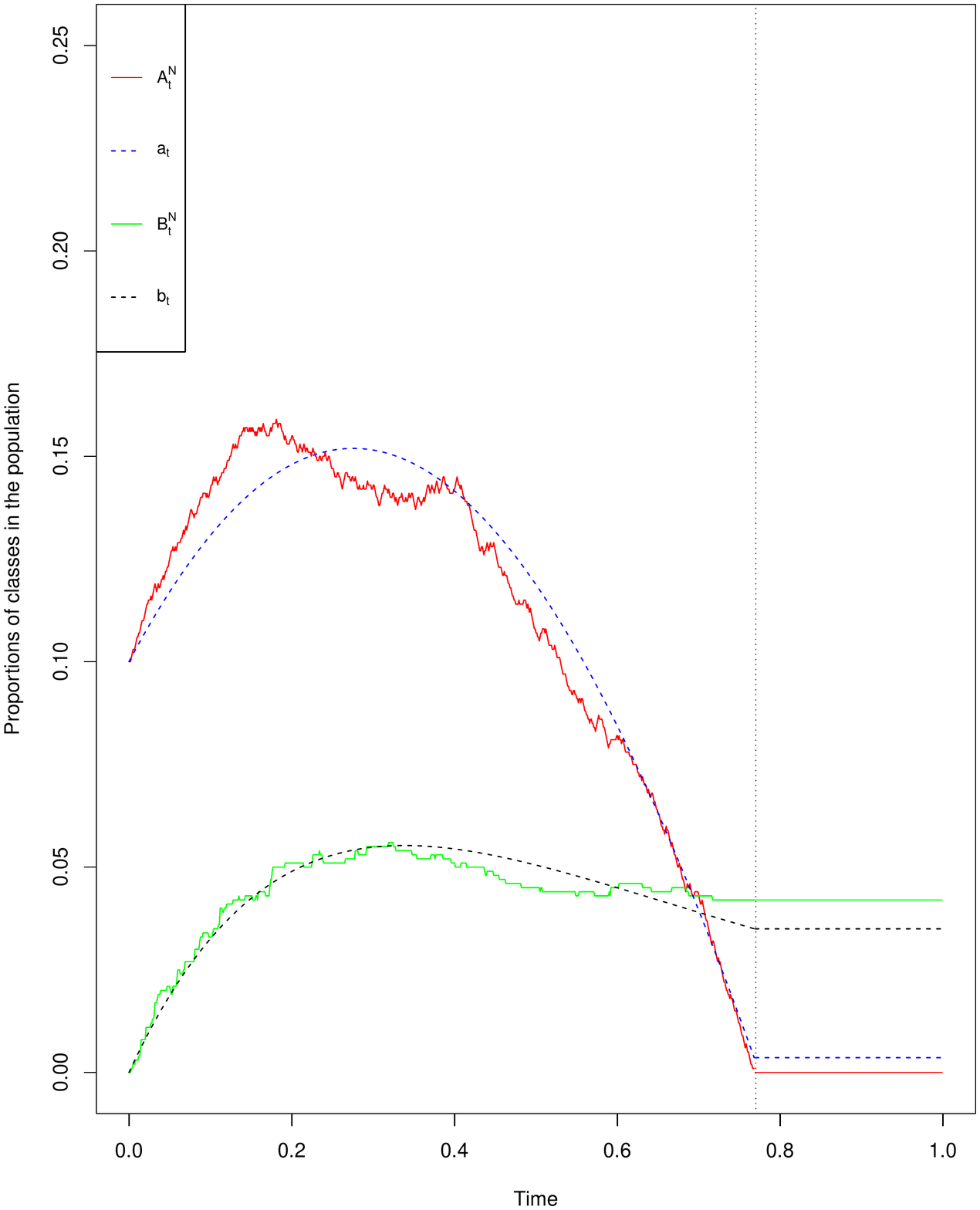} \\
		\includegraphics[height=5cm,width=7cm]{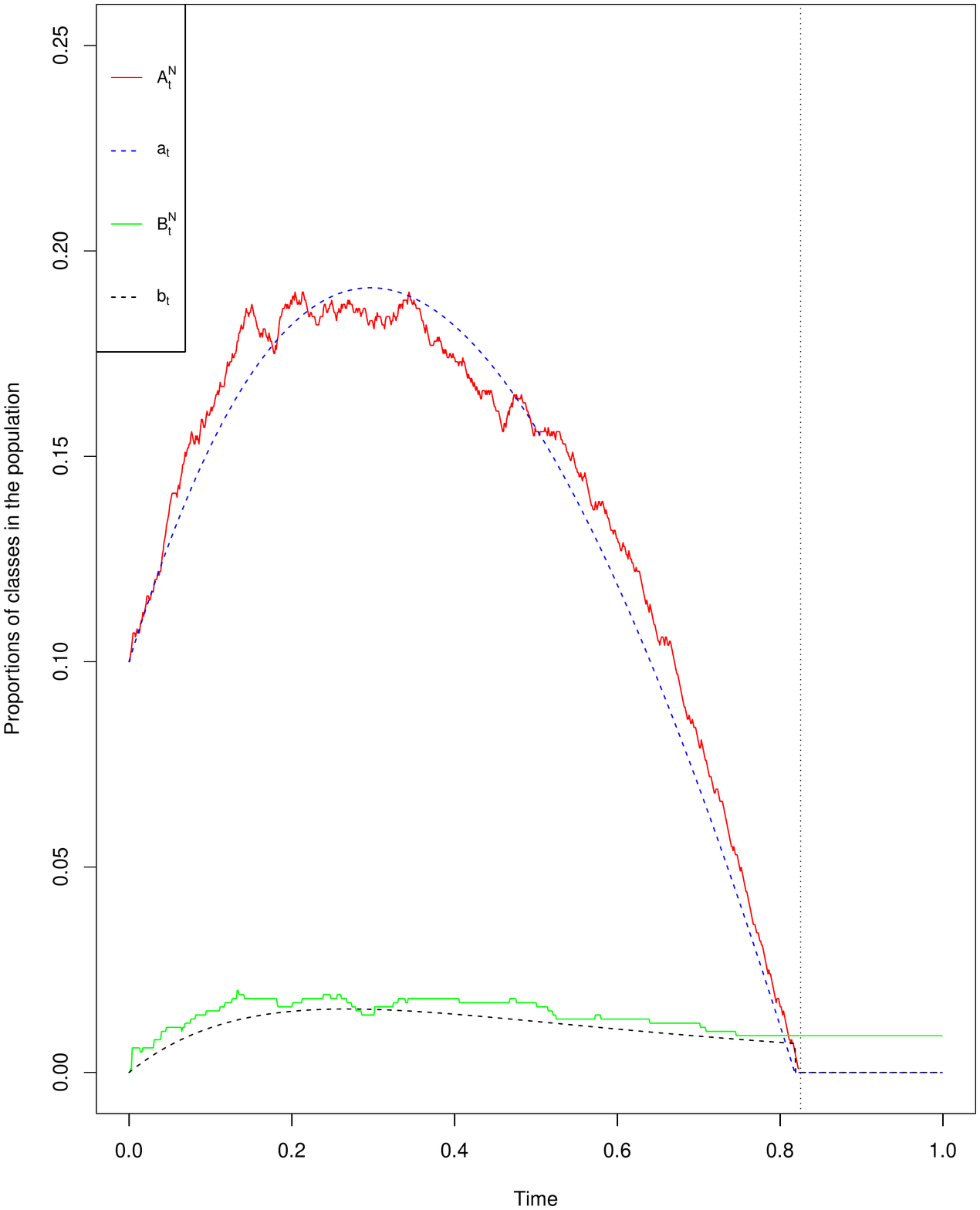} & \includegraphics[height=5cm,width=7cm]{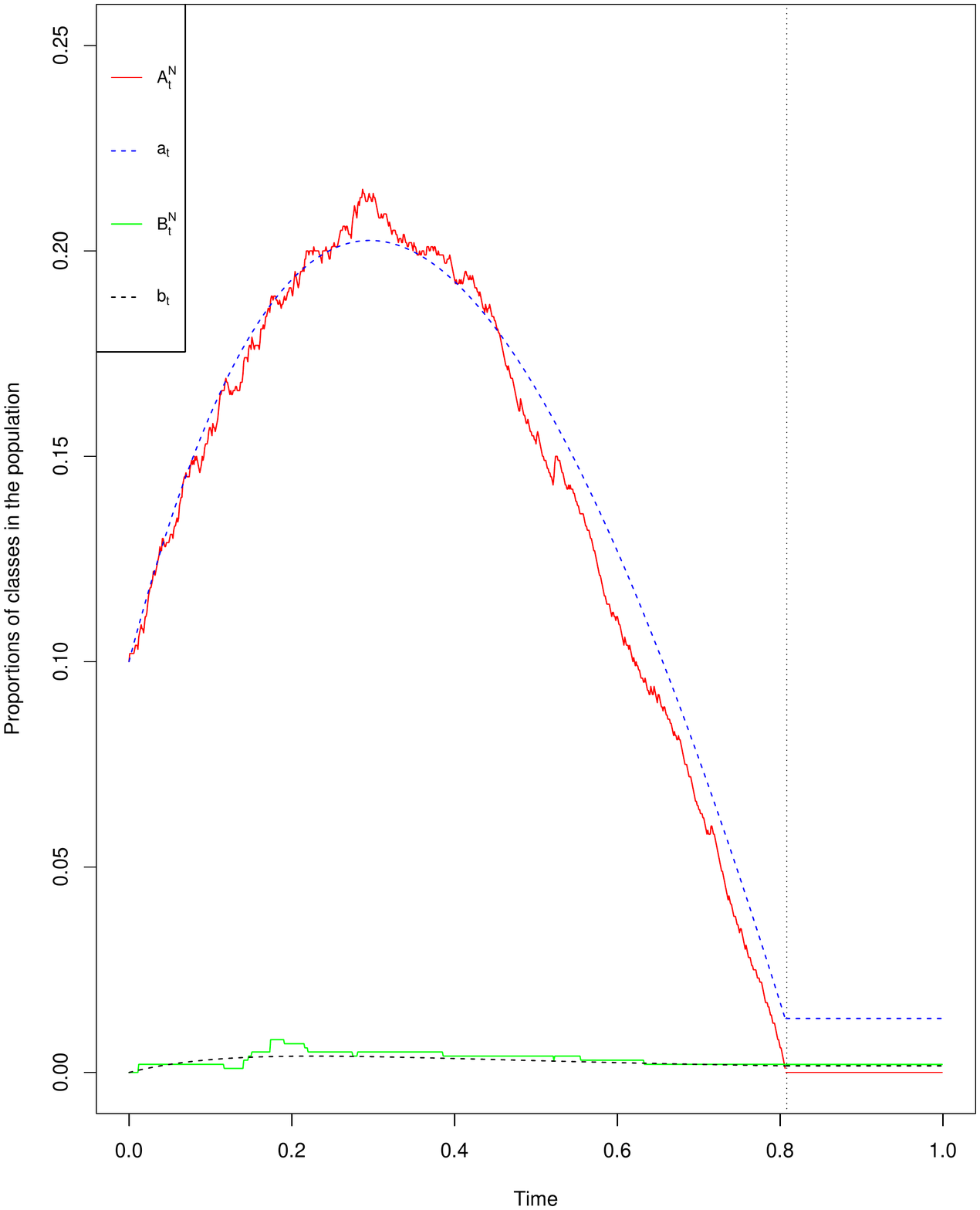}
	\end{tabular}
\caption{{\small \textit{Simulations of the process $(A^N, B^N)$ (red/green lines) compared to solution $(a,b)$ of the ODE's system (dashed lines) for the graph of size $N=1000$, the value of $\lambda$ is fixed: $\lambda=2$ and various values of $c$: $c = 1,2,3,4$.}} }
\end{figure}
	
	The proof of Theorem \ref{chap1:mainthm} follows the steps below. First, we enounce a
	semi-martingale decomposition for $(X^N)_{N \geq 1}$ that allows us to prove the tightness of the sequence $(X^N)_{N\geq 1}$ by using Aldous-Rebolledo criteria. Then, we identify the equation satisfied by the limiting values of $(X^N)_{N \geq 1}$, and show that the latter has a unique solution.
	
	Let us first have some comments on the solution of \eqref{eqchap1:limit-ode}.
	
	\begin{proposition} \label{chap1prop:t0}
		Let us denote\\
		\begin{equation}
		t_0 := \inf\{ t \in [0,1]: |a_t| =0\}.
		\end{equation}
		Then $a_t=0, \forall t \in [t_0,1]$.
	\end{proposition}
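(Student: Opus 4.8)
The plan is to reduce the claim to two elementary facts about the scalar ``birth rate'' governing $a$ near the abscissa axis, and then to run two mean-value arguments: one to show this rate is at most $1$ at the first vanishing time $t_0$, and one to forbid $a$ from leaving $0$ afterwards. First I would set, for $z\in[0,1]$,
\[\Phi(z):=c-\sum_{k=0}^{c-1}(c-k)\,p_k(z),\]
so that \eqref{chap1:eqf1} reads $f_1(t,x_t)=\Phi(t+a_t)-\ind_{a_t>0}$. Writing $N_z\sim\mathrm{Pois}(\lambda(1-z))$ one checks $\sum_{k=0}^{c-1}(c-k)p_k(z)=\E\big[(c-N_z)^+\big]$, whence the convenient identity $\Phi(z)=\E\big[N_z\wedge c\big]$; since $z\mapsto\lambda(1-z)$ is non-increasing and $\mathrm{Pois}(\mu)$ is stochastically non-decreasing in $\mu$, the map $\Phi$ is continuous and non-increasing on $[0,1]$. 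By continuity of $x\in\mathcal C([0,1],[0,1]^2)$, the definition of $t_0$ gives $a_t>0$ on $[0,t_0)$ and $a_{t_0}=0$ (the statement being vacuous if $a$ never vanishes and trivial if $t_0=1$, so I assume $t_0<1$). I would also record that on any open subinterval of $(0,1)$ where $a>0$ the integrand $s\mapsto f_1(s,x_s)=\Phi(s+a_s)-1$ is continuous, so there $a$ is $\mathcal C^1$ with $\dot a_s=\Phi(s+a_s)-1$; this is the only place where the ODE \eqref{eqchap1:limit-ode} is invoked.

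Then I would prove that $\Phi(t_0)\le1$. For $t<t_0$, the mean value theorem applied to $a$ on $[t,t_0]$ produces $\xi_t\in(t,t_0)$ with $-a_t=a_{t_0}-a_t=\big(\Phi(\xi_t+a_{\xi_t})-1\big)(t_0-t)$; since $a_t>0$ and $t_0-t>0$ this forces $\Phi(\xi_t+a_{\xi_t})<1$. Letting $t\uparrow t_0$ we have $\xi_t\to t_0$ and $a_{\xi_t}\to0$, so continuity of $\Phi$ yields $\Phi(t_0)\le1$, and monotonicity of $\Phi$ then gives $\Phi(z)\le1$ for every $z\in[t_0,1]$.

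Finally, I would argue by contradiction: suppose $a_s>0$ for some $s\in(t_0,1]$. The set $\{v\in[t_0,s]:a_v=0\}$ is non-empty (it contains $t_0$) and closed, hence admits a maximum $u<s$, and $a>0$ on $(u,s]$. The mean value theorem on $[u,s]$ produces $\xi\in(u,s)$ with $a_s=a_s-a_u=\big(\Phi(\xi+a_\xi)-1\big)(s-u)>0$, so $\Phi(\xi+a_\xi)>1$; but $\xi+a_\xi\ge\xi>u\ge t_0$ gives $\Phi(\xi+a_\xi)\le\Phi(t_0)\le1$, a contradiction. Hence $a_t=0$ for all $t\in[t_0,1]$.

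I expect the only genuinely delicate point to be the bookkeeping in the first paragraph: the argument must invoke the differential identity $\dot a_s=\Phi(s+a_s)-1$ exclusively on open sets where $a>0$ — this is what makes it insensitive to how the ``absorbed'' solution is defined on the locus $\{a=0\}$, where $f_1$ is discontinuous — and it relies on the monotonicity of $\Phi$ (equivalently, on the rewriting $\Phi(z)=\E[N_z\wedge c]$), which is precisely what propagates the bound $\Phi(t_0)\le1$ forward to all of $[t_0,1]$. Everything else is routine.
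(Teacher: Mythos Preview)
Your argument is correct and more economical than the paper's. The paper also works with the function you call $\Phi$ (denoted $\phi$ there), but establishes its monotonicity by computing $\phi'$ explicitly (Lemma~\ref{phi}) rather than via your probabilistic identity $\Phi(z)=\E[N_z\wedge c]$; it then locates the unique root $z_c$ of $\phi(z)=1$, shows that $t\mapsto t+a_t$ is strictly increasing on $[0,t_0]$, and runs a three-way case analysis on the position of $z_c$ relative to $a_0$ and $t_0$ to rule out $z_c>t_0$ --- which is exactly your inequality $\Phi(t_0)\le1$ in disguise. Your single mean-value computation replaces that whole case analysis, and your proof handles all $c\ge1$ uniformly, whereas the paper treats $c=1$ separately. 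For the absorption step the paper argues heuristically (``the vector field has a discontinuity directed toward negative ordinates when $a>0$ and positive when $a<0$''); your second MVT contradiction makes this step rigorous. What the paper's longer route does buy is extra qualitative information about the shape of $a$ on $[0,t_0]$ (unimodality, with the maximum at the time $t_c$ where $t_c+a_{t_c}=z_c$), which is not needed for Proposition~\ref{chap1prop:t0} itself but is of independent interest.
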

	\begin{proof}
		For $c=1$, \eqref{chap1:eqf1}-\eqref{chap1:eqf2} gives that\\
		$$\frac{da}{dt}=1-p_0(t+a)-\ind_{a>0}=\begin{cases}
		-e^{-\lambda(1-t-a)}<0 & \mbox{ if }a>0\\
		1-e^{-\lambda(1-t)}>0 & \mbox{ if }a=0.
		\end{cases}$$
		Recall also that for all $t \in [0,1]$, $a_t+t \in [0,1]$ since it corresponds to the proportion of individuals who have received a coupon (already interviewed or not). The right hand side of \eqref{chap1:eqf1}-\eqref{chap1:eqf2} has a discontinuity on the abscissa axis that implies that the solution stays at 0 after $t_0$. Notice that this was expected since when $c=1$, $\{0,1\}$ is an absorbing state for the Markov process $(A^N)_{N \geq 1}$.\\
		
		Let us now consider the case $c>1$. We have then that\\
		$$\frac{da}{dt}=\phi(a+t)-\ind_{a>0},$$
		where\\ \begin{equation}
		\phi(z):= c- \sum_{k=0}^{c-1} (c-k)p_k(z) = c- \sum_{k=0}^{c-1} (c-k) \dfrac{\lambda^k(1-z)^k}{k!}e^{-\lambda(1-z)}.
		\end{equation}
		By Lemma \ref{phi}, the function $\phi$ is strictly decreasing with $\phi(1)=0$ and
		$\phi(1-1/\lambda)>1.$
		From this we deduce that $\phi$ is a positive function on $(0,1)$ and that there exists a unique $z_c\in (1-1/\lambda, 1)$ such that $\phi(z_c)=1$.	For all $t$ such that $0 <t < t_0$, we have\\
		\[ \frac{d(a_t +t)}{dt} = \phi(a+t) - 1 + 1 = \phi(a_t +t) >0.\] It implies that $t\mapsto t+{a}_t$ is a strictly increasing function on $[0,t_0]$ and thus \\
		\[ a_0 < t+ a_t < t_0, \quad \forall t \in (0,t_0).\]
		
		If $z_c> t_0$, then $1= \phi(z_c) < \phi(t_0) < \phi(t+a_t) $ for all $t \in (0,t_0)$. It follows that $\frac{da_t}{dt} > 0$. Hence, $a_t$ is strictly increasing in the interval $(0,t_0)$. Notice that $t+a_t$ is continuous function on $[0,1]$, and since $t+ a_t$ is strictly increasing, we deduce that $0<a_0 < a_{t_0}=0$, which is impossible.\\
		If $z_c <a_0<t_0$, then $1 = \phi(z_c)> \phi(t +a_t)$ for all $t$ such that $t+a_t> z_c$. And thus $\frac{da_t}{dt} = \phi(t+a_t)-1< 0$ whenever $t+a_t>z_c$ and $a_t>0$.\\
		If $z_c \in [a_0, t_0]$, then there exists a unique $t_c \in [0,t_0]$ such that $t_c+a_{t_c}=z_c$. It follows that there is a value $t_c$ in the interval $[0,t_0]$ such that $\phi(t_c +a_{t_c})= 1$. Then $\phi(t+ a_t)>1$ for all $t \in (0,t_c)$ and $\phi(t+a_t)>1$ for $t\in (t_c ,1)$. Thus, \\
		\[ \frac{da_t}{dt} >0 \text{ when } t\in (0,t_c) \quad \text{ and } \quad \frac{da_t}{dt} <0 \text{ when } t \in \{t>t_c: a_t >0\}.\]
		\\
		
		After the time $t_0$, there is again a discontinuity in the vector field $(t,a)\mapsto \phi(t+a_t)-\ind_{a>0}$ which is directed toward negative ordinates when $a>0$ and positive ordinate when $a<0$. This implies that the solution of the dynamical system stays at 0 after time $t_0$.\\
		
	\end{proof}

\begin{remark}
The $t_0$ corresponds to the proportion of population explored by the RDS. When we proceed the RDS on the graph of size $N=1000$, $\lambda = 2$ and for various values of $c$: $c=1,2,3,4$, we obtain the approximated value of $t_0$ as table below:
\begin{center}
	\begin{tabular}{|c|c|c|c|c|c|c|}
		\hline
		$c$ & 1&2 & 3& 4 &5 &6\\
		\hline
		$t_0$& 0.426 & 0.775 & 0.818 & 0.827 & 0.829& 0.829\\
		\hline
	\end{tabular}
\end{center}
\end{remark}
	
	Now, for the first step of the proof of Theorem \ref{chap1:mainthm}, we write the Doob's decomposition of $(X^N)_{N \geq 1}$ as follows.
	
	\begin{lemma} \label{lem:doobsdecompositionofX}The process $X^N$, for $N\in \N^*$, admits the following Doob decomposition: $X_t^N= X^N_0+ \Delta_t^N +M_t^N$, or in the vectorial form\\
		\begin{equation}\label{eq:Doob}
		\begin{pmatrix}
		X^{N,1}\\
		X^{N,2}
		\end{pmatrix}
		=  \begin{pmatrix}
		A^N_0\\ B^N_0
		\end{pmatrix} + \begin{pmatrix}
		\Delta^{N,1}\\
		\Delta^{N,2}
		\end{pmatrix} +
		\begin{pmatrix}
		M^{N,1}\\
		M^{N,2}
		\end{pmatrix}.
		\end{equation}
		The predictable process with finite variations $\Delta^N$ is:\\
		\begin{equation}\label{eq:DeltaN}
		\begin{pmatrix}
		\Delta^{N,1}_t\\
		\Delta^{N,2}_t
		\end{pmatrix} = \displaystyle\dfrac{1}{N}\sum_{n=1}^{\lfloor Nt \rfloor}
		\begin{pmatrix}
		\E[ Y_n\wedge c \ |\ \mathcal{F}_{n-1}] - \ind_{A_{n-1}\geq 1}\\
		\mathbb{E}[H_n - Y_n\wedge c\ |\ \mathcal{F}_{n-1}]
		\end{pmatrix}
		\end{equation}
		The  square integrable centered martingale $M^N$ has quadratic variation process
		$\langle M^N \rangle$ given as follows:\\
		\begin{equation}\label{eq:bracketM}
		\langle M^N\rangle_t = \frac{1}{N^2}\sum_{n=1}^{\lfloor Nt \rfloor}\left(\begin{array}{cc}
		\Var\big(Y_n\wedge c \ |\ \mathcal{F}_{n-1}\big) & \Cov\big(Y_{n}\wedge c, H_n-Y_n\wedge c\big)\\
		\Cov\big(Y_{n}\wedge c, H_n-Y_n\wedge c\big)& \Var\big(H_n-Y_n\wedge c\ |\ \mathcal{F}_{n-1}\big)
		\end{array}\right).
		\end{equation}
	\end{lemma}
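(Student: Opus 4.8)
The plan is to read off the decomposition directly from the recursion \eqref{def:AB} via the elementary discrete-time Doob decomposition, and then to pass to the renormalized process. First I would write, for the non-renormalized chain and for each $n\le N$, the telescoping identity $X_n=X_0+\sum_{k=1}^{n}(X_k-X_{k-1})$ and split each increment as $X_k-X_{k-1}=\E[X_k-X_{k-1}\mid\mathcal{F}_{k-1}]+\big(X_k-X_{k-1}-\E[X_k-X_{k-1}\mid\mathcal{F}_{k-1}]\big)$. By \eqref{def:AB} together with $Y_k=H_k+K_k$, the increment equals $\big(Y_k\wedge c-\ind_{A_{k-1}\ge 1},\ H_k-Y_k\wedge c\big)$, so that
\begin{equation*}
\E[X_k-X_{k-1}\mid\mathcal{F}_{k-1}]=\big(\E[Y_k\wedge c\mid\mathcal{F}_{k-1}]-\ind_{A_{k-1}\ge 1},\ \E[H_k-Y_k\wedge c\mid\mathcal{F}_{k-1}]\big),
\end{equation*}
which is $\mathcal{F}_{k-1}$-measurable, hence predictable. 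Summing over $k$, dividing by $N$ and applying the time change $n=\lfloor Nt\rfloor$ produces exactly $\Delta^N$ as in \eqref{eq:DeltaN}, while the complementary sum of centered increments, rescaled by $1/N$, defines $M^N$; it is predictable with finite variation and the former is a martingale by construction.

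Next I would check that $M^N$ is a square-integrable $(\mathcal{F}^N_t)$-martingale: each increment $Y_k\wedge c-\ind_{A_{k-1}\ge 1}$ lies in $[-1,c]$ and $H_k-Y_k\wedge c$ is bounded in absolute value by $N+c$ since $H_k\in\{0,\dots,N\}$; as the sum over $k$ runs only up to $N$, $M^N$ is bounded on $[0,1]$, hence square-integrable, and it is centered because each summand has zero conditional mean given $\mathcal{F}_{k-1}$.

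For the bracket, I would use that $\langle M^N\rangle$ is by definition the predictable compensator of $M^N(M^N)^{\top}$, and that for a discrete martingale written as a sum of increments $\xi_k:=\tfrac1N\big(X_k-X_{k-1}-\E[X_k-X_{k-1}\mid\mathcal{F}_{k-1}]\big)$ this compensator is $\sum_{k=1}^{\lfloor Nt\rfloor}\E[\xi_k\xi_k^{\top}\mid\mathcal{F}_{k-1}]$, i.e. $\tfrac1{N^2}$ times the sum of the conditional covariance matrices of the increments $\big(Y_k\wedge c-\ind_{A_{k-1}\ge1},\,H_k-Y_k\wedge c\big)$. Since $\ind_{A_{k-1}\ge1}$ and $\E[H_k-Y_k\wedge c\mid\mathcal{F}_{k-1}]$ are $\mathcal{F}_{k-1}$-measurable, they do not affect these conditional (co)variances, and one is left precisely with the conditional covariance matrix of $\big(Y_k\wedge c,\,H_k-Y_k\wedge c\big)$, which is the matrix appearing in \eqref{eq:bracketM}.

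I do not expect a genuine obstacle here: the argument is the standard discrete Doob decomposition followed by the diffusive rescaling. The only points requiring care are bookkeeping ones — keeping the $\mathcal{F}_{k-1}$-measurable shift terms inside $\Delta^N$ and out of both the martingale part and the conditional covariances, and recording the elementary fact that adding an $\mathcal{F}_{k-1}$-measurable quantity leaves conditional variances and covariances unchanged — together with the one-line justification of square-integrability from the finiteness of the population of size $N$.
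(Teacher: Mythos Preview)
Your proposal is correct and follows essentially the same route as the paper: write the telescoping sum, split each increment into its $\mathcal{F}_{k-1}$-conditional mean and the centered remainder, rescale by $1/N$, and read off the bracket as the sum of conditional covariance matrices of the increments (using that the $\mathcal{F}_{k-1}$-measurable shift $\ind_{A_{k-1}\ge1}$ drops out). One slip to fix in your write-up: in the sentence ``it is predictable with finite variation and the former is a martingale by construction'' you have inadvertently swapped the roles of $\Delta^N$ and $M^N$.
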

	
	Notice that the quantities in \eqref{eq:DeltaN} and \eqref{eq:bracketM} can be computed as functions of $A^N_{t_{n-1}}$ and $B^N_{t_{n-1}}$ for $n\in \{1,..., N\}$:\\
	\begin{equation}\label{eq:E(Y_n-wedge-c)}
	\E[ Y_n\wedge c \ |\ \mathcal{F}_{n-1}] = c- \sum_{k=0}^{c-1} (c-k)  \mathbb{P}(Y_n=k|\mathcal{F}_{n-1})
	\end{equation}where\\
	\begin{align}\label{eq:P(Y=k)}
	\mathbb{P}(Y_n=k|\mathcal{F}_{n-1}) =& \displaystyle\dfrac{(N-Nt_{n-1} - NA_{t_{n-1}}^N-1)!}{(N-Nt_{n-1}-NA_{t_{n-1}}^N-1-k)!N^k}\dfrac{\lambda^k}{k! } \nonumber\\
	&\times \left(  1- \dfrac{\lambda}{N} \right)^{N(1-t_{n-1}-A_{t_{n-1}}^N)}\left( 1-\dfrac{\lambda}{N} \right)^{-k-1},
	\end{align}
	and
	\begin{equation}\label{eq:16}
	\mathbb{E}[H_n|\mathcal{F}_{n-1}]= \lambda\left(1- \frac{n}{N} -A^N_{t_{n-1}} - B^N_{t_{n-1}} \right).
	\end{equation}
	For the bracket in \eqref{eq:bracketM}, the terms can be computed from:\\
	\begin{align}
	& \mathbb{E}\left[\left(Y_n\wedge c\right)^2\big| \mathcal{F}_{n-1}\right] = c^2 + \sum_{k=0}^{c} (k^2-c^2) \mathbb{P}(Y_n=k| \mathcal{F}_{n-1});\label{EY_i^2}\\
	&
	\mathbb{E}\left[Y_n\wedge c\big| \mathcal{F}_{n-1}\right]^2 = \left(c + \sum_{k=0}^{c} (k-c) \mathbb{P}(Y_n=k| \mathcal{F}_{n-1})\right)^2;\label{E^2Y_i}\\
	&
	\text{Var}(H_n|\mathcal{F}_{n-1})= \lambda \left( 1 - \frac{n}{N} - \frac{A_{n-1}}{N} - \frac{B_{n-1}}{N} \right)  \left(1 - \dfrac{\lambda}{N} \right);\label{VarY_i}
	\end{align}and\\
	\begin{align}
	&\mathbb{E}[H_n (Y_n \wedge c)|\mathcal{F}_{n-1}] = \displaystyle \sum_{k=0}^{N-n-A_{n-1}} (k \wedge c) \mathbb{E}(H_n|Y_n=k) \mathbb{P}(Y_n=k|\mathcal{F}_{n-1})\nonumber\\
	& = \dfrac{N-n-A_{n-1}-B_{n-1}}{N-n-A_{n-1}}\left[\displaystyle \sum_{k=0}^{c}k^2 \mathbb{P}(Y_n=k| \mathcal{F}_{n-1}) +
	\displaystyle \sum_{k=c+1}^{N-n-A_{n-1}}ck \mathbb{P}(Y_n=k| \mathcal{F}_{n-1})  \right]\nonumber\\
	& = \left(1- \dfrac{B_{n-1}}{N-n-A_{n-1}}\right) \left[\displaystyle \sum_{k=0}^{c} (k^2-ck) \mathbb{P}(Y_n=k| \mathcal{F}_{n-1}) + c \mathbb{E}[Y_n| \mathcal{F}_{n-1}] \right]\nonumber\\
	& = \left(1- \dfrac{B_{n-1}}{N-n-A_{n-1}}\right) \left[\displaystyle \sum_{k=0}^{c} (k^2-ck) \mathbb{P}(Y_n=k| \mathcal{F}_{n-1}) + c\lambda\left(1- \frac{n}{N} - \frac{A_{n-1}}{N} \right)\right]. \label{EH_iY_i}
	\end{align}

	\begin{proof}Since the components of $X^N$ take their values in $[0,1]$, the process $X^N$ is clearly square integrable.
		It is classical to write $X_t^N$ as\\
		\begin{align*}
		X_t^N & = X_0^N + \dfrac{1}{N} \displaystyle\sum_{n=1}^{\lfloor Nt \rfloor} (X_n-X_{n-1})\\
		&= X_0^N + \dfrac{1}{N } \displaystyle\sum_{n=1}^{\lfloor Nt \rfloor} \mathbb{E}[X_n-X_{n-1}|\mathcal{F}_{n-1}] \\
		& \hspace{3cm}+ \dfrac{1}{N} \sum_{n=1}^{\lfloor Nt \rfloor} (X_n-X_{n-1} - \mathbb{E}[X_n-X_{n-1}|\mathcal{F}_{n-1}]).
		\end{align*}
		Let us call $\Delta_t^N$ the second term in the right hand side, and $M_t^N$ the third term. We will prove that $\Delta^N$ is an $\mathcal{F}^N_t$-predictable finite variation process and that $M^N$ is a square integrable martingale.\\
		
		Let us first consider $(\Delta^N_t)_{0\leq t \leq 1}$. From \eqref{def:AB}, we have that for the first component:\\
		\begin{equation*}
		A_n-A_{n-1} = Y_n \wedge c - \ind_{\{A_{n-1}\geq 1\}},\quad B_n-B_{n-1}= H_n- Y_n \wedge c.
		\end{equation*}Moreover, for each $n \in \{1, ..., N\}$, $\mathbb{E}[X_n - X_{n-1}|\mathcal{F}_{n-1}]$ is $\mathcal{F}_{n-1}$-measurable. Hence, $\Delta^N_t$ is $\mathcal{F}_{\lfloor Nt \rfloor -1}$-measurable.
		The total variation of $\Delta^N$ is:\\
		\begin{align*}
		V(\Delta^N_t) &= \sum_{n=1}^{\lfloor Nt\rfloor} \|\Delta^N_{t_{n}}-\Delta^N_{t_{n-1}}\|\\
		&=\frac{1}{N}\sum_{n=1}^{\lfloor Nt\rfloor} |\E[A_n-A_{n-1}\ |\ \mathcal{F}_{n-1}] |+|\E[B_n-B_{n-1}\ |\ \mathcal{F}_{n-1}]|\\
		&\leq (2c+\lambda)t<+\infty,
		\end{align*}
		by using \eqref{def:AB}, as $Y_n\wedge c\leq c$ and $\E[H_n\ |\ \mathcal{F}_{n-1}]\leq \lambda$.\\
		
		Furthermore, using \eqref{def:AB}, we can recover the expression \eqref{eq:DeltaN} of $\Delta^N$ announced in the lemma as:\\
		\begin{align*}
		\mathbb{E}[Y_n \wedge c | \mathcal{F}_{n-1}]= & \displaystyle\sum_{k=0}^{c} k  \mathbb{P}(Y_n=k|\mathcal{F}_{n-1}) + c \mathbb{P}(Y_n> c|\mathcal{F}_{n-1}) \\
		& = \displaystyle\sum_{k=0}^{c} k  \mathbb{P}(Y_n=k|\mathcal{F}_{n-1}) + c\big( 1 - \mathbb{P} (Y_n \leq c| \mathcal{F}_{n-1})\big)\\
		& = c - \displaystyle\sum_{k=0}^{c} (c-k)  \mathbb{P}(Y_n=k|\mathcal{F}_{n-1}),
		\end{align*}
		where\\ 
		\begin{align*}
		\mathbb{P}(Y_n=k | \mathcal{F}_{n-1}) & = \displaystyle\binom{N-Nt_n - NA_{t_n}^N-1}{k} \Big(\dfrac{\lambda}{N}\Big)^k \Big(1- \dfrac{\lambda}{N} \Big)^{N-Nt_n-NA_{t_n}^N-1 -k}\\
		&= \displaystyle\dfrac{(N-Nt_n - NA_{t_n}^N-1)!}{(N-Nt_n-NA_{t_n}^N-1-k)!k!} \Big(\dfrac{\lambda}{N}\Big)^k \Big(1- \dfrac{\lambda}{N} \Big)^{N-Nt_n-NA_{t_n}^N-1 -k}\\
		& = \displaystyle\dfrac{(N-Nt_n - NA_{t_n}^N-1)!}{(N-Nt_n-NA_{t_n}^N-1-k)!N^k}\left( 1-\dfrac{\lambda}{N} \right)^{-k-1} \\
		& \hspace{3cm}\times \left(  1- \dfrac{\lambda}{N} \right)^{N (1-t_n-A_{t_n}^N)} \dfrac{\lambda^k}{k! }.
		\end{align*}

		
		Let us now show that $(M^N_t)_{0\leq t \leq 1}$ is a bounded $\mathcal{F}^N_t$-martingale and let us compute its quadratic integration process. For every $t \in [0,1]$, $M^N_t$ is $ \mathcal{F}^N_t$-measurable and bounded and hence square integrable:\\
		\[
		|M^N_t| = \left|X^N_t-X^N_0-\Delta^N_t\right|\leq 2+(2c+\lambda)t\leq 2+2c+\lambda<+\infty.
		\]
		For all $s<t$,\\
		$$\begin{array}{ll}
		\mathbb{E}[M_t^N|\mathcal{F}^N_s] 
		&=  \displaystyle\mathbb{E}\left[ \dfrac{1}{N}\sum_{n=\lfloor Ns \rfloor +1}^{\lfloor Nt \rfloor}(X_n - X_{n- 1} - \mathbb{E}[X_{n} - X_{n - 1}|\mathcal{F}_{n-1}])\bigg| \mathcal{F}_{\lfloor Ns \rfloor}] \right]\\
		& \quad \quad+ \quad \displaystyle \mathbb{E}\left[ \dfrac{1}{N} \sum_{n=1}^{\lfloor Ns \rfloor} (X_n-X_{n-1} - \mathbb{E}[X_n-X_{n-1}|\mathcal{F}_{n-1}])\bigg|\mathcal{F}_{\lfloor Ns \rfloor}\right]\\
		& =\displaystyle \dfrac{1}{N} \sum_{n=1}^{\lfloor Ns \rfloor} (X_n-X_{n-1} - \mathbb{E}[X_n-X_{n-1}|\mathcal{F}_{n-1}]) = M^N_s.
		\end{array}$$
		Then $M^N_t$ is an $(\mathcal{F}_t^N)$-martingale.
		
		Let us denote $X^1_n = A_n$ and $X^2_n = B_n$. The quadratic variation process is defined as:\\
		\begin{equation}
		\langle M^N \rangle_t =
		\begin{bmatrix}
		\langle M^{N,1}, M^{N,1} \rangle_t & \langle M^{N,1}, M^{N,2} \rangle_t\\
		\langle M^{N,2}, M^{N,1} \rangle_t & \langle M^{N,2}, M^{N,2} \rangle_t
		\end{bmatrix},
		\end{equation}
		where for $k,\ell\in \{1,2\}$,\\
		\begin{align}\label{bracket_of_martigales}
		\langle M^{N,k}, M^{N,\ell} \rangle_t&= \dfrac{1}{N^2} \sum_{n=1}^{\lfloor Nt \rfloor} \left\{\mathbb{E}\left[(X^k_n- X^k_{n-1})(X^\ell_n- X^\ell_{n-1})\big| \mathcal{F}_{n-1}\right]\right. \nonumber\\
		&\hspace{1cm}- \left.\mathbb{E}\left[(X^k_n- X^k_{n-1})|\mathcal{F}_{n-1}\right] \mathbb{E}\left[(X^\ell_n- X^\ell_{n-1})|\mathcal{F}_{n-1}\right]\right\}.
		\end{align}
		Using \eqref{def:AB}, we have:\\
		\begin{align}
		\langle M^{N,1} \rangle_t& = \dfrac{1}{N^2} \displaystyle\sum_{n=1}^{\lfloor Nt \rfloor} \mathbb{E}\left[\left(A_n- A_{n-1} - \mathbb{E}[A_n - A_{n-1}| \mathcal{F}_{n-1}] \right)^2\big| \mathcal{F}_{n-1}\right]\nonumber\\
		& = \dfrac{1}{N^2} \displaystyle\sum_{n=1}^{\lfloor Nt \rfloor} \text{Var}(Y_n \wedge c | \mathcal{F}_{n-1}) \leq \dfrac{c^2}{N}.\label{eq:crochetM1}
		\end{align}
		Proceeding similarly for the other terms, we obtain\\
		\begin{align}
		&\langle M^{N,2} \rangle_t
		= \dfrac{1}{N^2} \displaystyle\sum_{n=1}^{\lfloor Nt \rfloor} \text{Var}(H_n-Y_n \wedge c | \mathcal{F}_{n-1}) \leq \dfrac{\lambda}{N}, \nonumber\\
		&\langle M^{N,1},M^{N,2}\rangle_t  = \frac{1}{N^2} \sum_{n=1}^{\lfloor Nt\rfloor} \Cov(Y_n\wedge c, H_n-Y_n\wedge c\ |\ \mathcal{F}_{n-1})\leq \dfrac{c\sqrt{\lambda}}{N}.\label{eq:crochetM2-M1M2}
		\end{align}
		This finishes the proof of the Lemma.
	\end{proof}

	\subsection{Tightness of the renormalized process}
	
	\begin{lemma}\label{lem:tight}
		The sequence $(X^N)_{N\geq 1}$ is tight in $\mathcal{D}([0,1],[0,1]^2)$.
	\end{lemma}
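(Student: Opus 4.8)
The plan is to invoke the Aldous--Rebolledo tightness criterion (see e.g. \cite{jakubowski}, or Joffe--M\'etivier) applied to the Doob decomposition $X^N_t = X^N_0 + \Delta^N_t + M^N_t$ furnished by Lemma \ref{lem:doobsdecompositionofX}. Recall that this criterion guarantees tightness in $\mathcal{D}([0,1],[0,1]^2)$ as soon as: (i) for each fixed $t\in[0,1]$ the family of laws of $X^N_t$ is tight; (ii) the predictable finite-variation parts $\Delta^N$ satisfy the Aldous condition, i.e. $\lim_{\delta\to 0}\limsup_N \sup \mathbb{P}(\|\Delta^N_T-\Delta^N_S\|\geq\eta)=0$, the supremum being over $(\mathcal{F}^N_t)$-stopping times $S\leq T\leq (S+\delta)\wedge 1$; and (iii) the predictable quadratic variations $\langle M^N\rangle$ satisfy the same Aldous condition. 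The point of the argument is that all three verifications reduce to estimates already obtained in the proof of Lemma \ref{lem:doobsdecompositionofX}.

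Point (i) is immediate, since $X^N_t$ takes values in the compact set $[0,1]^2$ for every $N$ and $t$, and $(X^N_0)_N$ is tight by Assumption \ref{assptn:initialER}. For (ii), I would use the termwise bound on the increments of $\Delta^N$ extracted from the total-variation estimate in Lemma \ref{lem:doobsdecompositionofX}, namely $\|\Delta^N_{t_n}-\Delta^N_{t_{n-1}}\|\leq (2c+\lambda)/N$. For any pair of stopping times $S\leq T\leq(S+\delta)\wedge 1$ this yields the \emph{deterministic} bound
\begin{equation*}
\|\Delta^N_T-\Delta^N_S\|\ \leq\ (2c+\lambda)\,\frac{\lfloor NT\rfloor-\lfloor NS\rfloor}{N}\ \leq\ (2c+\lambda)\Big(\delta+\frac1N\Big),
\end{equation*}
so that for $\delta<\eta/\big(2(2c+\lambda)\big)$ and $N$ large the probability in (ii) is exactly $0$; the Aldous condition for $\Delta^N$ follows.

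For (iii), I would observe that $\langle M^N\rangle$ in fact converges uniformly to $0$, which makes the Aldous condition trivial. Indeed, by \eqref{eq:crochetM1}--\eqref{eq:crochetM2-M1M2} the diagonal entries of $\langle M^N\rangle_t$ are nondecreasing in $t$ and bounded by $c^2/N$ and $\lambda/N$ respectively, while, applying Cauchy--Schwarz termwise to the covariances, the off-diagonal entry has total variation at most $c\sqrt{\lambda}/N$; hence $\|\langle M^N\rangle_T-\langle M^N\rangle_S\|\leq (c+\sqrt{\lambda})^2/N$ uniformly over $S\leq T$, which tends to $0$. Combining (i)--(iii), the Aldous--Rebolledo criterion gives the tightness of $(X^N)_N$ in $\mathcal{D}([0,1],[0,1]^2)$. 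As a byproduct, since $\mathbb{E}[\langle M^N\rangle_1]=O(1/N)$, Doob's $L^2$ inequality shows $\sup_{t\in[0,1]}\|M^N_t\|\to 0$ in probability, a fact that will be used to identify the limiting ODE.

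I do not anticipate a genuine obstacle: the quantitative estimates required are all contained in Lemma \ref{lem:doobsdecompositionofX}. The only points needing mild care are the discretization effects (keeping track of the floor functions $\lfloor N\cdot\rfloor$) and the fact that the increments must be controlled uniformly over stopping times $S\leq T$ — which is painless here because the relevant bounds are pathwise — and the off-diagonal bracket term, being non-monotone, must be handled via its total variation rather than directly, but the Cauchy--Schwarz estimate makes this routine.
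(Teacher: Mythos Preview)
Your proposal is correct and follows essentially the same route as the paper: Aldous--Rebolledo applied to the Doob decomposition, with compactness of $[0,1]^2$ giving (i), the per-step bound $(2c+\lambda)/N$ on the increments of $\Delta^N$ giving (ii), and the $O(1/N)$ estimates \eqref{eq:crochetM1}--\eqref{eq:crochetM2-M1M2} on the brackets giving (iii). The only cosmetic differences are that the paper phrases Aldous via the modulus of continuity rather than stopping times, and keeps a factor $|t-s|$ in the bracket estimates (obtaining $\frac{C}{N}|t-s|$) rather than your uniform $O(1/N)$ bound; your explicit tracking of the floor-function correction $+1/N$ is in fact slightly more careful than the paper's inequality $\|\Delta^N_t-\Delta^N_s\|\leq(2c+\lambda)|t-s|$.
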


	\begin{proof}The proof of tightness is based on the classical criterion of Aldous-Rebolledo (\cite[Theorem 2.3.2]{metivier} and its Corollary 2.3.3). For this we have to check that finite distributions are tight, and control the modulus of continuity of the sequence of finite variation parts and of quadratic variation of the martingale parts.\\
		
		For each $t\in [0,1]$, $|A_t^N| + |B_t^N| \leq 2$, implying that $(A_t^N,B_t^N)$ is tight for every $t\in [0,1]$.
		
		Let $0 \leq s, t \leq 1$,\\
		\begin{align*}
		\| \Delta_{t}^N - \Delta_{s}^N \| & = |\Delta^{N,1}_t-\Delta^{N,1}_s|+ |\Delta^{N,2}_t-\Delta^{N,2}_s| \\
		& \leq \dfrac{1}{N} \displaystyle \sum_{n=\lfloor Ns \rfloor +1}^{\lfloor Nt \rfloor} \left( \left|\mathbb{E} \left[A_n-A_{n-1}|\mathcal{F}_{n-1} \right]\right| + \left|\mathbb{E} \left[B_n-B_{n-1}|\mathcal{F}_{n-1} \right]\right|\right)\\
		& \leq (2c+ \lambda)|t-s|.
		\end{align*}
		Thus, for each positive $\varepsilon$ and $\eta$, there exists $\delta_0 =\dfrac{\varepsilon \eta}{2c+\lambda}$ such that for all $0<\delta<\delta_0$,\\
		\begin{equation}
		\mathbb{P}\left(\sup_{\substack{|t-s|\leq \delta\\ 0\leq s,t \leq 1}} \left| \Delta_{t}^N - \Delta_{s}^N \right|> \eta \right) \leq \dfrac{1}{\eta} \mathbb{E} \left[\sup_{\substack{|t-s|\leq \delta\\ 0\leq s,t \leq 1}} \left| \Delta_{t}^N - \Delta_{s}^N \right|\right] \leq \dfrac{(2c+\lambda)\delta}{\eta} \leq \varepsilon, \quad \forall N\geq 1.
		\end{equation}
		By Aldous criterion, this provides the tightness of $(\Delta^N)_{N\in \N}$.
		
		Similarly, for the quadratic variations of the martingale parts, using \eqref{eq:crochetM1} and \eqref{eq:crochetM2-M1M2}, we have for all $0\leq s< t \leq 1$,
		\begin{align*}
		\left|\langle M^{N,1} \rangle_t- \langle M^{N,1} \rangle_s\right|
		&=\displaystyle \dfrac{1}{N^2} \sum_{n=\lfloor Ns \rfloor +1 }^{\lfloor Nt \rfloor} \text{Var} \left( Y_n\wedge c \big| \mathcal{F}_{n-1} \right) \leq \dfrac{c^2}{N}|t-s|; \\
		\left|\langle M^{N,2} \rangle_t- \langle M^{N,2} \rangle_s \right|
		& = \displaystyle\dfrac{1}{N^2} \sum_{n=\lfloor Ns \rfloor +1 }^{\lfloor Nt \rfloor} \text{Var} \left( H_n-Y_n\wedge c \big| \mathcal{F}_{n-1} \right) \\
		&\leq \dfrac{2(\lambda+c^2)}{N}|t-s| ;\\
		\left|\langle M^{N,1}, M^{N,2} \rangle_t - \langle M^{N,1}, M^{N,2} \rangle_s \right|&\leq  \displaystyle\dfrac{1}{N^2} \sum_{n=\lfloor Ns \rfloor+1}^{\lfloor Nt \rfloor} \left(\text{Var}(Y_n \wedge c| \mathcal{F}_{n-1})\right)^{1/2}\\
		& \quad \quad \quad \quad \quad \quad \times\left(\text{Var}(H_n-Y_n \wedge c| \mathcal{F}_{n-1})\right)^{1/2}\\
		& \leq \dfrac{c(\sqrt{\lambda}+c)}{N}|t-s|.
		\end{align*}
		Thus, using the matrix norm on $\mathcal{M}_{2\times 2}(\R)$ associated with $\|.\|_1$ on $\R^2$,\\
		\begin{align}
		\displaystyle\sup_{\substack{|t-s|\leq \delta\\ 0\leq s,t \leq 1}} \| \langle M^{N} \rangle_t  - \langle M^{N} \rangle_s \| &\leq \displaystyle \sup_{\substack{|t-s|\leq \delta\\ 0\leq s,t \leq 1}} \bigg(\big|\langle M^{N,1} \rangle_t  - \langle M^{N,1} \rangle_s \big| + \big| \langle M^{N,2} \rangle_t  - \langle M^{N,2} \rangle_s\big| \nonumber\\
		& \quad \quad \quad \quad \quad \quad + 2 \big|\langle M^{N,1},M^{N,2} \rangle_t  - \langle M^{N,1},M^{N,2} \rangle_s \big|\bigg)\nonumber\\
		& \leq \dfrac{c^2 + 4(\lambda+c^2) +c(\sqrt{\lambda}+c)}{N}\delta .\label{etape1}
		\end{align}
		Consequently, for any $\varepsilon>0, \eta>0$, choose $\delta$ such that $\dfrac{c^2 + 4(\lambda+c^2) +c(\sqrt{\lambda}+c)}{\eta N}\delta < \varepsilon,$ we have
		$$ \mathbb{P}\left(\sup_{\substack{|t-s|<\delta\\  0\leq s,t \leq 1}}|\langle M^N \rangle_t -\langle M^N \rangle_s| > \eta\right)< \varepsilon, \quad \forall N\geq 1,
		$$
		which implies that $\langle M^N \rangle$ is also tight. This achieves the proof of the Lemma.
	\end{proof}
	
	\subsection{Identification of the limiting values}
	
	Since $(X^N)_{N \geq 1}$ is tight, there exists a subsequence $(\ell_N)_{N \geq 1}$ in $\mathbb{N}$ such that $(X^{\ell_N})_{N\geq 1}= (A^{\ell_N},B^{\ell_N})_{N\geq 1}$ converges in distribution in $\mathcal{D}([0,1], [0,1]^2)$ to a limiting value $(\bar{a}, \bar{b}) \in \mathcal{D}([0,1],[0,1]^2)$ (\eg \cite{billingsley}). We now want to identify that limiting value.
	
	\begin{proposition}\label{CV_of_Martingale}
		The sequence of martingales $(M^N)_{N\geq 1}$ converges uniformly to $0$ in probability when $N \rightarrow \infty$.
	\end{proposition}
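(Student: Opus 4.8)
The plan is to deduce the uniform convergence from the smallness of the quadratic variation of $M^N$ obtained in Lemma \ref{lem:doobsdecompositionofX}, via Doob's maximal inequality. Work componentwise: since the $L^1$-norm on $\R^2$ satisfies $\|M^N_t\|=|M^{N,1}_t|+|M^{N,2}_t|$, it suffices to show that $\sup_{t\in[0,1]}|M^{N,k}_t|\to 0$ in probability for $k=1,2$. Each $M^{N,k}$ is a càdlàg, piecewise constant, square-integrable $(\mathcal F^N_t)$-martingale started at $0$, so Doob's $L^2$ inequality gives
\[
\E\Big[\sup_{t\in[0,1]}\big(M^{N,k}_t\big)^2\Big]\leq 4\,\E\big[(M^{N,k}_1)^2\big]=4\,\E\big[\langle M^{N,k}\rangle_1\big].
\]

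Next I would plug in the bounds already established in \eqref{eq:crochetM1} and \eqref{eq:crochetM2-M1M2} (and the sharper one used in the tightness proof for the second component), namely $\langle M^{N,1}\rangle_1\leq c^2/N$ and $\langle M^{N,2}\rangle_1\leq 2(\lambda+c^2)/N$ almost surely. Hence $\E[\sup_{t}(M^{N,1}_t)^2]\leq 4c^2/N$ and $\E[\sup_{t}(M^{N,2}_t)^2]\leq 8(\lambda+c^2)/N$, both of which tend to $0$ as $N\to\infty$; in particular $M^N\to 0$ in $L^2$, uniformly on $[0,1]$.

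Finally, I would convert this into convergence in probability of the supremum: for any $\eta>0$, by the union bound and Markov's inequality,
\[
\P\Big(\sup_{t\in[0,1]}\|M^N_t\|>\eta\Big)\leq \P\Big(\sup_{t}|M^{N,1}_t|>\tfrac{\eta}{2}\Big)+\P\Big(\sup_{t}|M^{N,2}_t|>\tfrac{\eta}{2}\Big)\leq \frac{4}{\eta^2}\Big(\frac{4c^2}{N}+\frac{8(\lambda+c^2)}{N}\Big)\xrightarrow[N\to\infty]{}0,
\]
which is exactly the claimed uniform convergence to $0$ in probability. There is no genuine obstacle here: the only points requiring a line of justification are that $M^N$ is a true $(\mathcal F^N_t)$-martingale with the stated bracket (both proved in Lemma \ref{lem:doobsdecompositionofX}) and that Doob's inequality applies to the discrete-time martingale sampled along $t_n=n/N$; the estimate is otherwise immediate from the $O(1/N)$ control on $\langle M^N\rangle$.
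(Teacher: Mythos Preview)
Your proof is correct and follows essentially the same route as the paper: bound the bracket $\langle M^N\rangle$ by a quantity of order $1/N$, apply Doob's $L^2$ maximal inequality, and conclude via Markov/Chebyshev. The only cosmetic difference is that you work componentwise while the paper bounds the full matrix norm $\|\langle M^N\rangle_t\|$ at once, yielding slightly different constants but the same argument.
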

	
	\begin{proof}
		With a computation similar the one leading to \eqref{etape1}, we get\\
		\begin{equation} \label{bound_of_brackets}
		\| \langle M \rangle_t \| \leq |\langle M^{N,1} \rangle_t| + |\langle M^{N,2} \rangle_t| + 2|\langle M^{N,1} \rangle_t|^{1/2} |\langle M^{N,2} \rangle_t|^{1/2} \leq \dfrac{(6c^2+ 4\lambda)t}{N}
		\end{equation}
		By Doob's inequality,\\
		$$ \mathbb{E}[\sup_{t \in [0,1]} \|M_t^{N}\|^2] \leq 4 \mathbb{E}[\| \langle M \rangle_1 \|] \leq 4\dfrac{6c^2+ 4\lambda}{N}.$$
		For every $\varepsilon > 0$,\\
		$$ \lim_{N \rightarrow \infty} \mathbb{P}\left( \sup_{t \in [0,1]} \|M_t^{N}\|^2 > \varepsilon\right) \leq \lim_{N \rightarrow \infty} \dfrac{1}{\varepsilon} \mathbb{E}[\sup_{t \in [0,1]} \|M_t^{N}\|^2] \leq \lim_{N \rightarrow \infty} \dfrac{4(6c^2+ 4\lambda)}{\varepsilon N}=0.
		$$
	\end{proof}
	The remaining work is figuring out the limit of finite variation part $\Delta^N$.
	Let us recall that\\
	\begin{align*}
	f_1(t,a)&:= c-\sum_{k=0}^{c-1}(c-k)p_k(t+a) - \ind_{a_t>0}\\
	f_2(t,a,b)&:= (1-t-a-b)\lambda +\sum_{k=0}^{c-1}(c-k)p_k(t+a)-c.
	\end{align*}
	and\\
	\begin{equation}\label{def:F}
	f(t,a,b):=\left(\begin{array}{c}f_1(t,a)\\ f_2(t,a,b)\end{array}\right)
	\end{equation}the r.h.s. of \eqref{chap1:eqf1}-\eqref{chap1:eqf2}, where $p_k(x)$ is the function defined in \eqref{eq:pk}.
	
	\begin{proposition}\label{CV_of_DeltaN}
		There exists a constant $C=C(\lambda,c)>0$ such that for all $N \geq 1$,\\
		\begin{equation} \label{bound-of-Delta}
		\sup_{t\in [0,1]} \Big\|\Delta^N_t- \frac{1}{N}\sum_{n=1}^{\lfloor Nt\rfloor} f\bigg(\frac{n-1}{N},\frac{A_{n-1}}{N},\frac{B_{n-1}}{N}\bigg) \Big\|\leq \frac{C}{N}
		\end{equation}
	\end{proposition}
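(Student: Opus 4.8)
The claim is that the predictable finite-variation part $\Delta^N_t$, whose increments are given explicitly in \eqref{eq:DeltaN}, is uniformly within $O(1/N)$ of the Riemann-type sum of the drift $f$ evaluated along the rescaled trajectory. Since both $\Delta^N_t$ and the comparison sum are indexed by $\lfloor Nt\rfloor$ and share the same $1/N$ prefactor, it suffices to bound, uniformly in $n\in\{1,\dots,N\}$ and on the event where $X_{n-1}$ takes an admissible value, the single-step discrepancy
\begin{equation*}
\Big\|\big(\E[Y_n\wedge c\mid\mathcal F_{n-1}]-\ind_{A_{n-1}\geq 1},\ \E[H_n-Y_n\wedge c\mid\mathcal F_{n-1}]\big)-f\Big(\tfrac{n-1}{N},\tfrac{A_{n-1}}{N},\tfrac{B_{n-1}}{N}\Big)\Big\|\leq \frac{C}{N},
\end{equation*}
because summing $\lfloor Nt\rfloor\leq N$ such terms against the $1/N$ prefactor gives the bound $C/N$ claimed (the extra factor $N$ in the number of terms cancels against... wait — one in fact gets the sum bounded by $\tfrac{1}{N}\cdot N\cdot\tfrac{C}{N}=\tfrac{C}{N}$, so the per-step error must itself be $O(1/N)$, which it is). I would also note at the outset that $\ind_{a_t>0}$ in the definition of $f_1$ matches $\ind_{A_{n-1}\geq 1}$ exactly (on the grid $A_{n-1}\geq 1\iff A_{n-1}/N>0$), so that term contributes nothing to the error and can be set aside.

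\emph{Step 1: control the Poisson approximation of $\P(Y_n=k\mid\mathcal F_{n-1})$.} Comparing the exact binomial point probability written out in \eqref{eq:P(Y=k)} with $p_k(t_{n-1}+A^N_{t_{n-1}})$ from \eqref{eq:pk}, I would show $|\P(Y_n=k\mid\mathcal F_{n-1})-p_k(t_{n-1}+A^N_{t_{n-1}})|\leq C(\lambda,c)/N$ for each fixed $k\in\{0,\dots,c-1\}$. This is the standard Poisson-limit estimate made quantitative: the falling-factorial ratio $\frac{(M-1)!}{(M-1-k)!N^k}$ with $M=N(1-t_{n-1}-A^N_{t_{n-1}})$ equals $\big(\tfrac{M}{N}\big)^k(1+O(1/N))=(1-t_{n-1}-A^N_{t_{n-1}})^k(1+O_k(1/N))$ uniformly (here one uses $M\leq N$), the factor $(1-\lambda/N)^{-k-1}=1+O(1/N)$, and $(1-\lambda/N)^{M}=e^{-\lambda M/N}(1+O(1/N))=e^{-\lambda(1-t_{n-1}-A^N_{t_{n-1}})}(1+O(1/N))$ by a Taylor expansion of $M\log(1-\lambda/N)=-\lambda M/N+O(M/N^2)$ with $M/N^2\leq 1/N$. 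Since $k$ ranges over the finite set $\{0,\dots,c-1\}$, the constant can be taken uniform in $k$.

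\emph{Step 2: assemble the drift of the first coordinate.} Using the identity \eqref{eq:E(Y_n-wedge-c)}, namely $\E[Y_n\wedge c\mid\mathcal F_{n-1}]=c-\sum_{k=0}^{c-1}(c-k)\P(Y_n=k\mid\mathcal F_{n-1})$, and the analogous $f_1(t,a)=c-\sum_{k=0}^{c-1}(c-k)p_k(t+a)$, Step 1 and the triangle inequality give $|\E[Y_n\wedge c\mid\mathcal F_{n-1}]-\ind_{A_{n-1}\geq1}-f_1(\tfrac{n-1}{N},\tfrac{A_{n-1}}{N})|\leq \sum_{k=0}^{c-1}(c-k)\cdot C/N\leq C'/N$.

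\emph{Step 3: assemble the drift of the second coordinate.} Here $\E[H_n-Y_n\wedge c\mid\mathcal F_{n-1}]=\E[H_n\mid\mathcal F_{n-1}]-\E[Y_n\wedge c\mid\mathcal F_{n-1}]$. The first piece is \emph{exact}: by \eqref{eq:16}, $\E[H_n\mid\mathcal F_{n-1}]=\lambda(1-\tfrac{n}{N}-A^N_{t_{n-1}}-B^N_{t_{n-1}})$, and $f_2$ contains precisely $\lambda(1-t-a-b)$ with $t=\tfrac{n-1}{N}$; the mismatch between $\tfrac{n}{N}$ and $\tfrac{n-1}{N}=t_{n-1}$ costs only $\lambda/N$. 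The second piece is controlled by Step 2 (with the $\ind$ term now absent, since $f_2$ has $+\sum(c-k)p_k-c=-\E[\cdot]$-type structure): $|\E[Y_n\wedge c\mid\mathcal F_{n-1}]-(c-\sum_{k=0}^{c-1}(c-k)p_k(\tfrac{n-1}{N}+\tfrac{A_{n-1}}{N}))|\leq C'/N$. Adding the two contributions yields the per-step bound $\leq C''/N$ for the second coordinate as well.

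\emph{Conclusion.} Combining Steps 2 and 3, each summand differs from $f(\tfrac{n-1}{N},\tfrac{A_{n-1}}{N},\tfrac{B_{n-1}}{N})$ in $\|\cdot\|$ by at most $C/N$; multiplying by $\tfrac1N$ and summing at most $N$ terms gives $\sup_{t\in[0,1]}\|\Delta^N_t-\tfrac1N\sum_{n=1}^{\lfloor Nt\rfloor}f(\tfrac{n-1}{N},\tfrac{A_{n-1}}{N},\tfrac{B_{n-1}}{N})\|\leq C/N$, as claimed.

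\textbf{Main obstacle.} The delicate point is making the Poisson approximation in Step 1 \emph{uniform} in $n$ and in the (random) value of $A^N_{t_{n-1}}$, i.e. uniform over all admissible arguments $z=t_{n-1}+A^N_{t_{n-1}}\in[0,1]$ including $z$ close to $1$, where the effective number of trials $M=N(1-z)$ may be small or even zero. When $M$ is bounded the binomial is genuinely not close to Poisson pointwise in the naive sense, but one checks that in that regime \emph{both} $\P(Y_n=k\mid\mathcal F_{n-1})$ and $p_k(z)$ are themselves $O(1/N)$-close to their common limit (or the relevant factorial ratio is literally $0$ for $k>M$, while $p_k(z)=p_k$ of an argument $\leq 1/N$ is $O(1/N)$), so the difference is still $O(1/N)$; handling this boundary case cleanly — rather than the bulk case $M\asymp N$ — is where the care is needed. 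Everything else is bookkeeping with finitely many ($k\leq c-1$) terms and elementary bounds $Y_n\wedge c\leq c$, $\E[H_n\mid\mathcal F_{n-1}]\leq\lambda$.
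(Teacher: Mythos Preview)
Your proposal is correct and follows essentially the same route as the paper: both reduce the problem to the quantitative Poisson approximation $|\P(Y_n=k\mid\mathcal F_{n-1})-p_k(t_{n-1}+A^N_{t_{n-1}})|\leq C(\lambda,c)/N$, obtained by separately estimating the falling-factorial ratio, the factor $(1-\lambda/N)^{-(k+1)}$, and the difference between $(1-\lambda/N)^{N(1-z)}$ and $e^{-\lambda(1-z)}$, and then sum over the finitely many $k\leq c-1$ and over $n\leq\lfloor Nt\rfloor$. You are in fact slightly more careful than the paper in two places: you note the exact cancellation of the indicator terms $\ind_{A_{n-1}\geq 1}=\ind_{A_{n-1}/N>0}$, and you flag the harmless $\lambda/N$ shift coming from $\E[H_n\mid\mathcal F_{n-1}]=\lambda(1-\tfrac{n}{N}-\cdots)$ versus $f_2$ evaluated at $t=\tfrac{n-1}{N}$, which the paper silently absorbs.
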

	
	\begin{proof}Recall the equations for $\Delta^N$ in \eqref{eq:DeltaN} and \eqref{eq:P(Y=k)}. Using \eqref{eq:16}, we have that:\\
		\begin{align}
		&\Big\| \Delta^N_t- \frac{1}{N}\sum_{n=1}^{\lfloor Nt \rfloor} f\big(\frac{n-1}{N},\frac{A_{n-1}}{N},\frac{B_{n-1}}{N}\big) \Big\| \nonumber\\
		\leq & \Big|\frac{1}{N}\sum_{n=1}^{\lfloor Nt\rfloor} \Big(c-\sum_{k=0}^c (c-k)\P\big(Y_n=k\ |\ \mathcal{F}_{n-1}\big)-\ind_{A_{n-1}\geq 1}\Big) \nonumber\\
		& \hspace{3cm}-\Big(c-\sum_{k=0}^c (c-k) p_k\big(\frac{n-1}{N}+\frac{A_{n-1}}{N}\big)-\ind_{\frac{A_{n-1}}{N}>0}\Big)\Big| \nonumber\\
		& + \Big|\frac{1}{N}\sum_{n=1}^{\lfloor Nt\rfloor} \Big(\E[H_n\ |\ \mathcal{F}_{n-1}]+\sum_{k=0}^c (c-k)\mathbb{P}\big(Y_n=k\ |\ \mathcal{F}_{n-1}\big)-c\Big) \nonumber\\
		& \hspace{1.5cm} -\Big(\lambda \big(1-\frac{n-1}{N}-\frac{A_{n-1}}{N}-\frac{B_{n-1}}{N}\big)-\sum_{k=0}^c (c-k)p_k\big(\frac{n-1}{N}+\frac{A_{n-1}}{N}\big)\Big) \Big| \nonumber\\
		\leq & \frac{2}{N}\sum_{n=1}^{\lfloor Nt\rfloor} \sum_{k=0}^c (c-k) \left|\P\big(Y_n=k\ |\ \mathcal{F}_{n-1}\big)- p_k\left(\frac{n-1}{N}+\frac{A_{n-1}}{N}\right)\right|.\label{etape2}
		\end{align}
		We are thus led to consider more carefully the difference between $\P(Y_n=k\ |\ \mathcal{F}_{n-1})$ and $p_k(t_{n-1}+A^N_{t_{n-1}})$. We have\\
		\begin{align*}
		&\displaystyle\dfrac{(N-Nt_{n-1} - NA_{t_{n-1}}^N-1)!}{(N-Nt_{n-1}-NA_{t_{n-1}}^N-1-k)!N^k}\\
		& = (1-t_{n-1}-A_{t_{n-1}}^N - \frac{1}{N})(1-t_{n-1}-A_{t_{n-1}}^N - \frac{2}{N})\cdots (1-t_{n-1}-A_{t_{n-1}}^N -\frac{k}{N})\\
		& = Q_k(1-t_{n-1}-A_{t_{n-1}}^N ),
		\end{align*}
		where for $k \leq c$,
		\[Q_k(x)=\prod_{n=1}^{k}(x-x_n)= \sum_{j=0}^{k} (-1)^{k-j}e_{k-j} x^j\]
		is a polynomial of degree $k$, with the notation \sloppy $x_{n} = n/N$, $e_0=1, e_j= \sum_{1\leq i_1<...<i_j \leq k}x_{i_1}...x_{i_j}$,$ 1\leq j\leq k$. Since\\
		\[|Q_k(x)-x^k|= \left|\sum_{j=0}^{k-1} (-1)^{k-j}e_{k-j} x^j\right|\leq\sum_{j=0}^{k-1} |e_{k-j}| |x^j|\leq \sum_{j=0}^{k-1} \Big(\dfrac{(k-1)}{N}\Big)^{k-j} |x^j|, \]
		this yields:\\
		\begin{align}\label{1term}
		&\left| \displaystyle\dfrac{(N-Nt_i - NA_{t_i}^N-1)!}{(N-Nt_i-NA_{t_i}^N-k-1)!N^k} - (1-t_i-A_{t_i}^N)^k \right| \nonumber\\
		\leq& \displaystyle\sum_{j=0}^{k-1} \left(\dfrac{k-1}{N} \right) ^{k-j}\leq \dfrac{ \sum_{\ell=1}^{k} (k-1)^\ell}{N}.
		\end{align}
		Secondly, we upper bound the difference between $(1-\lambda/N)^{N(1-t_{n-1}-A^N_{t_{n-1}})}$ and $\exp(-\lambda(1-t_{n-1}-A^N_{t_{n-1}}))$. Using a Taylor expansion, we obtain that:\\
		\begin{align*}
		\Big(1-\frac{\lambda}{N}\Big)^{N(1-t_{n-1}-A^N_{t_{n-1}})}& = \exp\Big(N(1-t_{n-1}-A^N_{t_{n-1}})\log\Big(1-\frac{\lambda}{N}\Big)\Big)\\
		&= \exp\Big(N(1-t_{n-1}-A^N_{t_{n-1}})\log\Big(1-\frac{\lambda}{N}\Big)\Big)\\
		&= e^{-\lambda(1-t_{n-1}-A^N_{t_{n-1}})}\exp\Big(-\big(\frac{\lambda^2}{2N}+r_N\big)(1-t_{n-1}-A^N_{t_{n-1}})\Big)
		\end{align*}where there exists some constant $C=C(\lambda)>0$ such that $0\leq r_N<C/N^3$. Using that for $x>0$, $1-x<e^{-x}<1$, we obtain that for some constant $C_0=C_0(\lambda)$,\\
		
		\begin{equation}\label{2term}
		0\leq  e^{-\lambda(1-t_n-A_{t_n}^N)} -\Big(1- \dfrac{\lambda}{N} \Big)^{N(1-t_n-A_{t_n}^N)} \leq \dfrac{C_0}{N}.
		\end{equation}
		Lastly, there exists a constant $C_1=C_1(c,\lambda)\geq 0$ such that\\
		\begin{align}\label{3term}
		1\leq \Big(1-\frac{\lambda}{N}\Big)^{-(k+1)}\leq 1+\frac{C_1}{N}.
		\end{align}Gathering \eqref{eq:P(Y=k)}, \eqref{1term}, \eqref{2term} and \eqref{3term}, there thus exists a constant $C_2=C_2(c,\lambda)$ such that\\
		\begin{equation}\label{proba}
		\left|\mathbb{P}(Y_n=k|\mathcal{F}_{n-1}) - p_k(t_{n-1}+A^N_{t_{n-1}}) \right|
		\leq \dfrac{C_2(\lambda,c)}{N}.
		\end{equation}
		As a result, from \eqref{etape2} and \eqref{proba} we have for some constant $C=C(\lambda,c)\geq 0$\\
		\begin{equation*}
		\Big\| \Delta^N_t- \frac{1}{N}\sum_{n=1}^{\lfloor Nt \rfloor} f\big(\frac{n-1}{N},\frac{A_{n-1}}{N},\frac{B_{n-1}}{N}\big) \Big\| \leq \frac{C(\lambda,c)}{N}.
		\end{equation*}
		This proves the proposition. \end{proof}

	\begin{corollary}The limiting values of $(X^N)_{N\geq 1}$ are solutions of \eqref{chap1:eqf1}-\eqref{chap1:eqf2}.
	\end{corollary}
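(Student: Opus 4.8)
The plan is to pass to the limit, along the convergent subsequence $(X^{\ell_N})_{N\ge1}$ with limit $\bar X=(\bar a,\bar b)\in\mathcal D([0,1],[0,1]^2)$, in the Doob decomposition $X^{\ell_N}=X^{\ell_N}_0+\Delta^{\ell_N}+M^{\ell_N}$ of Lemma~\ref{lem:doobsdecompositionofX}, and to identify the limit of the finite variation part with $\int_0^{\cdot}f(s,\bar X_s)\,ds$. Using Skorokhod's representation theorem I may assume that, on one probability space, $X^{\ell_N}\to\bar X$ almost surely in $\mathcal D$, that $X^{\ell_N}_0\to x_0$ a.s. (Assumption~\ref{assptn:initialER}) and that $\|M^{\ell_N}\|_\infty\to0$ a.s. (Proposition~\ref{CV_of_Martingale}). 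Since $\Delta^{\ell_N}=X^{\ell_N}-X^{\ell_N}_0-M^{\ell_N}$, and since the bound $\|\Delta^{\ell_N}_t-\Delta^{\ell_N}_s\|\le(2c+\lambda)|t-s|$ from the proof of Lemma~\ref{lem:doobsdecompositionofX} makes the piecewise linear interpolations of $\Delta^{\ell_N}$ equi-Lipschitz and uniformly bounded, a compactness argument together with $\|M^{\ell_N}\|_\infty\to0$ forces $\bar X$ to be Lipschitz and the convergence $X^{\ell_N}\to\bar X$ to be uniform on $[0,1]$. I then work on a fixed such realisation.

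Next I would turn the drift into an integral: by Proposition~\ref{CV_of_DeltaN} we have $\Delta^{\ell_N}_t=\frac1{\ell_N}\sum_{n=1}^{\lfloor\ell_N t\rfloor}f\big(\tfrac{n-1}{\ell_N},\tfrac{A_{n-1}}{\ell_N},\tfrac{B_{n-1}}{\ell_N}\big)+O(\ell_N^{-1})$ uniformly in $t$, and since $(A^{\ell_N}_s,B^{\ell_N}_s)=(\tfrac{A_{n-1}}{\ell_N},\tfrac{B_{n-1}}{\ell_N})$ for $s\in[\tfrac{n-1}{\ell_N},\tfrac n{\ell_N})$ while $f$ is bounded and Lipschitz in its first variable, this equals $\int_0^t f(s,A^{\ell_N}_s,B^{\ell_N}_s)\,ds+O(\ell_N^{-1})$ uniformly in $t$. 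So everything reduces to proving
\[
\int_0^t f\big(s,A^{\ell_N}_s,B^{\ell_N}_s\big)\,ds\ \longrightarrow\ \int_0^t f(s,\bar a_s,\bar b_s)\,ds\qquad\text{for every }t\in[0,1].
\]
The component $f_2$ and the summand $c-\sum_{k=0}^{c-1}(c-k)p_k(\,\cdot\,)$ of $f_1$ are bounded and continuous, so by the uniform convergence $(A^{\ell_N},B^{\ell_N})\to(\bar a,\bar b)$ the corresponding integrals converge; the only delicate term is $\int_0^t\ind_{\{A^{\ell_N}_s>0\}}\,ds$, because $f_1$ is discontinuous across the absorbing line $\{a=0\}$.

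To handle it I would split at $t_0:=\inf\{t\in[0,1]:\bar a_t=0\}$. On $[0,t_0)$: for any $t_1<t_0$ one has $\min_{[0,t_1]}\bar a>0$, hence $A^{\ell_N}_s>0$ for all $s\le t_1$ once $N$ is large, so $\ind_{\{A^{\ell_N}_s>0\}}=1=\ind_{\{\bar a_s>0\}}$ there; passing to the limit gives $\bar X_t=x_0+\int_0^t f(s,\bar X_s)\,ds$ for all $t<t_0$, hence for $t=t_0$ by continuity, so $(\bar a,\bar b)$ solves \eqref{chap1:eqf1}--\eqref{chap1:eqf2} on $[0,t_0]$. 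On $[t_0,1]$ I would prove $\bar a\equiv0$. By Lemma~\ref{phi} the function $\phi(z)=c-\sum_{k=0}^{c-1}(c-k)p_k(z)$ is strictly decreasing, and (as in the proof of Proposition~\ref{chap1prop:t0}, using that $t\mapsto t+\bar a_t$ is increasing on $[0,t_0]$) the unique $z_c$ with $\phi(z_c)=1$ satisfies $z_c\le t_0$, so $\phi(s)<1$ for every $s>t_0$. If $\bar a_{t^*}>0$ for some $t^*\in(t_0,1]$, set $\alpha:=\sup\{t\in[t_0,t^*]:\bar a_t=0\}$; then $\alpha\ge t_0$, $\bar a_\alpha=0$, $\bar a>0$ on $(\alpha,t^*]$, and the argument on $[0,t_0)$ applies on compact subintervals of $(\alpha,t^*)$, giving, after letting the left endpoint decrease to $\alpha$, $\bar a_{t^*}=\int_\alpha^{t^*}\big(\phi(s+\bar a_s)-1\big)\,ds$; but $\phi(s+\bar a_s)\le\phi(s)<1$ on $(\alpha,t^*]$, so the right-hand side is strictly negative while $\bar a_{t^*}>0$, a contradiction. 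Hence $\bar a\equiv0$ on $[t_0,1]$, and since $f_2$ has no discontinuity, $\bar b_t=\bar b_0+\int_0^t f_2(s,\bar a_s,\bar b_s)\,ds$ for every $t\in[0,1]$. Thus $(\bar a,\bar b)$ is exactly the solution of \eqref{eqchap1:limit-ode} absorbed on the abscissa axis of Proposition~\ref{chap1prop:t0}, which is the statement of the corollary.

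I expect the discontinuity of $f$ on the absorbing set $\{a=0\}$ to be the real obstacle: the continuous mapping theorem cannot be invoked directly, and the Lebesgue measure of $\{s\le t:\bar a_s=0,\ A^{\ell_N}_s>0\}$ does \emph{not} vanish (near $t_0$ the discrete chain keeps being restarted with fresh seeds, so $A^{\ell_N}$ spends a positive fraction of the time at positive values there). The splitting at $t_0$ and the strict inequality $\phi(t_0)<1$ furnished by Lemma~\ref{phi} are precisely what close the argument.
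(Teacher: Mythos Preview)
Your overall strategy coincides with the paper's: pass to the limit in the Doob decomposition of Lemma~\ref{lem:doobsdecompositionofX}, use Proposition~\ref{CV_of_Martingale} to kill the martingale part, use Proposition~\ref{CV_of_DeltaN} to replace $\Delta^N$ by a Riemann sum of $f$, and then identify that sum with $\int_0^t f(s,\bar a_s,\bar b_s)\,ds$. The paper carries out this last step in a single stroke by invoking Lemma~\ref{CV_in_dist}, whose hypothesis is $f\in\mathcal C_b([0,1]^3,\R^2)$; it does not comment on the fact that $f_1$ contains the discontinuous factor $\ind_{a>0}$, and the absorption argument is deferred entirely to Proposition~\ref{chap1prop:t0} and the subsequent uniqueness proof.

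Your proof is therefore not a different route but a more scrupulous version of the same one: you isolate the indicator, treat the continuous remainder exactly as Lemma~\ref{CV_in_dist} would, and handle $\int_0^t\ind_{\{A^{\ell_N}_s>0\}}\,ds$ by the splitting at $t_0=\inf\{t:\bar a_t=0\}$. On $[0,t_0)$ your argument is straightforward; on $[t_0,1]$ you reproduce, for the limiting value $\bar a$, the monotonicity/absorption reasoning of Proposition~\ref{chap1prop:t0} (using that $\bar a$ already solves the ODE on $[0,t_0]$, hence $t\mapsto t+\bar a_t$ is increasing there and $z_c\le t_0$, so $\phi(s+\bar a_s)<1$ beyond $t_0$). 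This is correct; one small remark is that Lemma~\ref{phi} is stated for $c\ge2$, while for $c=1$ one has $\phi<1$ on all of $[0,1]$ and no $z_c$ exists---your conclusion $\phi(s)<1$ for $s>t_0$ then holds trivially, so the contradiction argument still goes through.

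In short: same skeleton as the paper, but you fill in the passage to the limit through the discontinuity that the paper's appeal to Lemma~\ref{CV_in_dist} leaves implicit, at the cost of importing part of the absorption argument of Proposition~\ref{chap1prop:t0} into the identification step.
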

	
	\begin{proof}Let us consider a limiting value $(\bar{a},\bar{b})\in \mathcal{D}([0,1],[0,1]^2)$ of $(X^N)_{N\geq 1}$. With an abuse of notation, we denote by $(X^N)_{N\geq 1}$ the subsequence converging to $(\bar{a},\bar{b})$. From \eqref{eq:Doob}, Propositions \ref{CV_of_Martingale} and \ref{CV_of_DeltaN}, we obtain that the process\\
		\[\Big( X_t-\frac{1}{N}\sum_{n=1}^{\lfloor Nt\rfloor} f\left(\frac{n-1}{N},A^N_{\frac{n-1}{N}},B^N_{\frac{n-1}{N}}\big),\ t\in [0,1]\right)\]
		converges uniformly to zero when $N\rightarrow +\infty$. Using Lemma \ref{CV_in_dist}, the process\\
		\[\Big(\frac{1}{N}\sum_{n=1}^{\lfloor Nt\rfloor} f\big(\frac{n-1}{N},A^N_{\frac{n-1}{N}},B^N_{\frac{n-1}{N}}\big),\ t\in [0,1]\Big)\]converges uniformly to the process\\
		\[\Big(\int_0^t f(s,\bar{a}_s,\bar{b}_s)ds,\ t\in [0,1]\Big).\]
		We deduce from this that the limiting value of $(X^N)_{N\geq 1}$ is necessarily solution of \eqref{chap1:eqf1}-\eqref{chap1:eqf2}.
	\end{proof}
	

	\subsection{Uniqueness of the ODE solutions}
	
	To prove Theorem \ref{chap1:mainthm}, it remains to prove the uniqueness of the limiting value, \ie that:
	\begin{proposition}The system of differential equations \eqref{chap1:eqf1}-\eqref{chap1:eqf2} admits a unique solution.
	\end{proposition}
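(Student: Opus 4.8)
The right-hand side $f$ of \eqref{eqchap1:limit-ode} is smooth except along the line $\{a=0\}$, where the indicator $\ind_{a_t>0}$ appearing in \eqref{chap1:eqf1} produces a jump; this is the only obstruction to a direct application of the Cauchy--Lipschitz theorem. The plan is therefore to cut $[0,1]$ at the absorption time $t_0=\inf\{t\in[0,1]:a_t=0\}$ introduced in Proposition \ref{chap1prop:t0} and to establish uniqueness separately on $[0,t_0]$ and on $[t_0,1]$, on each of which the relevant vector field is Lipschitz.

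On $[0,t_0]$ I would argue as follows. Any solution represents proportions, hence stays in the compact set where $t+a_t\in[0,1]$ and $t+a_t+b_t\in[0,1]$, and it suffices to work with $f$ restricted to that set. There, on the region $a>0$, the system becomes $\dot a_t=\phi(t+a_t)-1$ and $\dot b_t=\lambda(1-t-a_t-b_t)-\phi(t+a_t)$, where $\phi(z)=c-\sum_{k=0}^{c-1}(c-k)p_k(z)$ with $p_k$ as in \eqref{eq:pk}. Since each $p_k$ is $C^\infty$, the functions $\phi$ and $\phi'$ are bounded on $[0,1]$, so $(a,b)\mapsto\bigl(\phi(t+a)-1,\ \lambda(1-t-a-b)-\phi(t+a)\bigr)$ is globally Lipschitz in $(a,b)$, uniformly in $t$, with a constant $L=L(\lambda,c)$. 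If $x=(a,b)$ and $\tilde x=(\tilde a,\tilde b)$ are two solutions of \eqref{eqchap1:limit-ode} sharing the datum $x_0=(a_0,0)$ of Assumption \ref{assptn:initialER}, then Gr\"onwall's inequality for $t\mapsto\|x_t-\tilde x_t\|$ forces $x\equiv\tilde x$ on any subinterval on which both first components stay positive; a short continuity argument then shows that $a$ and $\tilde a$ reach $0$ at one and the same time $t_0$ (the time of Proposition \ref{chap1prop:t0}) and that $x_t=\tilde x_t$ for all $t\in[0,t_0]$.

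On $[t_0,1]$ I would invoke Proposition \ref{chap1prop:t0}, which gives $a_t=0$ for all $t\in[t_0,1]$; then the only remaining unknown is $b$, which solves the scalar equation $\dot b_t=\lambda(1-t-b_t)-\phi(t)$ with the value $b_{t_0}$ already pinned down. Its right-hand side is affine in $b_t$, hence Lipschitz, and the Cauchy--Lipschitz theorem gives a unique solution on $[t_0,1]$ (this step is vacuous if $t_0=1$). Concatenating the unique solution on $[0,t_0]$ with the unique solution on $[t_0,1]$ yields the unique solution of \eqref{eqchap1:limit-ode} on $[0,1]$; existence is already provided by the tightness and identification of the limiting values of $(X^N)_{N\ge1}$, or alternatively by glueing the two local solutions directly.

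The main obstacle is precisely the crossing of the abscissa axis $\{a=0\}$: because the vector field is discontinuous there, no single uniqueness theorem covers all of $[0,1]$, and one genuinely needs the qualitative analysis of Proposition \ref{chap1prop:t0} to guarantee that once $a$ reaches $0$ at time $t_0$ the solution is absorbed and never re-enters $\{a>0\}$. Granting that, the argument amounts to two routine applications of the Cauchy--Lipschitz theorem together with Gr\"onwall's lemma.
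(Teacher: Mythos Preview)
Your proposal is correct and follows essentially the same approach as the paper: both arguments split the analysis at $t_0$, use that on $\{a>0\}$ the vector field is Lipschitz so any two solutions coincide and therefore reach zero at a common time $t_0$, and then invoke Proposition~\ref{chap1prop:t0} to handle $[t_0,1]$, finishing with Gr\"onwall. The only cosmetic difference is that the paper makes the ``common hitting time'' step explicit by comparing every solution to the unique solution $\bar a$ of the auxiliary equation $\dot{\bar a}_t=\phi(t+\bar a_t)-1$, whereas you phrase it as a direct Gr\"onwall/continuity argument on the difference of two solutions.
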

	
	\begin{proof}
		Suppose that \eqref{chap1:eqf1}-\eqref{chap1:eqf2} have two solutions $(a^1,b^1)$ and $(a^2,b^2)$, then for all $t\in [0,1]$,\\
		\begin{equation}\label{compa}
		|a^1_t - a^2_t| \leq \displaystyle\int_{0}^{t} |g(s, a_s^1) - g(s, a^2_s)|ds + \displaystyle\int_{0}^{t} \bigg|\ind_{\{a^1_s>0\}}- \ind_{\{b^2_s>0\}}\bigg|ds,
		\end{equation} where
		\begin{align}
		g(t,a_t, b_t)&:= c- \sum_{k=0}^{c-1} (c-k) p_k(t+a_t).
		\end{align}
		In the first term of the right hand side of \eqref{compa}, we have\\
		\begin{equation}\label{1stterm}
		|g(s, a_s^1) - g(s, a^2_s)| \leq |\partial_a g(s,\xi_s)| |a_s^1-a_s^2|,
		\end{equation}
		for some real value $\xi_s$ between $a_s^1$ and $a_s^2$, \ie $\min\{a_s^1, a_s^2\} \leq \xi_s \leq \max\{a_s^1, a_s^2\}$.\\
		For the second term, we want to prove that for all $t\in [0,1]$,\\
		\begin{equation}\label{2ndterm}
		\int_{0}^{t} \left| \ind_{a_s^1>0} -\ind_{a_s^2>0} \right|ds =0.
		\end{equation}
		In order to do so, we first prove that all the solutions of \eqref{chap1:eqf1} touch zero at the same point and that after touching zero, they stay at zero. Consider the equation:\\
		\begin{equation*}\label{eq:maineq1'}
		\dfrac{d\bar{a}_t}{dt}= g(t,\bar{a}_t) - 1. \quad \quad \quad \quad \quad \quad \text{\eqref{chap1:eqf1}'}
		\end{equation*}
		Because the function $(t,a)\mapsto f_1(t,a)-1$ is continuous with respect to $t$ and Lipschitz with respect to $a$ on $[0,1]$, Equation \eqref{chap1:eqf1}' has unique solution $\bar{a}_t$ for $t$ in $[0,1]$. Let us define $$\bar{t}_0:=\inf\{t>0: \bar{a}_t=0\}$$ and
		$$t_0:=\inf\{t>0: a_t=0\}$$
		where $a_t$ is a solution of \eqref{chap1:eqf1}.
		Since the two equations \eqref{chap1:eqf1} and (\ref{eq:maineq1}') coincide on $[0,t_0\wedge \bar{t}_0]$, $a_t=\bar{a}_t$ for all $t\in [0,t_0\wedge \bar{t}_0]$. Thus, $\bar{t}_0=t_0$ and $a^1_t=a^2_t=a_t$ for all $t\leq t_0$ implying that $ \int_{0}^{t} \left| \ind_{a_s^1>0} -\ind_{a_s^2>0} \right|ds =0$, for all $t\leq t_0$. \\
		
		To conclude the proof of \eqref{2ndterm}, it remains to show that $a^1$ and $a^2$ stay at zero after time $t_0$. Indeed, this fact is claimed by the Proposition \ref{chap1prop:t0}.\\
		
		Consequently, from \eqref{1stterm} and \eqref{2ndterm}, we have\\
		\begin{equation}
		|a_t^1-a_t^2| \leq \int_{0}^{t} |\partial_a g(s,\xi_s)||a_s^1-a_s^2|ds.
		\end{equation}
		And because $f_2(.,.,b)$ is differentiable, we also have\\
		\begin{equation}
		|b_t^1-b_t^2| \leq \int_{0}^{t} \max_{a\in[0,1]}|\partial_bf_2(s,a,\zeta_s)||b_s^1-b_s^2|ds,
		\end{equation}where $\zeta_s$ is a value between $b_s^1$ and $b_s^2$, that is $\min(b_s^1, b_s^2) \leq \zeta_s \leq \max(b_s^1, b_s^2)$.
		Applying the Gronwall's inequality, we obtain\\
		\begin{align*}
		&|a_t^1-a_t^2| + |b_t^1-b_t^2| \\
		&\leq (|a_0^1-a_0^2| + |b_0^1-b_0^2| )\exp\left(\int_{0}^{t}\left[|\partial_a f_1(s,\xi_s)| + \max_{a\in [0,1]}|\partial_bf_2(s,a,\zeta_s)\right] ds\right)=0,
		\end{align*}
		for all $t$ in $[0,1]$.
		That means the equations \eqref{chap1:eqf1}-\eqref{chap1:eqf2} have at most one solution.
	\end{proof}

	The function $(a(t,x),b(t,x))$ is continuous, then by Lemma \ref{CV_in_dist}, Proposition \ref{CV_of_Martingale}, we conclude that every subsequence $(X^{\ell_N})_{N \geq 1} \subset (X_N)_{N \geq 1}$ converges in distribution to a solution of the differential equations \eqref{chap1:eqf1}-\eqref{chap1:eqf2}. And because of the uniqueness of the solution of \eqref{chap1:eqf1}-\eqref{chap1:eqf2}, which is proved above, we conclude that the sequence $(X^N)_{N \geq 1}=(A^N,B^N)_{N \geq 1}$ converges in distribution to that unique solution.

	\section{The central limit theorem}\label{sec:TCL}
	
	For every $N \in \N^*$, let us define:\\
	\begin{align}
	\tau^N_0:= \inf \{t > 0, A^N_t =0\}.
	\end{align}
	
	When the underlying networks are supercritical Erd\"os-R\'enyi graphs: $ER(N, \lambda/N)$, $\lambda>1$, the size of the largest and the second largest components (\cite{vanderhofstad}) is approximated as $|\mathcal{C}_{max}| = O(N)$ and $|\mathcal{C}_{(2)}| = O(\log(N))$ as $N$ tends to infinity.	The probability that one of the initial $A_0$ individuals belongs to the giant component converges to 1. Indeed, we can consider that our initial condition consists of the first nodes explored until $\lfloor \|x_0\| N \rfloor$ individuals are discovered. Each time there is no more coupon, a new seed is chosen uniformly in the population, of which the giant component represents a proportion $\zeta_{\lambda}$. Hence, the number of seeds $S$ until we first hit the giant component follows roughly a Geometric distribution with parameter $\zeta_\lambda$. Since for seeds outside the giant component, the associated exploration trees are of size at most $\log(N)$, the number of individuals discovered before finding the giant component is of order $\log(N) < \lfloor \|x_0\| N \rfloor$. Under the assumption \ref{assptn:initialER}, there is a positive fraction of seeds belonging to the giant component of $ER(N, \lambda/N)$ with a probability converging to 1.\\
	
	For the central limit theorem, we are interested in the limit of the RDS process in the giant component of $ER(N,\lambda/N), \lambda>1$. By the lemma \ref{chap1:lemmatau0}, we see that the Markov process $(A_t^N)_{N \geq 1}$ absorbs after the time $t_0$ with probability approximately $1$ as $N$ tends to infinity. Thus, in the sequels, we work conditionally on $\{\tau_0^N \geq t_0\}$ and all the processes are treated only in the interval $[0,t_0]$.\\

	We now consider the process\\ 
	\begin{equation}
	W_t^N:= \dfrac{X_{\lfloor Nt \rfloor} - N(a_t,b_t)}{\sqrt{N}} = \sqrt{N} (X^N_t - x_t), t \in [0,t_0], N \in \N^*.
	\end{equation}
	\begin{assumption}\label{chap1:initialCLT}
		Let $W_0 = (W^1_0, W^2_0)$ be a Gaussian vector: $W_0 \sim \mathcal{N}(0; \Sigma)$. Assume that $W^N_0 = \sqrt{N}(X^N_t -x_0)$ converges in distribution to $W_0$ as $N\rightarrow \infty$.
	\end{assumption}
	\begin{theorem}Under Assumption \ref{chap1:initialCLT}, the process $(W^N)_{N \geq 1}$ converges in distribution in $\mathcal{D}([0,t_0],\mathbb{R}^2)$ to $Y$, which satisfies\\
		\begin{equation}\label{chap1:eqCLT}
		W_t= W_0+ \int\limits_{0}^{t}G(s,a_s,b_s,W_s)ds + M(t,a_t,b_t)
		\end{equation}
		where\\
		\begin{equation}
		G(t,a,b,w):=\begin{pmatrix}
		\phi'(t+a)w^1\\
		-\lambda(w^1+w^2)-\phi'(t+a)w^1
		\end{pmatrix};
		\end{equation}
		\begin{equation}
		\phi(z):= c- \sum_{k=0}^{c-1} (c-k) \dfrac{\lambda^k(1-z)^k}{k!}e^{-\lambda(1-z)},
		\end{equation} and $\phi'(z)$ is the derivative with respect to $z$ of $\phi$; $M$ is a zero-mean martingale with the quadratic variation\\
		\begin{equation}
		\langle M(\cdot,a_\cdot,b_\cdot) \rangle _t :=  \left(\int\limits_{0}^{t}m_{ij}(s,a_s,b_s)ds\right)_{i,j \in \{1,2\}},
		\end{equation}
		in which\\
		\begin{align}
		m_{11}(t,a,b) &:= \sum_{k=0}^{c} (c-k)^2 p_k(t+a) -  \left(\sum_{k=0}^{c} (c-k) p_k(t+a)\right)^2;\\
		m_{22}(t,a,b) &: = \lambda(1-t-a-b)+ 2\lambda(1-t-a-b) \nonumber
		\\& \times \left(c(\lambda-1) + \displaystyle \sum_{k=0}^{c}p_k(t+a)\right)+ m_{11}(t,a,b);\\
		m_{12}(t,a,b) :&= \lambda(1-t-a-b) \left(c(\lambda-1) + \displaystyle \sum_{k=0}^{c}p_k(t+a)\right) - m_{11}(t,a,b).
		\end{align}
	\end{theorem}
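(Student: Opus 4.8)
As explained just before the statement, we argue conditionally on the event $\{\tau_0^N\ge t_0\}$ --- whose probability tends to $1$, since the RDS reaches the giant component and $(A^N_t)$ is then not absorbed before $t_0$ --- and on the interval $[0,t_0]$. On this event $A^N_s>0$ for every $s\in[0,t_0)$, so the indicator term of the drift of $X^N$ agrees with that of the limiting ODE \eqref{chap1:eqf1}-\eqref{chap1:eqf2} and only the $C^\infty$ function $\phi$ of the statement enters; in particular, for $a>0$ the Jacobian of $f=(f_1,f_2)$ of \eqref{def:F} in $(a,b)$ is
\[
Df(s,a,b)=\begin{pmatrix}\phi'(s+a)&0\\[2pt] -\lambda-\phi'(s+a)&-\lambda\end{pmatrix},
\]
which is bounded on $[0,1]^3$, and $G(s,a,b,w)=Df(s,a,b)\,w$. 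The plan is the standard four steps of a martingale functional CLT: a semimartingale decomposition of $W^N$, a uniform $L^2$ bound yielding tightness via Aldous--Rebolledo, identification of the limit through convergence of the predictable bracket and linearisation of the drift, and uniqueness of \eqref{chap1:eqCLT}.

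\noindent\textbf{Decomposition and tightness.} Subtracting $x_t=x_0+\int_0^t f(s,x_s)\,ds$ from the Doob decomposition of Lemma \ref{lem:doobsdecompositionofX} and rescaling gives $W^N_t=W^N_0+\sqrt N\bigl(\Delta^N_t-\int_0^t f(s,x_s)\,ds\bigr)+\widetilde M^N_t$ with $\widetilde M^N:=\sqrt N\,M^N$. Proposition \ref{CV_of_DeltaN} and a Riemann-sum estimate show that $\sqrt N\bigl(\Delta^N_t-\int_0^t f(s,X^N_s)\,ds\bigr)\to 0$ uniformly; on $\{\tau_0^N\ge t_0\}$ the two indicators cancel along the segment $[x_s,X^N_s]$ for a.e.\ $s$, so the fundamental theorem of calculus gives $f(s,X^N_s)-f(s,x_s)=\Psi^N_s(X^N_s-x_s)$ with $\Psi^N_s:=\int_0^1 Df\bigl(s,x_s+u(X^N_s-x_s)\bigr)\,du$, whence
\[
W^N_t=W^N_0+\int_0^t \Psi^N_s\,W^N_s\,ds+\widetilde M^N_t+\varepsilon^N_t,\qquad \sup_{t\le t_0}\|\varepsilon^N_t\|\longrightarrow 0\ \text{in probability}.
\]
By \eqref{bound_of_brackets}, $\|\langle\widetilde M^N\rangle_{t_0}\|=N\|\langle M^N\rangle_{t_0}\|\le 6c^2+4\lambda$, so Doob's inequality bounds $\E[\sup_{s\le t_0}\|\widetilde M^N_s\|^2]$ uniformly in $N$; since $\|\int_0^t\Psi^N_sW^N_s\,ds\|\le K\int_0^t\|W^N_s\|\,ds$ ($Df$ bounded) and $\sup_N\E\|W^N_0\|^2<\infty$ (Assumption \ref{chap1:initialCLT}), Gronwall's lemma gives $\sup_N\E[\sup_{s\le t_0}\|W^N_s\|^2]<\infty$. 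Together with the time-Lipschitz bounds $\|\int_s^t\Psi^N_uW^N_u\,du\|\le K(\sup_u\|W^N_u\|)|t-s|$ and $\|\langle\widetilde M^N\rangle_t-\langle\widetilde M^N\rangle_s\|\le C|t-s|$ (multiply \eqref{etape1} by $N$), the Aldous--Rebolledo argument of Lemma \ref{lem:tight} yields tightness of $(W^N)$, hence of $(W^N,\widetilde M^N,W^N_0)$; moreover the jumps of $\widetilde M^N$ are at most $N^{-1/2}\max_{n\le N}\|X_n-X_{n-1}\|$, which tends to $0$ in probability (the binomial increments $H_n$ being of order at most $\log N$), so any limit point of $\widetilde M^N$ is continuous.

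\noindent\textbf{Identification of the limit.} Using the explicit conditional moments \eqref{eq:E(Y_n-wedge-c)}-\eqref{eq:P(Y=k)}, \eqref{VarY_i} and \eqref{EH_iY_i}, the uniform convergence $\P(Y_n=k\mid\mathcal F_{n-1})\to p_k(s+a_s)$ (Proposition \ref{CV_of_DeltaN}), the law of large numbers $X^N\to x$ (Theorem \ref{chap1:mainthm}), and a Riemann-sum estimate, each entry of $\langle\widetilde M^N\rangle_t=N\langle M^N\rangle_t$ converges in probability, uniformly in $t\le t_0$, to $\int_0^t m_{ij}(s,a_s,b_s)\,ds$ with $m_{ij}$ as stated --- the $(1,1)$ entry being $\Var(Y\wedge c)$ for $Y\sim\mathrm{Poisson}(\lambda(1-s-a_s))$, which rearranges to $m_{11}$, and the others following from \eqref{VarY_i}, \eqref{EH_iY_i} and $\E[H_n\mid Y_n=k]\to k\,\tfrac{1-s-a_s-b_s}{1-s-a_s}$. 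By the functional CLT for martingales with asymptotically negligible jumps, $\widetilde M^N$ converges in law to a continuous centred Gaussian martingale $M$ with $\langle M\rangle_t=\bigl(\int_0^t m_{ij}(s,a_s,b_s)\,ds\bigr)_{i,j}$. Finally, since $\sup_{s\le t_0}\|W^N_s\|$ is bounded in probability and $Df$ is uniformly continuous on $[0,1]^3$, $\sup_{s\le t_0}\|\Psi^N_s-Df(s,a_s,b_s)\|\to 0$ in probability; thus $\int_0^\cdot\Psi^N_sW^N_s\,ds$ and $\int_0^\cdot Df(s,a_s,b_s)W^N_s\,ds$ have the same limit, and passing to the limit along a jointly convergent subsequence of $(W^N,\widetilde M^N,W^N_0)$, using continuity of the integral functional on $\mathcal D$, any limit point $W$ is continuous and satisfies \eqref{chap1:eqCLT} with $G=Df\cdot w$, which equals the stated $G$.

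\noindent\textbf{Uniqueness and the main difficulty.} Equation \eqref{chap1:eqCLT} is linear in $W$, with coefficients $\phi'(s+a_s)$ and $\lambda$ bounded on $[0,t_0]$ and with the pair $(W_0,M)$ fixed, so Gronwall's lemma gives pathwise uniqueness (equivalently, $W_t=R(t,0)W_0+\int_0^t R(t,s)\,dM_s$ where $R$ is the fundamental matrix of the linear system $\dot Z=Df(\cdot,a_\cdot,b_\cdot)Z$). Hence all subsequential limits of $(W^N)$ coincide, and $(W^N)$ converges in distribution in $\mathcal D([0,t_0],\R^2)$ to this unique $W$. The two points requiring the most care are (a) carrying the explicit second-moment computations \eqref{VarY_i}-\eqref{EH_iY_i} to their $N\to\infty$ limits to obtain the bracket $(m_{ij})$, and (b) handling the discontinuity of $f$ at $a=0$: the conditioning on $\{\tau_0^N\ge t_0\}$, together with Proposition \ref{chap1prop:t0} (which ensures $a_s>0$ on $(0,t_0)$), is exactly what lets the drift be linearised into $\int_0^\cdot Df(s,x_s)W^N_s\,ds$ without a boundary contribution.
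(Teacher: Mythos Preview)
Your proposal is correct and follows essentially the same architecture as the paper's proof: the same semimartingale decomposition $W^N=W^N_0+\widetilde\Delta^N+\widetilde M^N$, tightness via Aldous--Rebolledo combined with Gronwall moment bounds, convergence of $\langle\widetilde M^N\rangle$ to $\int_0^\cdot m_{ij}$ via the explicit conditional-moment formulas and Rebolledo's martingale CLT, linearisation of the drift, and pathwise uniqueness of the limiting linear SDE via a Lipschitz/Gronwall argument. The only stylistic difference is that you linearise the drift with the integral mean-value formula $f(s,X^N_s)-f(s,x_s)=\Psi^N_s\,W^N_s/\sqrt N$, whereas the paper writes an explicit second-order Taylor expansion of $\phi$ and controls the remainder termwise; the two are equivalent here since $Df$ is uniformly continuous on the relevant domain, and your formulation is slightly more compact.
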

The performance of fluctuation process $\sqrt{N}(A^N - a)$ is illustrated in Fig. \ref{fig:fluctuation}.

\begin{figure}[http!]\label{fig:fluctuation}
	\centering
	\includegraphics[height=5cm,width=7cm]{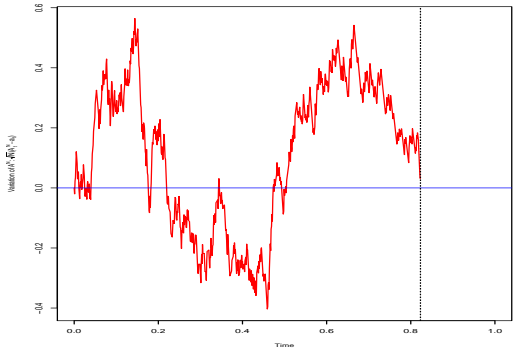}
	\caption{Fluctuation process $\sqrt{N}(A_t^N -a_t), t \in [0;t_0]$ for $N=1000$, $\lambda=2$ and $c=3$.}
\end{figure}

	The proof is divided into several steps: first, we write $W^N$ in the form of a Doob's composition; then we claim the tightness of the sequence $(W^N)_{N \geq 1}$ in $\mathcal{D}([0,t_0],\R^2)$ by proving the tightness of both terms: the finite variation part and the martingale; next, we identify the limiting values of the sequence $(W^N)_{N \geq 1}$; and finally we demonstrate that all the limiting values are the same.
	
	Recall from Lemma \ref{lem:doobsdecompositionofX} that:\\
	\begin{align*}
	\begin{pmatrix}
	X_t^{N,1}\\
	X_t^{N,2}
	\end{pmatrix} = \begin{pmatrix}
	A_0^N \\B_0^N
	\end{pmatrix}
	+ \begin{pmatrix}
	\Delta_t^{N,1}\\
	\Delta_t^{N,2}
	\end{pmatrix} +
	\begin{pmatrix}
	M_t^{N,1}\\
	M_t^{N,2}
	\end{pmatrix},
	\end{align*} where\\
	\begin{align*}
	\Delta^{N,1}_t &= \frac{1}{N} \sum_{i=1}^{\lfloor Nt \rfloor} \left\{ c- \sum_{k=0}^{c-1}(c-k) \P(Y_i=k| \mathcal{F}_{i-1}) - 1  \right\},\\
	\Delta^{N,2}_t &= \frac{1}{N} \sum_{i=1}^{\lfloor Nt \rfloor} \left\{ \lambda\left( 1- \frac{i}{N} - \frac{A_{i-1}}{N} - \frac{B_{i-1}}{N} \right) \right.
	\\ & \left. \hspace{3cm} - \left( c- \sum_{k=0}^{c-1} (c-k)\P(Y_i=k|\mathcal{F}_{i-1}) \right) \right\},
	\end{align*} and where
	\begin{align}
	\langle M^N \rangle_t =
	\begin{bmatrix}
	\langle M^{N,1}, M^{N,1} \rangle_t & \langle M^{N,1}, M^{N,2} \rangle_t\\
	\langle M^{N,2}, M^{N,1} \rangle_t & \langle M^{N,2}, M^{N,2} \rangle_t
	\end{bmatrix}.
	\end{align}
	From the proof of Lemma \ref{lem:doobsdecompositionofX}, we recall the equation \eqref{bound-of-Delta}:\\
	\begin{equation} \label{ineq:Delta}	
	\Big\| \Delta^N_t- \frac{1}{N}\sum_{n=1}^{\lfloor Nt \rfloor} f\big(\frac{n-1}{N},\frac{A_{n-1}}{N},\frac{B_{n-1}}{N}\big) \Big\| \leq \frac{C}{N},
	\end{equation} where $f$ is defined in \eqref{def:F}: $f(t,a, b)=(f_1(t,a,b),f_2(t,a,b))$,\\
	\begin{align*}
	f_1(t,a)&:= c-\sum_{k=0}^{c-1}(c-k)p_k(t+a) - 1\\
	f_2(t,a,b)&:= (1-t-a-b)\lambda +\sum_{k=0}^{c-1}(c-k)p_k(t+a)-c.
	\end{align*}
	and recall the components of the quadratic variation $\langle M^N \rangle_t$ given by \eqref{eq:bracketM}:\\
	\begin{align*}
	\langle M^{N,1} \rangle_t &=\dfrac{1}{N^2} \displaystyle\sum_{n=1}^{\lfloor Nt \rfloor} \text{Var}(Y_n \wedge c | \mathcal{F}_{n-1}),\\
	\langle M^{N,1},M^{N,2}\rangle_t  &= \frac{1}{N^2} \sum_{n=1}^{\lfloor Nt\rfloor} \Cov(Y_n\wedge c, H_n-Y_n\wedge c\ |\ \mathcal{F}_{n-1}),\\
	\langle M^{N,2} \rangle_t & = \dfrac{1}{N^2} \displaystyle\sum_{n=1}^{\lfloor Nt \rfloor} \text{Var}(H_n-Y_n \wedge c | \mathcal{F}_{n-1}).
	\end{align*}
	Notice that in this section, we work conditionally on $\{ \tau_N^0 \geq t_0\}$ and that all processes are defined in the time interval $[0,t_0]$, thus all the terms $\ind_{A_{i-1}\geq 1}, 1\leq i \leq \lfloor Nt_0\rfloor$, $\ind_{A^N_t>0}$, $\ind_{a_t>0}$ are replaced by 1.
	
	For all $N \in \N^*$ and for all $t \in [0,t_0]$, $W_t^N$ is written as: \\
	\begin{align*}
	W^N_t =& \sqrt{N}\begin{pmatrix}
	A_0^N -a_0\\ B_0^N - b_0
	\end{pmatrix} +
	\sqrt{N}\begin{pmatrix}
	\Delta_t^{N,1} - \int_0^t f_1(s,a_s,b_s)ds \\ \Delta_t^{N,2} - \int_0^t f_2(s,a_s,b_s)ds
	\end{pmatrix} +
	\sqrt{N}\begin{pmatrix}
	M_t^{N,1}\\M_t^{N,2}
	\end{pmatrix}
	\\=& W_0^N + \widetilde{\Delta}_t^N + \widetilde{M}_t^N.
	\end{align*}
	We prove tightness of the process in $\mathcal{D}([0,t_0],\R^2)$ and then identify the limiting values.
	
	\subsection{Tightness of the process $(W^N)_{N\geq 1}$}
	
	\begin{proposition}
		The sequence $(W^N)_{N\geq 1}$ is tight in $\mathcal{D}([0,t_0], \R^2)$.
	\end{proposition}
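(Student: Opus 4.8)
The strategy mirrors the proof of Lemma~\ref{lem:tight}: I would verify the Aldous--Rebolledo criterion (\cite[Theorem 2.3.2 and Corollary 2.3.3]{metivier}) for the decomposition $W^N=W_0^N+\widetilde\Delta^N+\widetilde M^N$ introduced above. Three things must be checked: (i) for each fixed $t\in[0,t_0]$ the family $(W^N_t)_{N\ge1}$ is tight in $\R^2$; (ii) the modulus of continuity of the finite-variation part $\widetilde\Delta^N$ is uniformly controllable; (iii) the modulus of continuity of the predictable bracket $\langle\widetilde M^N\rangle$ is uniformly controllable. The initial term is handled by Assumption~\ref{chap1:initialCLT}. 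The genuinely new difficulty compared with Section~\ref{sec:LLN} is the factor $\sqrt N$, which forces an \emph{a priori} second-moment estimate before anything else.

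\textbf{The main obstacle} is therefore to establish
\[
\sup_{N\ge1}\ \E\Big[\sup_{t\in[0,t_0]}\|W^N_t\|^2\Big]<+\infty ,
\]
noting first that for each fixed $N$ this quantity is finite since $\|W^N_t\|\le 2\sqrt N$, which is what legitimises the Gronwall argument below. Using \eqref{ineq:Delta}, I would write $\widetilde\Delta^N_t$ as the sum of $\sqrt N$ times the discrepancy controlled by Proposition~\ref{CV_of_DeltaN}, of size $O(1/N)$ uniformly in $t$, and of $\sqrt N\big(\frac1N\sum_{n\le\lfloor Nt\rfloor}f(\frac{n-1}N,X^N_{(n-1)/N})-\int_0^t f(s,x_s)\,ds\big)$. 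The latter splits into a Riemann-sum error of order $1/N$ — here one uses that on $[0,t_0]$ the indicator in $f_1$ is $1$, so that $f_1(t,a)=\phi(t+a)-1$ and $f_2(t,a,b)=\lambda(1-t-a-b)-\phi(t+a)$ are smooth and Lipschitz, $\phi$ being a finite linear combination of the $p_k$ — plus $\int_0^t\sqrt N\big(f(s,X^N_s)-f(s,x_s)\big)\,ds$, which is bounded by $L\int_0^t\|W^N_s\|\,ds$, $L=L(\lambda,c)$ being the Lipschitz constant of $f(s,\cdot)$ on $[0,1]^2$. This gives $\|W^N_t\|\le\|W_0^N\|+\frac{C}{\sqrt N}+L\int_0^t\|W^N_s\|\,ds+\|\widetilde M^N_t\|$. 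Taking $\sup_{t\le u}$, squaring, taking expectations, applying Cauchy--Schwarz to the time integral, Doob's $L^2$ inequality $\E[\sup_{t\le t_0}\|\widetilde M^N_t\|^2]\le c_0\,\E[\|\langle\widetilde M^N\rangle_{t_0}\|]$ together with $\langle\widetilde M^N\rangle_t=\frac1N\sum_{n\le\lfloor Nt\rfloor}(\cdots)$ having entries bounded by a constant (since $Y_n\wedge c\le c$ and $\Var(H_n\mid\mathcal F_{n-1})\le\lambda$, cf.\ \eqref{eq:bracketM}), and $\sup_N\E[\|W_0^N\|^2]<\infty$, one obtains $\psi_N(u):=\E[\sup_{t\le u}\|W^N_t\|^2]\le K+K\int_0^u\psi_N(s)\,ds$ with $K=K(\lambda,c,t_0)$ independent of $N$; Gronwall's lemma then yields $\psi_N(u)\le Ke^{Kt_0}$.

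Granting this bound, the three conditions follow routinely. Condition (i) is immediate from Markov's inequality and $\E[\|W^N_t\|^2]\le Ke^{Kt_0}$. For (iii), the bracket increments are bounded deterministically: $\|\langle\widetilde M^N\rangle_t-\langle\widetilde M^N\rangle_s\|\le\frac CN(\lfloor Nt\rfloor-\lfloor Ns\rfloor)\le C|t-s|+\frac CN$, hence at most $C\delta+\frac CN$ on $\{|t-s|\le\delta\}$, which is arbitrarily small once $\delta$ is small and $N$ is large. For (ii), restricting the splitting of $\widetilde\Delta^N$ above to a subinterval $[s,t]$ gives
\[
\|\widetilde\Delta^N_t-\widetilde\Delta^N_s\|\le\frac{C}{\sqrt N}+L\int_s^t\|W^N_r\|\,dr+\frac CN\sup_{r\le t_0}\|W^N_r\| ,
\]
so that $\E\big[\sup_{|t-s|\le\delta}\|\widetilde\Delta^N_t-\widetilde\Delta^N_s\|\big]\le\frac{C}{\sqrt N}+\big(L\delta+\frac CN\big)\sqrt{Ke^{Kt_0}}$, which can be made smaller than any prescribed bound by first letting $N\to\infty$ and then taking $\delta$ small (the sequence being tight as soon as some tail $(W^N)_{N\ge N_0}$ is, the finitely many earlier terms being each a c\`adl\`ag process, hence tight on their own). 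Combining (i)--(iii) with the Aldous--Rebolledo criterion establishes the tightness of $(W^N)_{N\ge1}$ in $\mathcal D([0,t_0],\R^2)$.
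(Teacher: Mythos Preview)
Your proposal is correct and follows essentially the same route as the paper: Aldous--Rebolledo criterion, splitting $\widetilde\Delta^N$ via Proposition~\ref{CV_of_DeltaN} and the Lipschitz/smoothness of $f$ (through the $p_k$'s) to isolate an $L\int_0^t\|W^N_s\|\,ds$ term, then a Gronwall argument for the moment bound, and finally modulus-of-continuity estimates on $\widetilde\Delta^N$ and $\langle\widetilde M^N\rangle$. The only notable variation is that you control $\E[\sup_{t\le t_0}\|W^N_t\|^2]$ via Doob's $L^2$ inequality and Gronwall, whereas the paper works with the pointwise first moment $\sup_{t}\E[\|W^N_t\|]$; your version is slightly more careful (you explicitly track the initial term $W_0^N$ and justify the martingale bound via its bracket), but the two arguments are interchangeable in this setting.
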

	
	\begin{proof}To prove that the distributions of the semi-martingales $(W^N)_{N\geq 1}$ form a tight family, we use the Aldous-Rebolledo criterion as in Lemma \ref{lem:tight}. To achieve this, we start with establishing some moment estimates that will be useful.\\
		
		\noindent \textbf{Step 1: moment estimates}\\
		
		From \eqref{bound_of_brackets}, we have
		\begin{align*}
		\mathbb{E}[\|\langle \widetilde{M}^N\rangle_t\|] \leq (6c^2+4\lambda)t.
		\end{align*}
		For the term $\tilde{\Delta}^{N}_t$:\\
		\begin{align}
		|\widetilde{\Delta}^{N,1}_t| &\leq  \sqrt{N}\left| \Delta^{N,1}_t - \dfrac{1}{N}\displaystyle\sum_{i=1}^{\lfloor Nt \rfloor} \bigg\{c-\sum_{k=0}^{c}(c-k) p_k\left(\dfrac{i-1}{N} + \frac{A_{i-1}}{N}\right) - 1\bigg\}\right| \nonumber\\
		& \quad + \sqrt{N}\Bigg|\dfrac{1}{N}\displaystyle\sum_{i=1}^{\lfloor Nt \rfloor} \bigg\{c-\sum_{k=0}^{c}(c-k) p_k\left(\dfrac{i-1}{N}+ \dfrac{A_{i-1}}{N}\right) - 1\bigg\}   \nonumber\\
		& \hspace{3cm}-\sum_{i=1}^{\lfloor Nt \rfloor} \int\limits_{(i-1)/N}^{i/N} \bigg(c-\sum_{k=0}^{c}(c-k) p_k\left(s+a_s\right) - 1\bigg)ds\Bigg| \nonumber\\
		& \quad +\sqrt{N} \left|\int\limits_{\lfloor Nt \rfloor/N}^{t} \bigg(c-\sum_{k=0}^{c}(c-k) p_k\left(s+a_s\right) - 1\bigg)ds\right|. \label{eq:delta1tilde}
		\end{align}
		Thanks to \eqref{ineq:Delta}, we have that\\
		\begin{align*}
		&\sqrt{N}\left| \Delta^{N,1}_t - \dfrac{1}{N}\displaystyle\sum_{i=1}^{\lfloor Nt \rfloor} \bigg\{c-\sum_{k=0}^{c}(c-k) p_k\left(\dfrac{i-1}{N} + \frac{A_{i-1}}{N}\right) - 1\bigg\}\right| \\
		\leq & \sqrt{N}\left\|\Delta^{N}_t - \frac{1}{N}\sum_{i=1}^{\lfloor Nt \rfloor} f\left(\frac{i-1}{N},\frac{A_{i-1}}{N},\frac{B_{i-1}}{N}\right) \right\| \leq \frac{C}{\sqrt{N}}.
		\end{align*}
		Because $f_1$ is continuous and is defined in a compact set $[0,1]^3$, then the third term in the r.h.s. of \eqref{eq:delta1tilde} is upper bounded by $\frac{\max_{(t,a,b)\in [0,1]^3}|f_1(t,a,b)|}{\sqrt{N}}$.
		
		For all $s \in \bigg[ \frac{i-1}{N}, \frac{i}{N} \bigg)$,\\
		\begin{align}
		\left| p_k(s+a_s) - p_k\left(\frac{i-1}{N} + \frac{A_{i-1}}{N}\right) \right| \leq \left( \bigg| s- \frac{i-1}{N} \bigg| + \bigg| a_s - A^N_{\frac{i-1}{N}} \bigg| \right)\sup_{z\in[0,1]} \bigg|\frac{dp_k}{dz}(z)\bigg| \\
		\leq \left(  \frac{1}{N} + \bigg|\frac{W^{N,1}_s}{\sqrt{N}} \bigg|\right)\sup_{z\in[0,1]} \bigg|\frac{dp_k}{dz}(z)\bigg|.
		\end{align}
		The second term in the r.h.s. of \eqref{eq:delta1tilde} is bounded by\\
		\begin{align*}
		\sqrt{N} \sum_{i=1}^{\lfloor Nt \rfloor} \sum_{k=0}^c(c-k) \int\limits_{(i-1)/N}^{i/N}\left| p_k(s+a_s) - p_k\left(\frac{i-1}{N} + \frac{A_{i-1}}{N}\right) \right| ds\\
		\leq \sup_{z\in[0,1]} \bigg|\frac{dp_k}{dz}(z)\bigg|\frac{c(c-1)}{2}\left( \frac{1}{\sqrt{N}} + \int_0^t |W^{N,1}_s|ds \right).
		\end{align*}
		Thus, \\
		\begin{align*}
		|\widetilde{\Delta}^{N,1}_t| \leq \frac{C+ \max_{(t,a,b)\in[0,1]^3}|f_1(t,a,b)| +\sup_{z\in[0,1]}\big|\frac{dp_k}{dz}(z) \big| \frac{c(c-1)}{2} }{\sqrt{N}} \\+  \sup_{z\in[0,1]} \bigg|\frac{dp_k}{dz}(z) \bigg| \frac{c(c-1)}{2} \int_0^t |W_s^{N,1}|ds.
		\end{align*}
		Using the similar argument, we have that\\
		\begin{align*}
		|\widetilde{\Delta}^{N,2}_t| \leq &\dfrac{C+\sup_{(t,a,b)\in[0,1]^3}|f_2(t,a,b)|+\sup_{z\in[0,1]}\big|\frac{dp_k}{dz}(z) \big| \frac{c(c-1)}{2}+ \lambda}{\sqrt{N}} \\&+  \left(\sup_{z\in[0,1]}\bigg|\frac{dp_k}{dz}(z) \bigg| \frac{c(c-1)}{2}+\lambda\right)\int\limits_{0}^{t} |W^{N,1}_s|ds + \lambda\int\limits_{0}^{t}|W^{N,2}_s|ds .
		\end{align*}
		Hence,
		\begin{equation}\label{delta_tilde}
		\|\widetilde{\Delta}^N_t \| \leq \dfrac{C'(\lambda,c)}{\sqrt{N}} + C''(\lambda,c) \int\limits_{0}^{t} \|W^N_s\|ds
		\end{equation}
		Then for every $t \in [0,t_0]$,\\
		\begin{align*}
		\mathbb{E}[\|W^N_t\|]& \leq \mathbb{E}[\|\widetilde{\Delta}^N_t\|] + \mathbb{E}[\|\widetilde{M}^N_t\|]\\
		& \leq (6c^2+4\lambda)t + \dfrac{C'(\lambda,c)}{\sqrt{N}} +  C''(\lambda,c) \int\limits_{0}^{t} \mathbb{E}[\|W^N_s\|]ds.
		\end{align*}
		And thus by the Gr\"onwall's inequality, we deduce that\\
		\begin{equation}\label{tightnessfinitedim}
		\sup_{t\in [0,t_0]}\mathbb{E}[\|W^N_t\|] \leq (6c^2+4\lambda+C'(\lambda,c))e^{C''(\lambda,c)}=C^{'''}, \quad \forall N\geq 1.
		\end{equation}
		Let $0\leq s<t\leq t_0$,\\
		\begin{align*}
		\mathbb{E}[\|W^N_t-W^N_s\|]&\leq \dfrac{C'(\lambda,c)(t-s)}{\sqrt{N}} + (6c^2+4\lambda)(t-s)+C''(\lambda,c) \int\limits_{s}^{t}\mathbb{E}[ \|W^N_u\|]du,\\
		& \leq (C'(\lambda,c)+6c^2+4 \lambda + C''(\lambda,c)C^{'''})(t-s)
		\end{align*}
		Then for given $\varepsilon>0, \eta>0$, choose $\delta$ such that $\delta  < \eta \varepsilon (C'(\lambda,c)+6c^2+4 \lambda + C''(\lambda,c)C^{'''})^{-1}$,\\
		\begin{equation}\label{tightness_of_Y}
		\P\left(\sup_{\substack{|t-s|<\delta\\  0\leq s<t \leq 1}} \|W^N_t-W^N_s\|>\eta\right) \leq \eta^{-1} \mathbb{E}\left[\sup_{\substack{|t-s|<\delta\\  0\leq s<t \leq 1}}\|W^N_t-W^N_s\|\right] < \varepsilon.
		\end{equation}
		By \eqref{tightnessfinitedim} and \eqref{tightness_of_Y}, we can conclude that $(W^N)_{N \geq 1}$ is tight in $\mathcal{D}([0,t_0],\mathbb{R}^2)$.
	\end{proof}
	
	\begin{proposition}
		The martingale $(\widetilde{M}^N)_{N \geq 1}$ converges in distribution to a Gaussian process $(M_t)_{0\leq t \leq t_0}$ on $[0,t_0]$.
	\end{proposition}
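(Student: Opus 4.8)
The plan is to apply a functional central limit theorem for locally square integrable martingales (Rebolledo's theorem, see e.g. \cite{metivier}), for which it suffices to check two things: that the predictable quadratic variation $\langle \widetilde M^N\rangle$ converges, uniformly on $[0,t_0]$ and in probability, to a \emph{deterministic} continuous limit, and that a conditional Lindeberg condition on the jumps holds. Rebolledo's theorem then simultaneously provides the tightness of $(\widetilde M^N)_{N\ge1}$ in $\mathcal D([0,t_0],\R^2)$ and identifies the limit as a continuous centered Gaussian martingale with the prescribed bracket. Recall from Lemma~\ref{lem:doobsdecompositionofX} that $\widetilde M^N=\sqrt N\,M^N$, so by \eqref{eq:bracketM},
\[
\langle\widetilde M^N\rangle_t = N\,\langle M^N\rangle_t = \frac1N\sum_{n=1}^{\lfloor Nt\rfloor}\Gamma_n,
\]
where $\Gamma_n$ is the $\mathcal F_{n-1}$-conditional covariance matrix of the increment $(A_n-A_{n-1},\,B_n-B_{n-1})$, whose entries are the quantities computed in \eqref{VarY_i}, \eqref{EY_i^2}, \eqref{E^2Y_i} and \eqref{EH_iY_i}.

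\textbf{Step 1: convergence of the bracket.} This is the core of the argument and the natural continuation of Proposition~\ref{CV_of_DeltaN}, pushed one moment further. Using the bound \eqref{proba} on $|\P(Y_n=k\,|\,\mathcal F_{n-1})-p_k(t_{n-1}+A^N_{t_{n-1}})|$ together with the boundedness of all the quantities entering \eqref{VarY_i}--\eqref{EH_iY_i}, one shows that there is a constant $C(\lambda,c)$ with
\[
\Big\|\,\Gamma_n - m\Big(\tfrac{n-1}{N},A^N_{t_{n-1}},B^N_{t_{n-1}}\Big)\Big\| \le \frac{C(\lambda,c)}{N},
\]
where $m=(m_{ij})_{i,j\in\{1,2\}}$ is the matrix of the statement. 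Summing over $n\le\lfloor Nt\rfloor$ gives $\langle\widetilde M^N\rangle_t=\frac1N\sum_{n=1}^{\lfloor Nt\rfloor} m\big(\frac{n-1}{N},A^N_{t_{n-1}},B^N_{t_{n-1}}\big)+O(1/N)$ uniformly in $t\in[0,t_0]$. Since $m$ is continuous on the compact $[0,1]^3$ and $X^N\to x$ uniformly in probability on $[0,t_0]$ by Theorem~\ref{chap1:mainthm}, a Riemann-sum argument (estimating the difference between the step sum and the integral via the modulus of continuity of $m$ and of $x$) yields
\[
\sup_{t\in[0,t_0]}\Big\|\,\langle\widetilde M^N\rangle_t - \int_0^t m(s,a_s,b_s)\,ds\,\Big\| \xrightarrow[N\to\infty]{\P} 0 .
\]

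\textbf{Step 2: Lindeberg condition, and conclusion.} The jumps of $\widetilde M^N$ occur at the times $t_n=n/N$ and equal $N^{-1/2}\big(X_n-X_{n-1}-\E[X_n-X_{n-1}\,|\,\mathcal F_{n-1}]\big)$. On $\{\tau_0^N\ge t_0\}$ one has $A_n-A_{n-1}=Y_n\wedge c-1\in[-1,c-1]$, while $|B_n-B_{n-1}|\le H_n$ with $H_n$ a binomial variable of mean at most $\lambda$; hence the increments have conditional fourth moments bounded by a constant uniformly in $n$ and $N$, so $\E\big[\|\widetilde M^N_{t_n}-\widetilde M^N_{t_{n-1}}\|^4\,\big|\,\mathcal F_{n-1}\big]\le C/N^2$. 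Consequently, for every $\varepsilon>0$,
\[
\sum_{n=1}^{\lfloor Nt_0\rfloor}\E\Big[\|\widetilde M^N_{t_n}-\widetilde M^N_{t_{n-1}}\|^2\,\ind_{\{\|\widetilde M^N_{t_n}-\widetilde M^N_{t_{n-1}}\|>\varepsilon\}}\,\Big|\,\mathcal F_{n-1}\Big]\le \frac{1}{\varepsilon^2}\sum_{n=1}^{\lfloor Nt_0\rfloor}\E\big[\|\widetilde M^N_{t_n}-\widetilde M^N_{t_{n-1}}\|^4\,\big|\,\mathcal F_{n-1}\big]\le \frac{C t_0}{\varepsilon^2 N}\to 0 .
\]
With the deterministic continuous limit of the brackets from Step~1 and this Lindeberg condition, Rebolledo's functional martingale CLT applies and shows that $(\widetilde M^N)_{N\ge1}$ converges in distribution in $\mathcal D([0,t_0],\R^2)$ to a continuous centered Gaussian process $M=(M_t)_{0\le t\le t_0}$ with independent increments and deterministic quadratic variation $\langle M\rangle_t=\big(\int_0^t m_{ij}(s,a_s,b_s)\,ds\big)_{i,j}$ — that is, the martingale term appearing in \eqref{chap1:eqCLT}.

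The main obstacle is Step~1: one must carry out the moment approximations of the binomial conditional laws at the level of second (co)moments rather than means as in Proposition~\ref{CV_of_DeltaN}, and then combine these deterministic estimates with the law of large numbers of Theorem~\ref{chap1:mainthm} to recognize the limiting covariance structure as precisely $m=(m_{ij})$; the Lindeberg verification and the invocation of Rebolledo's theorem are then routine.
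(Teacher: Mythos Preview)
Your proposal is correct and follows essentially the same route as the paper: both establish convergence of the brackets $\langle\widetilde M^N\rangle$ to $\int_0^t m(s,a_s,b_s)\,ds$ by combining the binomial approximation \eqref{proba} with the law of large numbers (the paper packages the Riemann-sum step as Lemma~\ref{CV_in_dist}), and then invoke Rebolledo's martingale CLT. You are in fact more careful than the paper, which cites ``Theorem~2 in \cite{rebolledo}'' without checking the Lindeberg-type condition on the jumps; your fourth-moment verification in Step~2 fills that gap cleanly.
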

	\begin{proof}
		Keeping in mind that $A_{n}-A_{n-1}=Y_n\wedge c - 1$ and $B_{n}-B_{n-1}= H_n - Y_n \wedge c$ and by \eqref{bracket_of_martigales}, we have\\
		\begin{align}
		\langle \widetilde{M}^{N,1} \rangle_t &= \displaystyle\dfrac{1}{N} \sum_{n=1}^{\lfloor Nt \rfloor} \left\{\mathbb{E}\left[\left(Y_n\wedge c\right)^2\big| \mathcal{F}_{n-1}\right] - \left(\mathbb{E}\left[Y_n\wedge c\big| \mathcal{F}_{n-1}\right]\right)^2\right\};\label{bracketM1}\\
		\langle \widetilde{M}^{N,2} \rangle_t
		&= \displaystyle\dfrac{1}{N} \sum_{n=1}^{\lfloor Nt \rfloor}\Big\{\text{Var}(H_n|\mathcal{F}_{n-1}) - 2 \Big( \mathbb{E}[H_n (Y_n \wedge c)|\mathcal{F}_{n-1}] \nonumber\\ &\hspace*{2cm} -\mathbb{E}[H_n|\mathcal{F}_{n-1}] \mathbb{E}[Y_n \wedge c|\mathcal{F}_{n-1}] \Big)\Big\} + \langle \widetilde{M}^{N,1} \rangle_t  ;\label{bracketM2}\\
		\langle \widetilde{M}^{N,1}, \widetilde{M}^{N,2} \rangle_t
		& = \displaystyle \dfrac{1}{N} \sum_{n=1}^{\lfloor Nt \rfloor} \Big\{\mathbb{E}\left[H_n(Y_n\wedge c)\big| \mathcal{F}_{n-1}\right] \nonumber\\ &\hspace*{2cm} - \mathbb{E}\left[H_n|\mathcal{F}_{n-1}\right]\mathbb{E}\left[Y_n \wedge c|\mathcal{F}_{n-1}\right] \Big\} - \langle \widetilde{M}^{N,1} \rangle_t \label{bracketM12}
		\end{align}

		From \eqref{bracketM1}, \eqref{EY_i^2}, \eqref{E^2Y_i} and \eqref{proba},\\
		\begin{align*}
		&\left|\langle \widetilde{M}^{N,1} \rangle_t - \dfrac{1}{N} \sum_{i=1}^{\lfloor Nt \rfloor} m_{11}\left(\frac{i-1}{N},\frac{A_{i-1}}{N},\frac{B_{i-1}}{N}\right)  \right| \\
		&\leq \sum_{k=0}^{c} (c-k)^2 \dfrac{C(\lambda,k)}{N} + \sum_{k,\ell=0}^{c} \left(\dfrac{(c-k)C(\lambda,k)}{N} + \dfrac{(c-\ell)C(\lambda,\ell)}{N}\right)
		\leq \dfrac{D_1(\lambda,c)}{N}.
		\end{align*}
		From \eqref{bracketM2}, \eqref{VarY_i}, \eqref{EH_iY_i} and \eqref{proba},\\
		\begin{align*}
		\left|\langle \widetilde{M}^{N,2} \rangle_t - \dfrac{1}{N} \sum_{i=1}^{\lfloor Nt \rfloor} m_{22}\left(\frac{i-1}{N},\frac{A_{i-1}}{N},\frac{B_{i-1}}{N}\right)  \right| & \leq \dfrac{D_2(\lambda,c) }{N} + \dfrac{D_{1}(\lambda,c)}{N},
		\end{align*}
		where
		$D_2(\lambda,c) = \lambda + 2\sum_{k=0}^{c}(k^2-ck)C(\lambda,k)+2c\lambda + 1 + \sum_{k=0}^{c}(c-k)C(\lambda,k)$ and
		from \eqref{bracketM12}, \eqref{EH_iY_i},\\
		\begin{align*}
		\left|\langle \widetilde{M}^{N,1} , \widetilde{M}^{N,2}\rangle_t - \dfrac{1}{N} \sum_{i=1}^{\lfloor Nt \rfloor} m_{12}\left(\frac{i-1}{N},\frac{A_{i-1}}{N},\frac{B_{i-1}}{N}\right)  \right| & \leq \dfrac{D_3(\lambda,c)}{N} + \dfrac{D_1(\lambda,c)}{N},
		\end{align*}
		where
		$D_3(\lambda,c) = \sum_{k=0}^{c}(k^2-ck)C(\lambda,k)+c\lambda$. And since the vectorial function $(m_{k \ell})_{1 \leq k, \ell \leq 2}$ are continuous, then by Lemma \ref{CV_in_dist}, we obtain that $\langle\widetilde{M}^N\rangle_t$ converges uniformly in distribution to $\int\limits_{0}^{t} (m_{k, \ell}(s,a_s,b_s))_{k, \ell\in \{1,2\}}ds$. By Theorem 2 in \cite{rebolledo}, we can conclude that $(M^N)_{N \geq 1}$ converges uniformly in distribution to the Gaussian process $(M_t)_{t\in [0,t_0]}$, which is identified by its quadratic variation $\langle M \rangle_t = \int\limits_{0}^{t} (m_{ij}(s,a_s,b_s))_{i,j\in \{1,2\}}ds$.
	\end{proof}
	
	\begin{proposition}
		The finite variation $\left(\widetilde{\Delta}_t^N, t\in [0,t_0]\right)_{N \geq 1} $ converges in distribution to the process $\left( \Delta_t, t \in [0,t_0] \right)$, which is the unique solution of the stochastic differential\\
		\begin{equation}
		\Delta_t = \int_{0}^{t}G (s,a_s,b_s,W_s)dt
		\end{equation}
	\end{proposition}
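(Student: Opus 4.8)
The plan is to linearise the finite-variation part around the deterministic curve $(a,b)$, recognise a Riemann sum converging to the announced integral, and then pin down the limit by a Gr\"onwall-type uniqueness argument. \textbf{Step 1 (reduction to a Riemann sum in the fluctuations).} Recall $\widetilde\Delta^N_t=\sqrt N\big(\Delta^N_t-\int_0^t f(s,a_s,b_s)\,ds\big)$ with $f$ as in \eqref{def:F}. By \eqref{ineq:Delta}, up to a term bounded by $C/\sqrt N$ uniformly in $t$, one may replace $\Delta^N_t$ by $\frac1N\sum_{n=1}^{\lfloor Nt\rfloor} f\big(\tfrac{n-1}N,\tfrac{A_{n-1}}N,\tfrac{B_{n-1}}N\big)$. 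Likewise, since $f$ is Lipschitz on $[0,1]^3$ and $s\mapsto(a_s,b_s)$ is Lipschitz on $[0,t_0]$ (it solves \eqref{eqchap1:limit-ode}, whose right-hand side is bounded), the difference $\int_0^t f(s,a_s,b_s)\,ds-\frac1N\sum_{n=1}^{\lfloor Nt\rfloor} f\big(\tfrac{n-1}N,a_{(n-1)/N},b_{(n-1)/N}\big)$, together with the boundary piece $\int_{\lfloor Nt\rfloor/N}^t f$, is $O(1/N)$ uniformly in $t$, hence negligible after multiplication by $\sqrt N$. Therefore
\[\widetilde\Delta^N_t=\frac1N\sum_{n=1}^{\lfloor Nt\rfloor}\sqrt N\Big(f\big(\tfrac{n-1}N,\tfrac{A_{n-1}}N,\tfrac{B_{n-1}}N\big)-f\big(\tfrac{n-1}N,a_{(n-1)/N},b_{(n-1)/N}\big)\Big)+R^N_t,\]
with $\sup_{t\le t_0}\|R^N_t\|\to0$ in probability.

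\textbf{Step 2 (Taylor expansion and identification of $G$).} Since $f_1$ does not depend on $b$ and $f_2$ is affine in $b$, a first-order Taylor expansion (mean value theorem in the $a$ variable only) gives
\[\sqrt N\Big(f\big(\tfrac{n-1}N,\tfrac{A_{n-1}}N,\tfrac{B_{n-1}}N\big)-f\big(\tfrac{n-1}N,a_{(n-1)/N},b_{(n-1)/N}\big)\Big)=J^N_n\,W^N_{(n-1)/N},\]
where $J^N_n$ is the Jacobian $\partial_{(a,b)}f\big(\tfrac{n-1}N,\xi^N_n,\cdot\big)$ at a point $\xi^N_n$ between $A_{n-1}/N$ and $a_{(n-1)/N}$, and we used $\sqrt N(X^N_{(n-1)/N}-x_{(n-1)/N})=W^N_{(n-1)/N}$. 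On $[0,t_0]$ one has $f_1(t,a)=\phi(t+a)-1$ and $f_2(t,a,b)=\lambda(1-t-a-b)-\phi(t+a)$ with $\phi(z)=c-\sum_{k=0}^{c-1}(c-k)p_k(z)$, whence $\partial_a f_1=\phi'$, $\partial_b f_1=0$, $\partial_a f_2=-\lambda-\phi'$, $\partial_b f_2=-\lambda$, so that $\partial_{(a,b)}f(s,a,b)\,w=G(s,a,b,w)$.

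\textbf{Step 3 (passage to the limit).} By tightness of $(W^N)_{N\ge1}$ and the already proved convergence of $(\widetilde M^N)$, the triple $(W^N,\widetilde\Delta^N,\widetilde M^N)$ is tight; along a convergent subsequence, by the Skorokhod representation theorem one may assume a.s.\ convergence in $\mathcal D([0,t_0],\R^2)^3$ to $(W,\Delta,M)$ with $W=W_0+\Delta+M$. Here $M$ is the continuous Gaussian process of the previous proposition and $\Delta$ is continuous, so $W$ is continuous and $W^N\to W$ uniformly. Using the tightness bound (so that $\sup_{s\le t_0}\|W^N_s\|/\sqrt N\to0$ in probability) and uniform continuity of $\partial_{(a,b)}f$ on $[0,1]^3$, one replaces $J^N_n$ by $\partial_{(a,b)}f\big(\tfrac{n-1}N,a_{(n-1)/N},b_{(n-1)/N}\big)$ up to an error that, integrated against the $O(1)$ process $W^N$, is negligible. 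By Lemma \ref{CV_in_dist} (continuity of the functional $w\mapsto\int_0^{\cdot} g(s)w_s\,ds$ at a continuous limit, $g$ continuous), the remaining Riemann sum converges to $\int_0^t\partial_{(a,b)}f(s,a_s,b_s)W_s\,ds=\int_0^t G(s,a_s,b_s,W_s)\,ds$, hence $\Delta_t=\int_0^t G(s,a_s,b_s,W_s)\,ds$.

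\textbf{Step 4 (uniqueness and conclusion).} Substituting $W_s=W_0+\Delta_s+M_s$ gives $W_t=W_0+M_t+\int_0^t G(s,a_s,b_s,W_s)\,ds$; since $w\mapsto G(s,a_s,b_s,w)$ is linear with coefficients $\phi'(s+a_s)$ and $\lambda$ bounded on $[0,t_0]$, Gr\"onwall's inequality yields a unique solution given $(W_0,M)$. Thus all subsequential limits of $(W^N,\widetilde\Delta^N,\widetilde M^N)$ have the same law, which identifies $\Delta$ as the unique solution of $\Delta_t=\int_0^t G(s,a_s,b_s,W_s)\,ds$ and upgrades subsequential convergence to convergence of the whole sequence. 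The main difficulty is Step 3: one must pass simultaneously to the limit in a Riemann sum whose integrand is the Jacobian of $f$ at \emph{random} intermediate points, integrated against the fluctuation process $W^N$, and ensure the limiting integral involves the \emph{same} $W$ as in $W=W_0+\Delta+M$; this relies on joint tightness, on continuity of the limiting $W$ (which makes the integral functional continuous on Skorokhod space), and on the uniform smallness of $\|W^N\|/\sqrt N$ to discard the gap between the intermediate points and the deterministic curve.
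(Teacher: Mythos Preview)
Your proof is correct and follows the same overall strategy as the paper: decompose $\widetilde\Delta^N$ using the estimate \eqref{ineq:Delta}, Taylor-expand $f$ (equivalently $\phi$ and $\psi$) to identify the Jacobian $G$, and pass to the limit along tight subsequences via Skorokhod's representation. There is one organisational difference worth noting. The paper keeps the integral $\int_0^t f(s,a_s,b_s)\,ds$ intact and expands $f\big(\tfrac{i-1}{N},A^N_{(i-1)/N},B^N_{(i-1)/N}\big)-f(s,a_s,b_s)$ inside each interval $[(i-1)/N,i/N]$; this produces, besides the $W^N$ term, two extra contributions (a drift $\sqrt N\,(\tfrac{i-1}{N}-s)\,\phi'$ and a second-order remainder involving $\phi''$) that must each be shown to vanish. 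You instead first replace $\int_0^t f(s,a_s,b_s)\,ds$ by its Riemann sum at the deterministic points $(\tfrac{n-1}{N},a_{(n-1)/N},b_{(n-1)/N})$, which costs only $O(1/N)$ because $s\mapsto f(s,a_s,b_s)$ is Lipschitz on $[0,t_0]$; after this, only the difference $f(\cdot,A_{n-1}/N,B_{n-1}/N)-f(\cdot,a_{(n-1)/N},b_{(n-1)/N})$ remains, and a first-order mean value expansion suffices. This is a genuine (if modest) simplification: no second-order Taylor term and no time-shift terms appear. One small caveat: your single intermediate point $\xi^N_n$ in the ``Jacobian $J^N_n$'' is a slight abuse, since the scalar mean value theorem is applied separately to $\phi$ in $f_1$ and $f_2$, while the $b$-derivative $\partial_b f_2=-\lambda$ is constant; this is harmless here but would be worth spelling out. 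Your Step~4 (Gr\"onwall uniqueness for the limiting linear equation) is what the paper carries out separately in its Section~4.2; folding it into this proof is fine.
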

	
	\begin{proof}
		\begin{align}\label{Delta_tilde}
		\widetilde{\Delta}^N_t &= \sqrt{N} \left(\Delta^N _t- \dfrac{1}{N}\sum_{i=1}^{\lfloor Nt \rfloor}f\left(\frac{i-1}{N},A^N_{\frac{i-1}{N}},B^N_{\frac{i-1}{N}}\right)\right)\nonumber\\
		&\hspace*{1cm}+ \left(\dfrac{1}{N}\sum_{i=1}^{\lfloor Nt \rfloor}\sqrt{N}f\left(\frac{i-1}{N},A^N_{\frac{i-1}{N}},B^N_{\frac{i-1}{N}}\right)- \int\limits_{0}^{t} \sqrt{N}f(s,a_s,b_s)ds\right)\nonumber\\
		& = D^{N}_t + E^N_t,
		\end{align}
		where
		\begin{align}
		f(t,a,b)&:= \begin{pmatrix}
		c-\displaystyle \sum_{k=0}^{c-1}(c-k) \dfrac{\lambda^k}{k!}  (1-t-a)^k e^{-\lambda(1-t-a)} - 1\\
		(1-t-a-b) \lambda- c+  \displaystyle \sum_{k=0}^{c} (c-k) \dfrac{\lambda^k}{k!}(1-t-a)^k e^{ -\lambda(1-t-a)}
		\end{pmatrix} \nonumber\\
		&=\begin{pmatrix}
		f_1(t,a,b)\\f_2(t,a,b)
		\end{pmatrix}
		\end{align}
		From \eqref{ineq:Delta}, we have\\
		\begin{equation*}
		\|D_t^N\| = \left\|\sqrt{N} \left(\Delta^N _t- \dfrac{1}{N}\sum_{i=1}^{\lfloor Nt \rfloor}f\left(\frac{i-1}{N},A^N_{\frac{i-1}{N}},B^N_{\frac{i-1}{N}}\right)\right)\right\| \leq \frac{C(\lambda,c)}{\sqrt{N}}.
		\end{equation*}
		We need to find the limit of $E^N_t$.\\
		\begin{align}
		E_t^N& =
		\sum_{i=1}^{\lfloor Nt \rfloor} \sqrt{N}\int\limits_{\frac{i-1}{N}}^{\frac{i}{N}}\left(f(\frac{i-1}{N},A^N_{\frac{i-1}{N}},B^N_{\frac{i-1}{N}}) - f(s,a_s,b_s ) \right)ds - \sqrt{N} \int\limits_{\frac{\lfloor Nt \rfloor}{N}}^{t} f(s,a_s,b_s)ds \label{E^N}
		\end{align}
		Because $f$ is continuous function, defined in the compact set $[0,1]^3$, the second term in the r.h.s. of \eqref{E^N} is bounded by $\frac{\max_{(t,a,b)\in[0,1]^3}\|f(t,a,b)\|}{\sqrt{N}}$ and thus converges to $0$ as $N \rightarrow \infty$.
		
		We write $f$ as
		\begin{align*}
		f(t,a,b)= \begin{pmatrix}
		\phi(t+a)\\
		\psi(t+a+b)-\phi(t+a)
		\end{pmatrix}
		\end{align*}
		where $\phi(z)= c- \displaystyle\sum_{k=0}^{c-1} (c-k) \dfrac{\left[\lambda(1-z) \right]^k}{k!} e^{-\lambda(1-z)}$ and $\psi(z)= \lambda (1-z)$. Then\\
		\begin{align*}
		&\phi\left(\frac{i-1}{N}+A^N_{\frac{i-1}{N}}\right) - \phi(s+a_s) \\
		=&\phi'\left(\frac{i-1}{N}+A^N_{\frac{i-1}{N}}\right)\left((\frac{i-1}{N}-s)+(A^N_{\frac{i-1}{N}}-a_s)\right) \\
		 &\hspace{3cm}- \phi''(\xi_{i,s})\left((\frac{i-1}{N}-s)+(A^N_{\frac{i-1}{N}} - a_s)\right)^2\\
		=& \big(\frac{i-1}{N}-s\big)\phi'\left(\frac{i-1}{N}+A^N_{\frac{i-1}{N}}\right) + \frac{W^{N,1}_s}{\sqrt{N}} \phi'\left(\frac{i-1}{N}+A^N_{\frac{i-1}{N}}\right) \\
		& \hspace{2cm} - \left( \big(\frac{i-1}{N}-s\big) + \frac{W^{N,1}_s}{\sqrt{N}} \right)^2 \phi''(\xi_{i,s}),
		\end{align*}
		where $\xi_{i,s}$ takes the value between $\frac{i-1}{N}+A_{\frac{i-1}{N}}^N$ and $s+a_s$; $\phi'(\xi_{i,s})$ (\textit{resp.} $\phi''(\xi_{i,s}) )$) is first derivative (\textit{resp.} the second derivative) of $\phi$ at $\xi_{i,s}$. And\\
		\begin{align*}
		&\psi\left(\frac{i-1}{N}+A^N_{\frac{i-1}{N}}+B^N_{\frac{i-1}{N}}\right) -\psi(s+a_s+b_s)\\
		&= -\lambda\left( (\frac{i-1}{N}-s) + (A^N_{\frac{i-1}{N}} - a_s)+ (B^N_{\frac{i-1}{N}} - b_s)\right)\\
		& = -\lambda \left( \big( \frac{i-1}{N} - s \big) + \frac{W^{N,1}_s}{\sqrt{N}} + \frac{W^{N,2}_s}{\sqrt{N}}\right).
		\end{align*}
		So the first term in the right hand side of \eqref{E^N} can be written as\\
		\begin{align}
		&\dfrac{1}{N}\sum_{i=1}^{\lfloor Nt \rfloor}\begin{pmatrix}
		W^{N,1}_{\frac{i-1}{N}} \phi'\left(\frac{i-1}{N}+A^N_{\frac{i-1}{N}}\right)\nonumber\\
		-\lambda\left(W^{N,1}_{\frac{i-1}{N}}+ W^{N,2}_{\frac{i-1}{N}}\right)- \phi'\left(\frac{i-1}{N}+A^N_{\frac{i-1}{N}}\right)W^{N,1}_{\frac{i-1}{N}}
		\end{pmatrix}\\
		&\hspace{1.5cm}+ \sum_{i=1}^{\lfloor Nt \rfloor}
		\begin{pmatrix}
		\int\limits_{\frac{i-1}{N}}^{\frac{i}{N}} \left\{\sqrt{N}\big(\frac{i-1}{N}-s\big)\phi'\left(\frac{i-1}{N}+A^N_{\frac{i-1}{N}}\right)\right\}ds\\
		-\int\limits_{\frac{i-1}{N}}^{\frac{i}{N}} \left\{\sqrt{N}\big(\frac{i-1}{N}-s\big) \bigg(1+ \phi'\left(\frac{i-1}{N}+A^N_{\frac{i-1}{N}}\right)\bigg) \right\}ds
		\end{pmatrix}  \nonumber\\
		& \hspace{3cm}+\sum_{i=1}^{\lfloor Nt \rfloor}
		\begin{pmatrix}		
		-\int\limits_{\frac{i-1}{N}}^{\frac{i}{N}}\sqrt{N} \left\{\left(\big(\frac{i-1}{N}-s\big)+\frac{W^{N,1}_s}{\sqrt{N}}\right)^2 \phi''(\xi_{i,s})\right\}ds\\
		\int\limits_{\frac{i-1}{N}}^{\frac{i}{N}}\sqrt{N} \left\{\left(\big(\frac{i-1}{N}-s\big)+\frac{W^{N,1}_s}{\sqrt{N}}\right)^2 \phi''(\xi_{i,s})\right\}ds
		\end{pmatrix}\label{2ndterm_of_E^N}
		\end{align}
		Because $(W^N)_{N \geq 1}$ is tight, there exists a subsequence of $(W^N)_{N \geq 1}$, denoted again $(W^N)_{N \geq 1}$, which converges in distribution to $W=(W^1,W^2)\in \mathcal{D}([0,t_0],\R^2)$. The second and the third term of \eqref{2ndterm_of_E^N} converge in distribution to $0$ since\\
		\begin{equation*}
		\sum_{i = 1}^{\lfloor Nt \rfloor} \int\limits_{\frac{i-1}{N}}^{\frac{i}{N}} \sqrt{N}\left|\big(\frac{i-1}{N}-s\big) \phi'\left(\frac{i-1}{N}+A^N_{\frac{i-1}{N}}\right)\right|ds \leq \sup_{z \in [0,1]}|\phi'(z)|N^{-1/2},
		\end{equation*}
		and with $\widetilde{W}^N \stackrel{(d)}{=} W^N$ defined as in the Skorokhod's representation Theorem, $\widetilde{W}^N$ converges uniformly almost surely to $\widetilde{W} \stackrel{(d)}{=} W$, we have $(\widetilde{W}^N)_{N \geq 1}$ is bounded and that\\
		\begin{align*}
		&\sum_{i = 1}^{\lfloor Nt \rfloor} \int\limits_{\frac{i-1}{N}}^{\frac{i}{N}} \sqrt{N} \left| \left(\big(\frac{i-1}{N}-s\big)+\frac{\widetilde{W}^{N,1}_s}{\sqrt{N}}\right)^2 \phi''(\xi_{i,s})\right|ds \\
		\leq& \bigg(\sup_{z\in [0,1]} |\phi''(z)|+\sup_{N \geq 1}\|\widetilde{W}^{N,1}\|\bigg)N^{-1/2}.
		\end{align*}
		Then by Lemma \ref{CV_in_dist}, $(\widetilde{\Delta}^N)_{N \geq 1}$ converges in distribution to a process, which satisfies equation\\
		\begin{equation}
		\widetilde{\Delta}_t = \int\limits_{0}^{t}\begin{pmatrix}
		\phi'(s+a_s)W^1_s\\
		-\lambda(W^1_s+W^2_s)- \phi'(s+a_s)W^1_s
		\end{pmatrix}ds
		\end{equation}
	\end{proof}
	
	\subsection{The uniqueness of the SDEs}
	
	Since the process $(W^N)_{N\geq 1}$ defined in a closed interval: $[0,t_0]$ and tight in $\mathcal{D}([0,1];\R^2)$, so uniqueness of the solution of the SDE \eqref{chap1:eqCLT} is proved if the criteria in Theorem 3.1 of \cite[page 178]{ikedawatanabe} is verified. We need to justify that the functions $G(t, w_t)$ and $\sigma(t,w_t)=\langle M(\cdot,w_\cdot)\rangle_t$ are Lipschitz continuous, \ie for every $N\geq 1$, there exists $K_N>0$ such that:\\
	\begin{align*}
	\|G(t,u) - G(t,w)\| + \|\sigma(t,u)- \sigma(t,w)\| \leq K_N\|u -w\|, \quad \forall u, w \in \mathcal{B}_N,
	\end{align*} where $\mathcal{B}_N = \{x: \|x\| \leq N\}.$
	
	Indeed, this condition holds because\\
	\[
	\|G(t,u)-G(t,w)\| \leq \big(2\max_{z\in [0,1]}|\phi'(z)|+\lambda\big)\|u-w\|,
	\]
	and $\sigma(t,w)$ does not depend on $w$. Hence, the pathwise uniqueness of solutions holds for the equation\eqref{chap1:eqCLT}.



	\section{Some lemmas used in the proof}
	\begin{lemma}\label{CV_in_dist}
		Let $f$ be a function in $\mathcal{C}_b([0,1]^3,\R^2)$, and let $(X^N)_{N\geq 1}$ be a sequence of stochastic processes in $\mathcal{D}([0,1],[0,1]^2)$. If $X^N \xrightarrow{(d)} X \in \mathcal{C}([0,1], [0,1]^2)$ for the Skorokhod topology on $\mathcal{D}([0,1],[0,1]^2)$, then\\ \[\displaystyle\dfrac{1}{N} \sum_{n=1}^{\lfloor Nt \rfloor} f\left(\frac{n-1}{N},X^N_{\frac{n-1}{N}}\right) \xrightarrow{(d)} \int\limits_{0}^{t} f(s,X_s)ds.\]
	\end{lemma}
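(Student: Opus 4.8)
The plan is to deduce the statement from an almost-sure uniform convergence of left-point Riemann sums obtained after passing to a convenient coupling. The point is that the functional $\zeta\mapsto\bigl(t\mapsto\int_0^t f(s,\zeta_s)\,ds\bigr)$ is \emph{not} continuous on all of $\mathcal{D}([0,1],[0,1]^2)$ for the Skorokhod topology, but it is well behaved at \emph{continuous} paths, and the limit $X$ is continuous by assumption. First I would invoke the Skorokhod representation theorem \cite{billingsley} to build, on a common probability space, processes $\widetilde X^N\stackrel{(d)}{=}X^N$ and $\widetilde X\stackrel{(d)}{=}X$ with $\widetilde X^N\to\widetilde X$ almost surely in $\mathcal{D}([0,1],[0,1]^2)$. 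Since $\widetilde X$ is continuous, Skorokhod convergence to $\widetilde X$ upgrades to uniform convergence, that is, $\sup_{s\in[0,1]}\|\widetilde X^N_s-\widetilde X_s\|\to 0$ almost surely.

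Next I would show that, on this space,
\begin{equation*}
\sup_{t\in[0,1]}\Bigl\|\frac1N\sum_{n=1}^{\lfloor Nt\rfloor} f\Bigl(\tfrac{n-1}{N},\widetilde X^N_{(n-1)/N}\Bigr)-\int_0^t f(s,\widetilde X_s)\,ds\Bigr\|\xrightarrow[N\to\infty]{}0\quad\text{a.s.}
\end{equation*}
splitting the quantity inside the supremum into three pieces. (i) Replacing $\widetilde X^N_{(n-1)/N}$ by $\widetilde X_{(n-1)/N}$ costs at most $\omega_f\bigl(\sup_{s}\|\widetilde X^N_s-\widetilde X_s\|\bigr)$, where $\omega_f$ is the uniform modulus of continuity of $f$ on the compact $[0,1]^3$; this vanishes by the uniform convergence just established. (ii) The difference between the left-point Riemann sum $\frac1N\sum_{n=1}^{\lfloor Nt\rfloor} f(\tfrac{n-1}{N},\widetilde X_{(n-1)/N})$ and $\int_0^{\lfloor Nt\rfloor/N} f(s,\widetilde X_s)\,ds$ is at most $\omega_g(1/N)$, where $\omega_g$ is the modulus of continuity on $[0,1]$ of the continuous function $s\mapsto f(s,\widetilde X_s)$ (continuous because $f$ and $\widetilde X$ are). (iii) The leftover integral $\int_{\lfloor Nt\rfloor/N}^{t} f(s,\widetilde X_s)\,ds$ is bounded in norm by $\|f\|_\infty/N$. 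All three bounds are uniform in $t$ and tend to $0$ almost surely, which gives the displayed convergence.

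Finally, almost-sure convergence entails convergence in distribution in $\mathcal{D}([0,1],\R^2)$, and since $\frac1N\sum_{n=1}^{\lfloor N\cdot\rfloor} f(\tfrac{n-1}{N},\widetilde X^N_{(n-1)/N})$ has the same law as $\frac1N\sum_{n=1}^{\lfloor N\cdot\rfloor} f(\tfrac{n-1}{N}, X^N_{(n-1)/N})$ and $\int_0^\cdot f(s,\widetilde X_s)\,ds$ has the same law as $\int_0^\cdot f(s,X_s)\,ds$, the desired convergence in distribution follows. The only genuinely delicate step is the passage from Skorokhod convergence to uniform convergence of $(\widetilde X^N)_{N\geq 1}$, which is legitimate precisely because the limit is continuous; the rest is routine Riemann-sum bookkeeping relying on the uniform continuity of $f$ and the compactness of $[0,1]^3$.
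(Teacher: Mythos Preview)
Your proof is correct and follows essentially the same route as the paper: Skorokhod representation, the upgrade from Skorokhod to uniform convergence because the limit $\widetilde X$ is continuous, and then uniform continuity of $f$ on the compact $[0,1]^3$ to control the Riemann-sum discrepancy (with the leftover $\int_{\lfloor Nt\rfloor/N}^{t}$ bounded by $\|f\|_\infty/N$). Your three-piece decomposition---first swap $\widetilde X^N$ for $\widetilde X$, then pass from the Riemann sum for $s\mapsto f(s,\widetilde X_s)$ to the integral, then handle the tail---is slightly more explicit than the paper's version, which compares $f\bigl(\tfrac{n-1}{N},\widetilde X^N_{(n-1)/N}\bigr)$ directly to $f(s,\widetilde X_s)$ inside each subinterval and splits according to the indicator $\ind_{\{1/N+\|\widetilde X^N_s-\widetilde X_s\|<\delta\}}$; but the substance is the same.
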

	
	\begin{proof}
		Since $X^N \xrightarrow{(d)} X$, by Skorokhod's representation theorem \cite[Th.25.6, p.287]{billingsley_probability_and_measure}, there exist $\widetilde{X}^N\in \mathcal{D}([0,1], [0,1]^2)$  and $\widetilde{X} \in \mathcal{C}([0,1], [0,1]^2)$ defined on a common probability space $(\widetilde{\Omega}, \widetilde{\mathcal{F}}, \widetilde{\P})$ such that $\widetilde{X}^N \stackrel{(d)}{=} X^N$, $\widetilde{X} \stackrel{(d)}{=} X$ and $ \widetilde{X}^N \longrightarrow \widetilde{X} \text{ a.s.}$
		For any $t \in [0,1]$ and for any $N \in \N^*$,\\
		\begin{align*}
		&\left| \displaystyle\dfrac{1}{N} \sum_{n=1}^{\lfloor Nt \rfloor} f\big(\frac{n-1}{N},\widetilde{X}^N_{\frac{n-1}{N}}\big) - \int\limits_{0}^{t} f(s,\widetilde{X}_s)ds \right|\\
		& \leq \left| \displaystyle\dfrac{1}{N} \sum_{n=1}^{\lfloor Nt \rfloor} f\big(\frac{n-1}{N},\widetilde{X}^N_{\frac{n-1}{N}}\big) -\sum_{n=1}^{\lfloor Nt \rfloor} \int\limits_{\frac{n-1}{N}}^{\frac{n}{N}} f(s,\widetilde{X}_s)ds  \right| + \left|\int\limits_{\frac{\lfloor Nt \rfloor}{N}}^{t} f(s,\widetilde{X}_s)ds  \right|\\
		& \leq \displaystyle \sum_{n=1}^{\lfloor Nt \rfloor} \int\limits_{\frac{n-1}{N}}^{\frac{n}{N}}\left| f\big(\frac{n-1}{N},\widetilde{X}^N_{\frac{n-1}{N}}\big) - f(s,\widetilde{X}_s)\right|ds + \dfrac{\| f \|_\infty}{N}.
		\end{align*}
		Let $\varepsilon>0$. From the uniform continuity of $f$, there exists a positive constant $\delta=\delta(x)>0$ such that for all $(t,x),(t',x')\in [0,1]\times [0,1]^2$ satisfying $|t-t'|+\|x-x'\|_\infty<\delta$, $|f(t,x)-f(t',x')|<\varepsilon/2$. Now,\\
		\begin{align*}
		&\displaystyle \sum_{n=1}^{\lfloor Nt \rfloor} \int\limits_{\frac{n-1}{N}}^{\frac{n}{N}}\left| f\big(\frac{n-1}{N},\widetilde{X}^N_{\frac{n-1}{N}}\big) - f(s,\widetilde{X}_s)\right|ds\\
		&  = \displaystyle \sum_{n=1}^{\lfloor Nt \rfloor} \int\limits_{\frac{n-1}{N}}^{\frac{n}{N}} \left| f\big(\frac{n-1}{N},\widetilde{X}^N_{\frac{n-1}{N}}\big) - f(s,\widetilde{X}_s)\right|\ind_{\frac{1}{N}+\| \widetilde{X}^N_s - \widetilde{X}_s \|_1 \geq \delta }ds \\
		&+ \displaystyle \sum_{n=1}^{\lfloor Nt \rfloor}\int\limits_{\frac{n-1}{N}}^{\frac{n}{N}} \left| f\big(\frac{n-1}{N},\widetilde{X}^N_{\frac{n-1}{N}}\big) - f(s,\widetilde{X}_s)\right|\ind_{\frac{1}{N}+\| \widetilde{X}^N_s - \widetilde{X}_s \|_1 < \delta }ds.
		\end{align*}
		Because $\widetilde{X}^N$ converges uniformly to $\widetilde{X}$ a.s., there exists $N_0(\omega)$ such that
		$ \sup_{s\in [0,1]}(1/N+\| \widetilde{X}^N_s - \widetilde{X}_s\| )< \delta, \quad \forall N \geq N_0 $ a.s.
		For $\mathbb{P}$-almost all $\omega\in \Omega$, when $N\geq \max(N_0(\omega), 2\|f\|_\infty/\varepsilon )$,\\
		\begin{align*}
		&\left| \displaystyle\dfrac{1}{N} \sum_{n=1}^{\lfloor Nt \rfloor} f\big(\frac{n-1}{N},\widetilde{X}^N_{\frac{n-1}{N}}\big) - \int\limits_{0}^{t} f(s,\widetilde{X}_s)ds \right| \\
		& \leq   \dfrac{\sup\|f\|}{N} + \displaystyle \sum_{n=1}^{\lfloor Nt \rfloor} \int\limits_{\frac{n-1}{N}}^{\frac{n}{N}} \left| f\big(\frac{n-1}{N},\widetilde{X}^N_{\frac{n-1}{N}}\big) - f(s,\widetilde{X}_s)\right|\ind_{\frac{1}{N}+\|\widetilde{X}^N_s -\widetilde{X}_s \|_1 < \delta}ds \\
		& \leq \frac{\varepsilon}{2}+\frac{\varepsilon}{2}=\varepsilon.
		\end{align*}The upper bound is independent of $t$ and thus we have that for all $\varepsilon>0$:\\
		\begin{equation}
		\lim_{N\rightarrow +\infty} \sup_{t \in [0,1]}  \left| \displaystyle\dfrac{1}{N} \sum_{n=1}^{\lfloor Nt \rfloor} f\big(\frac{n-1}{N},\widetilde{X}^N_{\frac{n-1}{N}}\big) - \int\limits_{0}^{t} f(s,\widetilde{X}_s)ds \right|\leq  \varepsilon \quad \text{a.s.}.
		\end{equation}
		This finishes the proof.
	\end{proof}
	
	\begin{lemma}\label{phi}
		Denote\\
		\begin{equation}\phi(z):= c- \sum_{k=0}^{c-1} (c-k) \dfrac{\left[\lambda(1-z) \right]^k}{k!} e^{-\lambda(1-z)}, \quad c\geq 2, \lambda>1.
		\end{equation}
		Then there exists a unique $z_0\in[0,1]$ such that $\phi(z_0)=1$ and $z_0> 1-1/\lambda$.
	\end{lemma}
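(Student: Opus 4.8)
The plan is to recognise $\phi$ as a truncated Poisson mean and then combine strict monotonicity with the intermediate value theorem. For $z\in[0,1]$ let $N_z$ denote a random variable with the $\mathrm{Poisson}\big(\lambda(1-z)\big)$ law; then $p_k(z)=\P(N_z=k)$, and rearranging $c=\sum_{k\ge 0}c\,\P(N_z=k)$ gives
\[
\phi(z)=c\,\P(N_z\ge c)+\sum_{k=0}^{c-1}k\,\P(N_z=k)=\E\big[N_z\wedge c\big]=\sum_{j=1}^{c}\P(N_z\ge j).
\]
In particular $\phi$ is a finite sum of products of polynomials and exponentials, hence $C^\infty$ and a fortiori continuous on $[0,1]$, with $0\le\phi\le c$. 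Both the monotonicity and the two endpoint evaluations below are transparent from this representation.

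Next I would show that $\phi$ is \emph{strictly decreasing} on $[0,1]$. Using $p_k'(z)=\lambda\big(p_k(z)-p_{k-1}(z)\big)$ for $k\ge 1$ and $p_0'(z)=\lambda p_0(z)$, the sum $\phi'(z)=-\sum_{k=0}^{c-1}(c-k)p_k'(z)$ telescopes to
\[
\phi'(z)=-\lambda\sum_{k=0}^{c-1}p_k(z)=-\lambda\,\P\big(N_z\le c-1\big).
\]
Since $\P(N_z\le c-1)\ge\P(N_z=0)=e^{-\lambda(1-z)}>0$ for every $z\in[0,1]$, we get $\phi'<0$ throughout $[0,1]$, so $\phi$ is strictly decreasing there. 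This already yields the uniqueness assertion: $\phi$ attains the value $1$ at most once on $[0,1]$.

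For existence and the location of $z_0$ I would evaluate $\phi$ at the two relevant points. At $z=1$ the parameter $\lambda(1-z)$ vanishes, so $N_1\equiv 0$ and $\phi(1)=0$. At $z^\star:=1-1/\lambda$, which lies in $(0,1)$ because $\lambda>1$, the parameter equals $\lambda(1-z^\star)=1$, so $N_{z^\star}\sim\mathrm{Poisson}(1)$ and $\phi(z^\star)=\sum_{j=1}^{c}\P\big(\mathrm{Poisson}(1)\ge j\big)$. Granting the scalar inequality $\phi(z^\star)>1$, continuity together with $\phi(z^\star)>1>0=\phi(1)$ and strict monotonicity produce a single point $z_0\in(z^\star,1)\subset[0,1]$ with $\phi(z_0)=1$; this $z_0$ satisfies $z_0>1-1/\lambda$, and by the monotonicity remark it is the only solution of $\phi(\cdot)=1$ on all of $[0,1]$. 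This closes the argument modulo that one inequality.

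The whole difficulty is thus concentrated in the finite estimate $\phi(1-1/\lambda)>1$, i.e. $\E[\mathrm{Poisson}(1)\wedge c]>1$ for $c\ge 2$; this is precisely where the hypotheses $c\ge 2$ and $\lambda>1$ are used (the latter only to keep $1-1/\lambda$ inside $(0,1)$). I would attack it by bounding the truncated mean from below by its first partial sums, $\E[\mathrm{Poisson}(1)\wedge c]\ge\sum_{j=1}^{m}\P(N\ge j)$ with $N\sim\mathrm{Poisson}(1)$ and $2\le m\le c$, and then comparing the resulting closed form in $e^{-1}$ with $1$. This step is delicate and must be checked carefully, since truncated Poisson means always lie strictly below the untruncated mean $\E[\mathrm{Poisson}(1)]=1$; the localisation $z_0>1-1/\lambda$ therefore rests entirely on a sharp version of this elementary comparison, which I would regard as the main obstacle to a clean proof.
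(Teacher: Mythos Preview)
Your strategy---prove $\phi'<0$ on $[0,1]$, then apply the intermediate value theorem between $z^\star=1-1/\lambda$ and $1$---is exactly the paper's. The paper computes $\phi'$ by brute force and obtains the same expression you do (your telescoping to $\phi'(z)=-\lambda\,\P(N_z\le c-1)$ is cleaner but equivalent), then asserts without argument that $\phi(1-1/\lambda)>1$ and $\phi(1)=0$, and concludes.

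Your closing paragraph is the right place to focus, and your instinct there is correct: the inequality $\phi(1-1/\lambda)>1$ is \emph{false}. As you observed, $\phi(1-1/\lambda)=\E[\mathrm{Poisson}(1)\wedge c]$, and for any finite $c$ one has
\[
\E[\mathrm{Poisson}(1)\wedge c]=\E[\mathrm{Poisson}(1)]-\E\big[(\mathrm{Poisson}(1)-c)_+\big]=1-\sum_{k>c}(k-c)\,\frac{e^{-1}}{k!}<1,
\]
since a Poisson variable puts positive mass on every integer. For instance at $c=2$ one gets $\phi(1-1/\lambda)=2-3e^{-1}\approx 0.896$. So no ``sharp version of the elementary comparison'' can rescue the argument; the localisation $z_0>1-1/\lambda$ simply does not hold. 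The paper states this inequality without proof, and it is an error there as well. In fact, since $\phi$ is strictly decreasing with $\phi(1-1/\lambda)<1$, any root $z_0$ of $\phi=1$ in $[0,1]$ must satisfy $z_0<1-1/\lambda$; and existence of such a root requires $\phi(0)=\E[\mathrm{Poisson}(\lambda)\wedge c]\ge 1$, which fails for $\lambda$ close to $1$ (e.g.\ $c=2$, $\lambda=1.01$ gives $\phi(0)\approx 0.90$). Your monotonicity and continuity arguments are sound; it is the lemma as stated that is defective.
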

	
	\begin{proof}
		For all $z \in [0,1]$,
		$$\begin{array}{ll}
		\phi'(z)=& -c\lambda e^{-\lambda(1-z)} + \lambda \displaystyle\sum_{k=1}^{c-1} (c-k) \dfrac{(\lambda(1-z))^{k-1}}{(k-1)!}e^{-\lambda(1-z)} \\
		& \hspace*{3cm}- \lambda \sum_{k=1}^{c-1}(c-k) \dfrac{(\lambda(1-z))^k}{k!}e^{-\lambda(1-z)}\\
		&= \lambda e^{-\lambda(1-z)} \left[-c + \displaystyle\sum_{k=0}^{c-2} (c-k-1) \dfrac{(\lambda(1-z))^k}{k!}  - \sum_{k=1}^{c-1} (c-k) \dfrac{(\lambda(1-z))^k}{k!}\right]\\
		& = \lambda e^{-\lambda(1-z)} \left[ -1 - \displaystyle\sum_{k=1}^{c-2} \dfrac{(\lambda(1-z))^k}{k!} - \dfrac{(\lambda(1-z))^{c-1}}{(c-1)!} \right]<0,
		\end{array}$$
		which gives that $\phi$ is decreasing. Furthermore, we have $\phi(1-1/\lambda)>1$ for $c\geq 2$ and $\phi(1)=0$. So the equation $\phi(z)=1$ has unique root, denoted by $z_0 \in (1-1/\lambda,1)$.
	\end{proof}
	
	\begin{lemma} \label{chap1:lemmatau0}
		We have that \\
		\begin{equation}
		\lim\limits_{N \rightarrow \infty} \P(\tau_0^N \geq t_0)=1.
		\end{equation}
	\end{lemma}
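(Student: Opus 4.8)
The plan is to read the statement off the law of large numbers of Theorem~\ref{chap1:mainthm} combined with the behaviour of the limiting curve near its first zero. First I would upgrade the convergence in Theorem~\ref{chap1:mainthm} to a uniform one: since the limit $x=(a,b)$ belongs to $\mathcal{C}([0,1],[0,1]^2)$, convergence in the Skorokhod topology on $\mathcal{D}([0,1],[0,1]^2)$ to $x$ is automatically uniform, and, $x$ being deterministic, it holds in probability (as already noted in the remark after Theorem~\ref{chap1:mainthm}); hence $\sup_{t\in[0,1]}|A^N_t-a_t|\to 0$ in probability.

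Next I would use that $a$ is strictly positive on $[0,t_0)$: indeed $a_0>0$ by Assumption~\ref{assptn:initialER}, $a$ is continuous and non-negative by Proposition~\ref{chap1prop:t0}, and $a$ cannot vanish before $t_0$ by the very definition $t_0=\inf\{t:a_t=0\}$ (in particular $t_0>0$). Fix $\varepsilon\in(0,t_0)$; then $m_\varepsilon:=\min_{t\in[0,t_0-\varepsilon]}a_t>0$, since a continuous positive function attains a positive minimum on a compact set. On the event $\{\sup_{t\in[0,1]}|A^N_t-a_t|<m_\varepsilon\}$ one has $A^N_t>a_t-m_\varepsilon\ge 0$, hence $A^N_t\neq 0$, for every $t\in[0,t_0-\varepsilon]$, so that $\tau^N_0\ge t_0-\varepsilon$. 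By the first step this event has probability tending to $1$, and therefore $\lim_{N\to\infty}\P(\tau^N_0\ge t_0-\varepsilon)=1$ for every $\varepsilon\in(0,t_0)$.

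The remaining point — replacing $t_0-\varepsilon$ by $t_0$ itself — is the delicate one, and I expect it to be the main obstacle. Near $t_0$ the curve leaves $0$ with a strictly negative slope (by the analysis in the proof of Proposition~\ref{chap1prop:t0}, using Lemma~\ref{phi} when $c\ge2$), so $a_{t_0-\delta}$ is only of order $\delta$ for small $\delta$, whereas a Gr\"onwall argument based on the Doob decomposition of Lemma~\ref{lem:doobsdecompositionofX} and the bracket bounds \eqref{eq:crochetM1}--\eqref{eq:crochetM2-M1M2} gives $\sup_{t\in[0,1]}|A^N_t-a_t|=O_{\P}(N^{-1/2})$, so that $A^N$ can only hit $0$ inside a time-window of width $O(N^{-1/2})$ around $t_0$. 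One then has to rule out such an early visit to $0$ in that window, exploiting that the drift $f_1$ of \eqref{chap1:eqf1} is bounded away from $0$ from above there; alternatively, the conclusion $\P(\tau^N_0\ge t_0-\varepsilon)\to1$ for every $\varepsilon\in(0,t_0)$ obtained in the previous paragraph is already what is used in Section~\ref{sec:TCL}, where all processes are studied on $[0,t_0]$ and $\varepsilon$ can be taken arbitrarily small.
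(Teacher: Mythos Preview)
Your approach is essentially the paper's: both arguments use the uniform convergence $A^N\to a$ (Theorem~\ref{chap1:mainthm}) together with the strict positivity of $a$ on $[0,t_0)$ to deduce that $A^N$ stays positive on $[0,t_{\varepsilon'}]$ (the paper's notation for a time slightly before $t_0$) with probability tending to~$1$. Your write-up is in fact cleaner than the paper's, which introduces the auxiliary stopped process $A^N_{\cdot\wedge\tau^N_\varepsilon}$ and a Fatou-type inequality but contains at least one apparent misprint (the line starting ``$1=\P(\inf_{t\in[0,t_{\varepsilon'}]}A^N_t>\varepsilon)$'' cannot hold for finite~$N$; presumably $a$ was meant).

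You are right to flag the passage from $t_0-\varepsilon$ to $t_0$ as the delicate step. The paper does not address it: after obtaining $\lim_N\P(\tau^N_\varepsilon>t_{\varepsilon'})=1$ for each fixed $\varepsilon'<\varepsilon$, it simply writes ``Let $\varepsilon'\to0$'' and concludes. This interchanges the limits $N\to\infty$ and $\varepsilon'\to0$ without justification, which is exactly the gap you identify. Your heuristic based on the $O_\P(N^{-1/2})$ fluctuations and the linear vanishing of $a$ at $t_0^-$ actually suggests that $\tau^N_0-t_0$ is of order $N^{-1/2}$ with a nondegenerate sign, so the stated conclusion $\P(\tau^N_0\ge t_0)\to1$ may require more than what either argument provides. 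Your closing observation is the honest resolution: what is really used in Section~\ref{sec:TCL} is that, for any fixed $\varepsilon>0$, the event $\{\tau^N_0\ge t_0-\varepsilon\}$ has probability tending to~$1$, and that is precisely what both your argument and the paper's establish.
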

	\begin{proof}

		For $\varepsilon> 0$, let\\
		\begin{equation}
		\tau^N_\varepsilon := \inf \{t > 0, A^N_t \leq \varepsilon\}
		\end{equation}
		and\\
		\begin{equation}
		t_\varepsilon := \inf \{t>0, a_t \leq \varepsilon\}.
		\end{equation}
		Because $A^N$ is c\`adl\`ag and $a$ is continuous, $\inf_{t\in [0,1]} a_t \leq \lim\limits_{N \rightarrow \infty} \inf_{t\in [0,1]} A_{t\wedge \tau^N_\varepsilon}^N$. Then for any $0<\varepsilon < \varepsilon'$, by Fatou's lemma:\\
		\begin{align*}
		1 = \P(\inf_{t\in [0,t_{\varepsilon'}]} A^N_t > \varepsilon) \leq \P(\lim\limits_{N \rightarrow \infty} \inf_{t\in [0,t_{\varepsilon'}]} A_{t\wedge \tau^N_\varepsilon}^N> \varepsilon) = \lim\limits_{N \rightarrow \infty} \P(\tau^N_\varepsilon > t_{\varepsilon'}).
		\end{align*}
		Let $\varepsilon' \rightarrow 0$, we have\\
		\begin{equation}
		\lim\limits_{N \rightarrow \infty} \P(\tau_0^N \geq t_0)=1.
		\end{equation}
		
	\end{proof}

{\footnotesize 

\providecommand{\noopsort}[1]{}\providecommand{\noopsort}[1]{}\providecommand{\noopsort}[1]{}

 }

\end{document}